\newtheorem{thm}[subsection]{Theorem}
\newtheorem{cor}[subsection]{Corollary}
\newtheorem{lem}[subsection]{Lemma}
\newtheorem{prop}[subsection]{Proposition}
\newcounter{remarkscounter}
\newenvironment{remarks}
{\begin{list}{{\rm(\arabic{remarkscounter})} }{\usecounter{remarkscounter}

\setlength{\labelsep}{\fill} \setlength{\leftmargin}{0pt}
\setlength{\itemindent}{\fill}
\setlength{\labelwidth}{\fill} \setlength{\listparindent}{0pt}}}
{\end{list}}
\renewcommand\section{\@startsection {section}{1}{\z@}%
                                   {-3.25ex \@plus -1ex \@minus -.2ex}%
                                   {1.8ex \@plus.2ex}%
                                   {\centering\normalfont\normalsize\bfseries}}%
\renewcommand\subsection{\@startsection {subsection}{1}{\z@}%
                                   {-2.0ex \@plus -1ex \@minus -.2ex}%
                                   {-1.8ex \@plus.2ex}%
                                   {\normalfont\normalsize\bfseries}}%
\renewcommand\mod[1]{\ (\mathop{\rm mod}#1)}
\theoremstyle{definition}
\newcommand{\quash}[1]{}
\numberwithin{equation}{subsection}
\newcommand{\image}{\mathop{\rm Image}}
\newcommand{\Cech}{\v{C}ech\ }
\renewcommand{\H}{\check{H}}
\newcommand{\Cr}{{\sf cr}}
\newcommand{\LL}{\left\{ \L,\L \right\}}
\newcommand{\RZ}{\mathbb R/2\mathbb Z}
\newcommand{\bs}{\boldsymbol{\sim}}
\newcommand{\lp}{\left(} \newcommand{\rp}{\right)}
\newcommand{\grad}{\mathop{grad}}
\newcommand{\A}{\mathcal A}
\newcommand{\F}{\L \times_M \L}
\newcommand{\ev}{{\mathbf{ev}}}
\renewcommand{\L}{\Lambda}
\newcommand{\half}{\frac{1}{2}}
\newcommand{\M}{\mathcal M}
\newcommand{\Z}{\mathbb Z}
\newcommand{\rh}{\theta_{\half\to s}}
\newcommand{\hr}{\theta_{s\to\half}}
\renewcommand{\r}{\frac{r}{r+1}}
\newcommand{\ov}[1]{\overline{#1}}
\newcommand{\Ty}{\Omega}
\newcommand{\scs}{{\tiny{\heartsuit}}}
\begin{document}

\newarrow{Equals} =====
\newarrow{Implies} ===={=>}
\newarrow{Onto} ----{>>}
\newarrow{Into} C--->
\newarrow{Dotsto} ....>

\setcounter{tocdepth}{1}
\title{Loop products and closed geodesics}
\author{Mark Goresky${}^1$}\thanks{1.  School of Mathematics, Institute for Advanced Study,
Princeton N.J.  Research partially supported by DARPA grant \# HR0011-04-1-0031.}
\author{Nancy Hingston${}^2$}\thanks{2.  Dept. of Mathematics, College of New Jersey,
Ewing N.J.}
\keywords{Chas-Sullivan product, loop product, free loop space, Morse theory, energy}
\subjclass{}

\begin{abstract}
{\tiny The critical points of the length function on the free loop space
$\L(M)$ of a compact Riemannian manifold $M$ are the closed geodesics on $M.$  The
length function gives a filtration of the homology of $\L(M)$ and we show that
the Chas-Sullivan product
\begin{diagram}[size=2em]
H_i(\L) \times H_j(\L) &\rTo^{*}& H_{i+j-n}(\L)
\end{diagram}
is compatible with this filtration.  We obtain a very simple expression for the
associated graded homology ring $Gr H_*(\L(M))$ when all geodesics are closed, or
when all geodesics are nondegenerate.  We also construct a new
but related cohomology product
\begin{diagram}[size=2em]
H^i(\L,\L_0) \times H^j(\L,\L_0) &\rTo^{\circledast}& H^{i+j+n-1}(\L,\L_0)
\end{diagram}
(where $\L_0 = M$ is the constant loops), also compatible with the length
filtration, and we obtain a similar expression for the ring $Gr H^*(\L,\L_0)$
in these two cases.  The non-vanishing of products $\sigma^{*n} \in H_*(\L)$
and $\tau^{\circledast n} \in H^*(\L,\L_0)$ is shown to be related R. Bott's
analysis of the rate at which the Morse index grows when a geodesic is iterated.}
\end{abstract}
\maketitle
\vspace{-.6in}
{\footnotesize\tableofcontents}   
\section{Introduction}
\subsection{}
Let $M$ be a smooth compact manifold without boundary.
In \cite{CS}, M. Chas and D. Sullivan constructed a new product structure
\begin{equation}\label{eqn-product-intro}
\begin{diagram}[size=2em]
H_i(\L) \times H_j(\L) &\rTo^{*}& H_{i+j-n}(\L)
\end{diagram}\end{equation}
on the homology $H_*(\L)$ of the free loop space $\L$ of $M.$  In
\cite{Cohen} it was shown that this product is a homotopy invariant of the
underlying manifold $M.$  In contrast, the closed
geodesics on $M$ depend on the choice of a Riemannian
metric, which we now fix.  In this paper we investigate the interaction
beween the Chas-Sullivan product on $\L$ and the energy function, or 
rather, its square root,
\[ F(\alpha) = \sqrt{E(\alpha)} = \left( \int_0^1|\alpha'(t)|^2dt \right)^{1/2},\]
whose critical points are exactly the closed geodesics.
For any $a$, $0 \le a \le \infty$ we denote by
\begin{equation}\label{eqn-notation}
 \Lambda^{\le a},\ \Lambda^{>a}, \Lambda^{=a}, \Lambda^{(a,b]}\end{equation}
those loops $\alpha \in \Lambda$ such that $F(\alpha) \le a,$
$F(\alpha) > a$, $F(\alpha) = a$, $a < F(\alpha) \le b,$ etc.  (When
$a = \infty$ we set $\L^{<a} = \L^{\le a} = \L$.) 
In this paper we will use homology $H_*(\L^{\le a};G)$
with coefficients in the ring $G = \Z$ if $M$ is orientable and $G = \Z/(2)$
otherwise.   In \S \ref{sec-CS-product} we prove the following.
\begin{thm}  The Chas-Sullivan product extends to a family of products\footnote{
Here, $\H_i(\L^{\le a})$ denotes \Cech homology.  In Lemma \ref{lem-cech} we
show that the singular and \Cech homology agree if $0 \le a \le \infty$ is a
regular value or if it is a nondegenerate critical value in the sense of Bott.}
\begin{align}
\H_i(\L^{\le a}) \times \H_j(\L^{\le b}) &\overset{*}{\longrightarrow}
\H_{i+j-n}(\L^{\le a+b})\notag \\
\H_i(\L^{\le a},\L^{\le a'}) \times \H_j(\L^{\le b}, \L^{\le b'})
&\overset{*}{\longrightarrow}
\H_{i+j-n}(\L^{\le a+b}, \L^{\le \max(a+b',a'+b)})\notag \\
\H_i(\L^{\le a},\L^{<a}) \times \H_j(\L^{\le b},\L^{<b})
&\overset{*}{\longrightarrow} \H_{i+j-n}(\L^{\le a+b}, \L^{<a+b})
\label{eqn-level-homology-product}
\end{align}
whenever $ 0 \le a' < a \le \infty$ and $0 \le b' < b \le \infty.$
These products are compatible with respect to the natural inclusions
$\L^{\le c'} \to \L^{\le c}$ whenever $c' \le c.$\end{thm}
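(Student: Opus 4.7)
My plan is to imitate the classical Chas--Sullivan construction, but with a concatenation map that is subadditive with respect to $F$.

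Step 1. Build a filtered concatenation $m\colon \F \to \L$ satisfying $F(m(\alpha,\beta))\le F(\alpha)+F(\beta)$. The naive ``run each loop at double speed'' concatenation doubles the energy and is useless here. Instead, first reparametrize $\alpha$ and $\beta$ to have constant speed, then concatenate with the time interval split in the ratio $L(\alpha):L(\beta)$, where $L$ denotes arc length. A direct computation shows $m(\alpha,\beta)$ is itself constant-speed with
\[
F(m(\alpha,\beta)) \;=\; L(\alpha)+L(\beta)\;\le\; F(\alpha)+F(\beta),
\]
the last inequality being Cauchy--Schwarz. The degenerate cases $L(\alpha)=0$ or $L(\beta)=0$ are handled as limits; the resulting $m$ is continuous and is homotopic to the standard concatenation, so at $a=b=\infty$ we recover the ordinary Chas--Sullivan product.

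Step 2. Given $x\in \H_i(\L^{\le a})$ and $y\in \H_j(\L^{\le b})$, take the cross product $x\times y\in \H_{i+j}(\L^{\le a}\times \L^{\le b})$. The codimension-$n$ inclusion $\L^{\le a}\times_M \L^{\le b}\hookrightarrow \L^{\le a}\times \L^{\le b}$ has normal bundle the pullback of $TM$ along the basepoint evaluation $\ev$, so its Thom class produces an umkehr map
\[
\H_{i+j}(\L^{\le a}\times \L^{\le b}) \longrightarrow \H_{i+j-n}(\L^{\le a}\times_M \L^{\le b}).
\]
Postcomposing with $m_*$ lands in $\H_{i+j-n}(\L^{\le a+b})$ by Step 1, giving the absolute product.

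Step 3. The relative products follow the same recipe applied to relative cross products. For the middle product, subadditivity gives
\[
m\bigl((\L^{\le a'}\times_M \L^{\le b})\cup (\L^{\le a}\times_M \L^{\le b'})\bigr) \subset \L^{\le \max(a'+b,\,a+b')}.
\]
For the sharp version, $F(\alpha)<a$ or $F(\beta)<b$ gives $F(m(\alpha,\beta))<a+b$ directly from the same inequality. Naturality under the inclusions $\L^{\le c'}\hookrightarrow \L^{\le c}$ is automatic from functoriality of the cross product, the umkehr map (the normal bundle is unchanged by restriction), and $m_*$.

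The main obstacle is making the Thom-class/umkehr map rigorous in \Cech homology at arbitrary critical levels of $F$. At a regular or Bott-nondegenerate value, $\L^{\le a}$ is sufficiently nice that ordinary singular homology suffices, and the umkehr map is the standard one (by Lemma~\ref{lem-cech}); at an arbitrary critical value one instead constructs the operations on the open sets $\L^{<a+\ve}$ and passes to the inverse limit as $\ve\to 0$, exploiting the continuity of \Cech homology. The technical point is verifying that the Thom class, the umkehr map, and $m_*$ all commute with this inverse limit, so that the construction descends correctly to \Cech homology on the filtered pieces.
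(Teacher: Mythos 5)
Your proposal is correct and takes essentially the same route as the paper's own proof: an energy-subadditive concatenation map, the relative Thom/Gysin (umkehr) map for the figure-eight space inside the Hilbert manifold of products of \emph{open} sublevel sets, and then passage to \Cech homology on the closed sublevel sets by inverse and direct limits as in Lemma \ref{lem-approximation}. The only cosmetic difference is the gluing map: the paper's $\phi_{\min}$ joins the two loops at the energy-minimizing time $s=F(\alpha)/(F(\alpha)+F(\beta))$ and achieves exact additivity $F(\phi_{\min}(\alpha,\beta))=F(\alpha)+F(\beta)$ without reparametrizing, while your $m$ first passes to constant-speed loops and only uses the inequality $L(\alpha)+L(\beta)\le F(\alpha)+F(\beta)$, which suffices for this theorem.
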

\noindent
We refer to $\H_i(\L^{\le a}, \L^{<a})$ as the {\em level homology} group, or
the {\em homology at level $a$}, with its associated {\em level homology product} 
(\ref{eqn-level-homology-product}).
It is zero unless $a$ is a critical value of $F.$

\subsection{}
In \S \ref{sec-cohomology-products} we consider analogous products in the
cohomology of the free loop space.  (We discuss the cup product briefly in 
\S \ref{subsec-cup}.)  It is possible to mimic the construction of
the Chas-Sullivan product, word for word, in cohomology, but this results in a
trivial product, cf. \S \ref{subsec-first-cohomology-product}.  
However by utilizing a certain one parameter family of reparametrizations, 
it is possible to construct a nontrivial product in cohomology.
\begin{thm}   Let $0 \le a' < a \le \infty$ and $0 \le b' <b < \infty.$
There is a family of products
\begin{align}
H^i(\L,\L_0) \times H^j(\L,\L_0) &\overset{\circledast}{\longrightarrow}
H^{i+j+n-1}(\L,\L_0)\label{eqn-level-cohomology1} \\
\H^i(\L^{\le a}, \L^{\le a'}) \times \H^j(\L^{\le b}, \L^{\le b'})
&\overset{\circledast}{\longrightarrow} \H^{i+j+n-1}(\L^{\le \min(a+b',a'+b)},
\L^{\le a'+b'})\notag \\
\H^i(\L^{\le a}, \L^{<a}) \times \H^j(\L^{\le b}, \L^{<b})
&\overset{\circledast}{\longrightarrow} \H^{i+j+n-1}(\L^{\le a+b}, \L^{<a+b})
\label{eqn-level-cohomology-product}
\end{align}
which are associative and (sign-)commutative, and are compatible with the 
homomorphisms induced by the inclusions $\L^{\le c'}
\to \L^{\le c}$ whenever $c'<c.$ The product (\ref{eqn-level-cohomology1}) is
independent of the Riemannian metric.\end{thm}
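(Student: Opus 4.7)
The plan is to follow the strategy indicated just before the statement: a direct cohomological mimic of the Chas-Sullivan construction yields a trivial product, so a non-trivial product of degree $+n-1$ is obtained by inserting a one-parameter family of reparametrized concatenations. The central object is the splicing map $\Phi\colon \F\times I\to\L$, where $I=(0,1)$ and
\[ \Phi(\alpha,\beta,s)(t)=\begin{cases}\alpha(t/s), & 0\le t\le s,\\ \beta\bigl((t-s)/(1-s)\bigr), & s\le t\le 1.\end{cases} \]
A direct computation gives $F\bigl(\Phi(\alpha,\beta,s)\bigr)^2=F(\alpha)^2/s+F(\beta)^2/(1-s)$, a strictly convex function of $s$ whose minimum $(F(\alpha)+F(\beta))^2$ is attained at $s_{*}=F(\alpha)/(F(\alpha)+F(\beta))$.

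I would then define $x\circledast y$ as the composition (with $A=\L_0\times\L\cup\L\times\L_0$)
\begin{align*}
H^i(\L,\L_0)\otimes H^j(\L,\L_0) &\xrightarrow{\times} H^{i+j}(\L\times\L,A)\\
 &\xrightarrow{\iota^*} H^{i+j}(\F,\F\cap A)\\
 &\xrightarrow{p^*} H^{i+j}\bigl(\F\times I,(\F\cap A)\times I\cup \F\times\partial I\bigr)\\
 &\xrightarrow{\Phi_{!}} H^{i+j+n-1}(\L,\L_0),
\end{align*}
where $\iota\colon \F\hookrightarrow \L\times\L$ is the codimension-$n$ inclusion, $p$ is the projection, and $\Phi_{!}$ is a Gysin-Umkehr map of total degree $+n-1$. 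It factors as Thom-class multiplication associated to the codimension-$n$ subspace $\widetilde\F=\{(\gamma,s)\in\L\times I:\gamma(s)=\gamma(0)\}\subset \L\times I$ (normal bundle pulled back from $TM$ via basepoint evaluation, shifting degree by $+n$), followed by integration along the oriented $I$-fiber (shifting by $-1$).

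The filtration statements follow from a support analysis using the convex energy formula. The lower bound $\L^{\le a'+b'}$ on the relative subspace arises because $F(\alpha *_s\beta)\le a'+b'$ at optimal $s$ would force $F(\alpha)+F(\beta)\le a'+b'$, hence both $F(\alpha)\le a'$ and $F(\beta)\le b'$, where $x\times y$ vanishes. The upper bound $\L^{\le\min(a+b',a'+b)}$ arises from two independent extensions: if $F(\beta)\le b'$ then $F(\alpha *_{s_{*}}\beta)\le F(\alpha)+F(\beta)\le a+b'$, and similarly $\le a'+b$ when $F(\alpha)\le a'$; since $x\times y$ vanishes on the union of these two subspaces, the product extends through each independently and the tightest bound on its support is the minimum. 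The level product (\ref{eqn-level-cohomology-product}) follows by specialization. For the absolute product (\ref{eqn-level-cohomology1}), the construction depends only on the topological structure of $\F\subset\L^2$ and its finite-rank normal bundle, so two Riemannian metrics yield homotopic splicing maps via linear interpolation and the same induced operation in $H^*(\L,\L_0)$.

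The main obstacles I anticipate are (i) the rigorous construction of $\Phi_{!}$ in the infinite-dimensional Hilbert-manifold setting, which requires a tubular neighborhood for $\widetilde\F\subset \L\times I$ together with a careful treatment of the $I$-integration on relative cohomology (the finite rank of the normal bundle, combined with the Hilbert structure on $\L$, should reduce this to a standard argument); and (ii) verifying associativity and sign-commutativity, which requires an analogous triple-splicing moduli $\F_{3}\times I^{2}$ parametrizing iterated splicings $(\alpha_{1}*_{s_{1}}\alpha_{2})*_{s_{2}}\alpha_{3}$ and a chain-level homotopy through this $2$-parameter family, in parallel to the analogous verification for the Chas-Sullivan homology product.
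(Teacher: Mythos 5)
Your construction is in essence the paper's own (cross product, restriction to $\F$, a codimension-$n$ Thom class for the family of glued loops, integration over the gluing parameter), but the filtered part of your argument has a genuine gap. The product class is not localized at the energy-minimizing time $s_*$: it is obtained by pulling back over an entire interval of gluing parameters (in the paper, $I'=\left[\tfrac{a'}{a'+b},\tfrac{a}{a+b'}\right]$), so the relevant inclusion of pairs must be checked for every $s$ in that interval, not only at $s_*$. Moreover the inference ``$F(\alpha)+F(\beta)\le a'+b'$, hence both $F(\alpha)\le a'$ and $F(\beta)\le b'$'' is false (take $F(\alpha)=a'+\epsilon$, $F(\beta)=b'-\epsilon$); what is true, and what suffices, is the disjunction --- one cannot have both $F(\alpha)>a'$ and $F(\beta)>b'$ --- because the cross product class is relative to the union $\L^{\le a'}\times\L^{\le b}\cup\L^{\le a}\times\L^{\le b'}$. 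More seriously, for a general $H^1$ loop the energies of the two pieces cut at time $s$ are not determined by $(F(\gamma),s)$ (only $F(\alpha)^2/s+F(\beta)^2/(1-s)=F(\gamma)^2$ holds), so verifying that for every $s\in I'$ the two pieces land either where $x\times y$ is defined or where it vanishes is exactly the delicate point. The paper handles this by transporting the construction to the space $\A$ of loops parametrized proportionally to arclength, where the cut at $s$ splits the length exactly as $(sL,(1-s)L)$, doing the region analysis in the $(s,L)$-plane there, and then passing to the closed sublevels $\L^{\le a}$ by inverse and direct limits --- which is also where the \Cech groups $\H$ in the statement come from; this limiting step is absent from your sketch.

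The second gap is your Gysin map $\Phi_!$: you want a tubular neighborhood of $\widetilde{\F}=\{(\gamma,s):\gamma(s)=\gamma(0)\}$ in $\L\times I$ coming from the map $(\gamma,s)\mapsto(\gamma(0),\gamma(s))$, but this map is not smooth on the $H^1$ loop space (its $s$-derivative is $\gamma'(s)$, which is only $L^2$), so the Hilbert-manifold submersion and tubular neighborhood argument does not apply directly. The paper sidesteps this: it takes the relative Thom class of the genuine submanifold $\phi_{\half}(\F)\subset\L$ --- the preimage of the diagonal under the smooth map $(\ev_0,\ev_{\half})$ --- and pulls it back along the merely continuous map $J(\gamma,s)=\gamma\circ\theta_{\half\to s}$; for the filtered products it again works in $\A$, where $J_{\A}$ is essentially a homeomorphism onto $\A_{\half}$, and obtains the Thom isomorphism there by comparison with an honest Hilbert manifold. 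Once routed this way your operation coincides with the paper's. Your plan for associativity (a two-parameter family of triple splicings and a homotopy, which the paper carries out via a convexity argument on reparametrizations) matches the appendix, but commutativity is not a byproduct of it: the paper needs the additional family $J_r$ rotating the basepoint half-way around the loop, mirroring the $S^1$-action trick used for the Chas-Sullivan product.
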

The same construction gives (cf. \S \ref{subsec-based}) a (possibly noncommutative)
product $\circledast$ on the cohomology of the based loop space $\Omega$ such
that $h^*(a\circledast b) = h^*(a) \circledast h^*(b)$ where $a,b\in H^*(\L)$
and $h:\Omega \to \L$ denotes the inclusion.  In \S 
\ref{sec-based-loop-coproduct-nontrivial} we calculate some
non-zero examples of this product.

\subsection{}
If the ring $\left(H^*(\L,\L_0),\circledast\right)$ is finitely generated then the 
existence of the product $\circledast$ is already enough
to answer a question of Y. Eliashberg, cf. \S \ref{subsec-Eliashberg}:  the maximal
degree of an ``essential'' homology class of level $\le t$ can grow at most linearly
with $t.$

\subsection{}  There is a well-known isomorphism between the Floer homology
of the cotangent bundle of $M$ and the homology of the free loop space of $M$,
which transforms the pair-of-pants product into the Chas-Sullivan product
on homology, see \cite{AS1, AS2, SW, Viterbo,CHV}.  The cohomology product described
above should therefore correspond to some geometrically defined product on the
Floer {\em cohomology}; it would be interesting to see an explicit construction of
this product. (The obvious candidate would be some 1-parameter variation of the 
coproduct on homology given by the upside-down pair of pants.)  It would also be
interesting to compare the cohomology product described above with the coproduct
in homology that is outlined in \cite{Sullivan}. (For odd dimensional spheres the
product in \cite{Sullivan} is zero while the $\circledast$ product is non-zero.) 

\subsection{}
The {\em critical value} (see \S \ref{sec-cr-def}) of a homology class 
$0\ne\eta \in H_i(\L)$ is defined to be
\begin{equation}\label{eqn-intro-cr}
 \Cr(\eta) = \inf \left\{ a \in \mathbb R:\ \eta \text{ is supported on }
\L^{\le a} \right\}.\end{equation}
The critical value of a cohomology class
$0\ne \alpha \in H^*(\L,\L_0)$ is defined to be
\[ \Cr(\alpha) = \sup \left\{ a \in \mathbb R:\ \alpha \text{ is supported on }
\L^{\ge a} \right\}.\]
(These are necessarily critical values of $F$.) 
In Proposition \ref{prop-product-extended} and Proposition 
\ref{prop-coprod-critical} we show 
that the products  $*$ and $\circledast$ satisfy the following relations:
\begin{align*}
\Cr(\alpha*\beta) &\le \Cr(\alpha) + \Cr(\beta)\ \text{ for all } \alpha,\beta
\in H_*(\L)\\
\Cr(\alpha \circledast \beta) &\ge \Cr(\alpha) + \Cr(\beta)\ \text{ for all }
\alpha,\beta \in H^*(\L,\L_0).
\end{align*}

\subsection{}
A homology class $\eta \in H_*(\L)$ is said to be {\em level nilpotent}
if $\Cr(\eta^{*N}) < N\Cr(\eta)$ for some $N>1.$  A cohomology class
$\alpha\in H^*(\L,\L_0)$ is level nilpotent if 
$\Cr(\alpha^{\circledast N}) > N\Cr(\alpha)$
for some $N>1.$  There are analogous notions in level homology and cohomology:
A homology (resp. cohomology) class $\eta$ in $\H(\L^{\le a},\L^{<a})$ (where
$\H$ denotes homology, resp. cohomology)
is said to be level-nilpotent if some power vanishes: 
$\eta^{*N}=0$ (resp. $\eta^{\circledast N}=0$) in $\H(\L^{\le Na}, \L^{<Na}).$ 
In \S \ref{sec-nilpotence} and \S \ref{sec-nilpotent-cohomology} we prove:
\begin{thm} If all closed geodesics on $M$ are nondegenerate then every 
homology class in $H_*(\L),$ every cohomology class in $H^*(\L,\L_0),$ 
every level homology class and every level cohomology 
class\footnote{see previous footnote} in $H(\L^{\le a}, \L^{<a})$ is 
level-nilpotent (for all $a \in \mathbb R$).   \end{thm}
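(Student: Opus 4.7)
The plan is to reduce all four nilpotency statements to one: every class in the level homology $\H(\L^{\le a},\L^{<a})$, resp.\ level cohomology, has a power that vanishes. For instance, if $\eta\in H_*(\L)$ has $\Cr(\eta) = a$, then $\eta$ descends to a nonzero class $\bar\eta$ in $\H(\L^{\le a},\L^{<a})$; combining $\bar\eta^{*N} = 0$ with the critical-value bound $\Cr(\eta^{*N}) \le N\Cr(\eta)$ proved earlier forces $\eta^{*N}$ to be supported strictly below level $Na$, so $\Cr(\eta^{*N}) < N\Cr(\eta)$. The cohomology statement reduces dually, using $\Cr(\alpha^{\circledast N}) \ge N\Cr(\alpha)$.

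Under nondegeneracy, every critical submanifold of $F$ is an isolated $S^1$-orbit $S^1\cdot\gamma$, and only finitely many such orbits lie at each critical level. By the Morse Lemma quoted earlier (a version of Morse--Bott theory for $F$), the level homology splits as
\[
\H(\L^{\le a},\L^{<a}) \;\cong\; \bigoplus_{F(\gamma)=a} H_{*-\lambda(\gamma)}(S^1;G),
\]
so it is supported in the two degrees $\lambda(\gamma)$ and $\lambda(\gamma)+1$ as $\gamma$ ranges over the finitely many geodesics at level $a$; the analogous statement holds in level cohomology.

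The crux is a degree-versus-index comparison using Bott's iteration theorem: for a nondegenerate closed geodesic $\gamma$ in an $n$-manifold,
\[
\lambda(\gamma^N) = N\bar\lambda(\gamma) + O(1), \qquad \bar\lambda(\gamma)\in[0,n-1],
\]
with $|O(1)|\le n-1$. Localizing $\eta$ at a single $\gamma$, the product $\eta^{*N}$ is concentrated near $\gamma^N$ and lies in degree $N(k-n)+n$, where $k\in\{\lambda(\gamma),\lambda(\gamma)+1\}$. For this to be nonzero in level homology at $Na$, the degree must match $\lambda(\gamma^N)$ or $\lambda(\gamma^N)+1$. Bott's bound $\lambda(\gamma)\le\bar\lambda(\gamma)+n-1$ gives $k-n\le\bar\lambda(\gamma)$, so the two linear-in-$N$ expressions grow with rates $k-n$ and $\bar\lambda(\gamma)$ respectively and their difference diverges to $-\infty$ unless $k = \bar\lambda(\gamma)+n$, the unique borderline. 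The cohomology case is simpler: $\alpha^{\circledast N}$ has degree $N(k+n-1)-(n-1)$ of slope $k+n-1\ge n>\bar\lambda$ for $k\ge 1$, so eventually exceeds $\lambda(\gamma^N)+1$.

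The main obstacle is the borderline case $k = \bar\lambda(\gamma)+n$ in homology, where the linear growth rates coincide; here one must invoke the finer Bott formula describing the $O(1)$ error in terms of the spectrum of the linearized Poincaré return map of $\gamma$ to force a mismatch for some $N\ge 2$. A secondary subtlety is that level $Na$ may carry geodesics not arising as iterates of a single geodesic at level $a$, but the length spectrum is discrete under nondegeneracy, so at each $N$ only finitely many extra cases arise and the same degree-index comparison handles them.
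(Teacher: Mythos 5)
Your overall strategy is the same as the paper's (Theorem \ref{thm-nilpotence} and Theorem \ref{thm-nilpotent-cohomology}): reduce to level classes via associated classes and the critical-value inequalities, localize at the finitely many nondegenerate orbits at level $a$ where the local level (co)homology sits in degrees $\lambda,\lambda+1$, and compare the linear growth of the degree of $\beta^{*N}$ (resp. $\beta^{\circledast N}$) with Bott's growth of $\lambda(\gamma^N)$. But the proposal stops exactly at its self-identified crux. In the borderline case where the two linear rates coincide you write that ``one must invoke the finer Bott formula \dots to force a mismatch for some $N\ge 2$,'' and this is precisely the step where the nondegeneracy hypothesis does all the work: without it the theorem is false, since when all geodesics are closed the index growth is extremal and the powers $\Theta^{*r}$ and $\Omega^{\circledast r}$ are nonzero for every $r$ (Theorems \ref{thm-non-nilpotent}, \ref{thm-cohomology-nontrivial}; see also Theorem \ref{prop-level-products}). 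The paper closes this gap with Proposition \ref{prop-index-iterates}: if $\gamma$ and $\gamma^2$ are nondegenerate then the inequalities (\ref{eqn-average}) are strict, i.e. $|\lambda-\lambda_{av}|<n-1$, because equality forces $1$ to be an eigenvalue of the Poincar\'e map, contradicting nullity zero. Hence the borderline slope-equality cannot occur at all, the index of iterates grows strictly faster (resp. slower) than the degree of the powers, and for large $N$ the degree leaves the window $\{\lambda(\gamma^N),\lambda(\gamma^N)+1\}$, forcing vanishing. As written, your argument asserts that a mismatch can be forced but does not prove it, and that is the heart of the theorem.

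Two further points. First, the claim $\bar\lambda(\gamma)\in[0,n-1]$ is false: the average index satisfies only $|\lambda-\bar\lambda|\le n-1$ and is unbounded (it scales linearly under iteration), so the cohomology argument ``slope $k+n-1\ge n>\bar\lambda$'' collapses. The cohomology case is not simpler; it has the symmetric borderline (maximal growth, $\bar\lambda=\lambda+n-1$), which again is excluded only by the strictness coming from nondegeneracy. Second, geodesics at level $Na$ that are not iterates of the orbits at level $a$ cannot be handled ``by the same degree-index comparison'' --- Bott's theorem says nothing about their indices, which may accidentally match the degree. They are eliminated instead by the support statement for the products (Proposition \ref{prop-product-extended} and Proposition \ref{prop-cohomology-supports}): $\beta^{*N}$ is supported on $\L^{<Na}\cup(\Sigma_0^{=a})^{*N}$, whose only critical points are the iterates $\gamma_j^N$, so the components of $\beta^{*N}$ at every other orbit vanish automatically. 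This is how the paper makes precise your unproved assertion that $\eta^{*N}$ is ``concentrated near $\gamma^N$.''
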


\subsection{}
On the other hand, non-nilpotent classes exist when all geodesics are closed.
Suppose $E$ is the energy function of a metric in which all
geodesics on $M$ are closed, simply periodic, and have the same prime
length $\ell$, as defined in \S \ref{subsec-def-all-closed}.  The
critical values of $F = \sqrt{E}$ are the (non-negative) integer 
multiples of $\ell.$  The set of critical points  with
critical value $r\ell$ ($r \ge 1$) form a (Morse-Bott) nondegenerate 
critical submanifold $\Sigma_r\subset \L$ that is diffeomorphic to 
the unit sphere bundle $SM$ by the mapping $\alpha \mapsto \alpha'(0)/r\ell.$

Let $\lambda_r$ be the Morse index of any geodesic of 
length $r\ell.$  Let $h = \lambda_1 + 2n-1$ where $n = \dim(M).$  
Then $H_i(\L^{\le \ell})=0$ for $i >h.$ Let
\[ \Theta \in H_h(\L^{\le \ell};G) \cong G\]
be a generator of the top degree homology group.  
In \S \ref{sec-all-closed} and Corollary \ref{cor-level-ring} we prove:

\begin{thm}  The $r$-fold Chas-Sullivan product
\[ \Theta^{*r} \in H_{\lambda_r +2n-1}(\L^{\le r\ell}, \L^{<r\ell};G) \cong G\]
generates the top degree homology at the level $r\ell,$ and more generally,
the Chas-Sullivan product with $\Theta$ induces an isomorphism
\[ H_i(\L^{\le a}, \L^{<a}) \to H_{i+h-n}(\L^{\le a+\ell}, \L^{<a+\ell})\]
for all degrees $i$ and for all level values $a.$  The energy $E$ determines a
filtration $0=I_0\subset I_1 \subset \cdots \subset H_*(\L,\L_0)$ such that
$I_j*I_k \subset I_{j+k}.$  The associated graded ring is isomorphic (with
degree shifts) to the ring
\begin{equation}\label{eqn-ring}
 \text{Gr}_IH_*(\L,\L_0) \cong H_*(SM)[T]_{\ge 1}\end{equation}
of polynomials of degree $\ge 1,$ where $H_*(SM)$ denotes the homology 
(intersection) ring of $SM.$
\end{thm}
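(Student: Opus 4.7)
The plan is Morse-Bott throughout: identify the level homology at level $r\ell$ with the homology of $SM$ shifted by $\lambda_r$, show that the Chas-Sullivan level product becomes the intersection product on $SM$, and then read off both parts of the theorem. Each critical submanifold $\Sigma_r\cong SM$ is Morse-Bott nondegenerate of index $\lambda_r$, and its negative normal bundle $\nu_r^-$ is orientable (using orientability of $M$ when $G=\Z$), so the Thom/Morse-Bott isomorphism gives
\[ \H_i(\L^{\le r\ell},\L^{<r\ell}) \;\cong\; H_{i-\lambda_r}(SM), \]
with top degree $\lambda_r+2n-1$. At $r=1$ this is exactly $h$, and $\Theta$ corresponds to the fundamental class $[SM]$. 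Bott's iteration formula $\lambda_{r+1}-\lambda_r=\lambda_1+n-1=h-n$, which holds in this uniform setting via parallel transport of the Jacobi fields generating $\nu_r^-$, ensures that all the degree shifts in the theorem are consistent.

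The key geometric claim is that, under these identifications, the Chas-Sullivan level product becomes the intersection product $H_a(SM)\otimes H_b(SM)\to H_{a+b-(2n-1)}(SM)$. I plan to prove this chainwise. Represent level classes by Morse-Bott chains (negative disk bundles over subchains of $SM$) and apply the Chas-Sullivan construction using the concatenation reparametrized so that $F$ is additive (cf.\ \S\ref{sec-CS-product}). The Gysin map for $\F\hookrightarrow\L\times\L$ (rank $n$) restricts $\Sigma_r\times\Sigma_s$ to $\Sigma_r\times_M\Sigma_s\cong SM\times_M SM$, and the concatenation push-forward, followed by the $-\grad E$ flow into $\Sigma_{r+s}$, factors through the diagonal $SM\hookrightarrow SM\times_M SM$. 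Matching the Thom class of $\nu_r^-\oplus\nu_s^-$ with that of $\nu_{r+s}^-$ (a discrepancy of rank $n-1$, exactly the codimension of this diagonal) converts the $M$-intersection (codimension $n$) into the $SM$-intersection (codimension $2n-1$).

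Granted the key claim, $\Theta*-$ corresponds to intersection with $[SM]$, which is the identity on $H_*(SM)$; therefore $\Theta*-$ is an isomorphism on level homology as asserted, and iterating shows $\Theta^{*r}\leftrightarrow[SM]$ generates the top level homology at level $r\ell$. For the graded ring, the filtration $I_r=\image(H_*(\L^{\le r\ell},\L_0)\to H_*(\L,\L_0))$ satisfies $I_r*I_s\subset I_{r+s}$ by Proposition \ref{prop-product-extended}. The connecting maps $\partial:\H_*(\L^{\le r\ell},\L^{<r\ell})\to\H_{*-1}(\L^{<r\ell},\L_0)$ vanish by induction on $r$: the base case $r=1$ is trivial since $\L^{<\ell}\simeq\L_0$ makes the target zero, and the inductive step uses the compatibility of $\partial$ with $\Theta*-$ together with the fact that $\Theta*-$ is an isomorphism to push the vanishing up level by level. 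Hence $I_r/I_{r-1}\cong H_{*-\lambda_r}(SM)$, and combined with the geometric step this yields $\text{Gr}_I H_*(\L,\L_0)\cong H_*(SM)[T]_{\ge 1}$ with the claimed degree shifts. The main obstacle is the geometric identification in the middle paragraph: carefully tracking how broken concatenations of critical loops flow under $-\grad E$ to $\Sigma_{r+s}$, and matching the Thom orientations of $\nu_r^-$, $\nu_s^-$, and $\nu_{r+s}^-$ consistently with Bott's parallel-transport description of $\nu_r^-$. Once this is established the remaining pieces are essentially bookkeeping.
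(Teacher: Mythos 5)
Your plan is essentially the paper's own proof: identify the level homology at level $r\ell$ with $H_*(SM)$ via the Morse--Bott/Thom isomorphism (Theorem \ref{thm-Morse-Bott}), show the level Chas--Sullivan product becomes the intersection product on $SM$ by matching the negative disk bundles over $\Sigma_r\times_M\Sigma_1$ with $\Sigma_{r+1}^-$ through the concatenation map and a brief $-\nabla F$ flow (Theorem \ref{thm-calculating-CS}, via the local diffeomorphism (\ref{eqn-diffeo})), and then get perfectness and the graded ring $H_*(SM)[T]_{\ge 1}$ by induction on the level (Theorem \ref{thm-non-nilpotent}, Corollary \ref{cor-level-ring}). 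The two facts you assert in passing --- the maximal index growth $\lambda_r=r\lambda_1+(r-1)(n-1)$ and the orientability of the negative bundles $\Gamma_r$ --- are exactly what the paper establishes, not by parallel transport, but from Bott's inequalities (\ref{eqn-Bott-inequality1})--(\ref{eqn-Bott-inequality2}) combined with the maximal nullity, and in Proposition \ref{prop-orientable}.
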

\noindent
The full Chas-Sullivan ring $H_*(\L)$ was computed by R. Cohen, J. Jones, and
J. Yan \cite{CJY} for spheres and projective spaces.
The relatively simple formula (\ref{eqn-ring}) is compatible with their computation.
It seems likely that there may be other results along these lines when the
Riemannian metric has large sets of closed geodesics.

\subsection{}
In \S \ref{sec-coH-closed} and Corollary \ref{cor-cohomology-graded}
we prove the analogous result for cohomology.
Suppose that all geodesics on $M$ are closed, simply periodic,
and have the same prime length $\ell.$  Then $H^i(\L^{\le \ell}, \L^{=0})
=0$ for $i < \lambda_1.$  Let
\[ \Omega \in H^{\lambda_1}(\L^{\le \ell}, \L^{=0};G) \cong G\]
be a generator of the lowest degree cohomology group $G = \Z
\text{ or } \Z/(2).$

\begin{thm} The $r$-fold product
\[ \Omega^{\circledast r} \in H^{\lambda_r}(\L^{\le r\ell}, \L^{<r\ell})
\cong G\]
generates the lowest degree cohomology class at level $r\ell$ and more generally,
the product with $\Omega$ induces an isomorphism
\[ H^i(\L^{\le a},\L^{<a}) \to H^{i+h-n}(\L^{\le a+\ell}, \L^{<a+\ell})\]
for all degrees $i$ and all level values $a.$  Moreover, the energy
induces a filtration
\[ H^i(\L,\L_0) = I^0 \supset I^1 \supset I^2 \supset \cdots \]
by ideals such that $I^j \circledast I^k \subset I^{j+k}.$  The associated
graded ring $Gr^IH^*(\L,\L_0)$ is isomorphic (with degree shifts) to the ring,
\[ H^*(SM)[T]_{\ge 1}\]
where $H^*(SM)$ denotes the cohomology ring of $SM.$
\end{thm}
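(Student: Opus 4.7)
The plan is to parallel the homology argument of \S\ref{sec-all-closed} but working with the cohomology product $\circledast$ and using the inequality $\Cr(\alpha \circledast \beta) \ge \Cr(\alpha) + \Cr(\beta)$ from Proposition \ref{prop-coprod-critical}. First I would compute the level cohomology $H^*(\L^{\le r\ell}, \L^{<r\ell})$ by Morse--Bott theory: each critical submanifold $\Sigma_r \cong SM$ is Bott-nondegenerate with negative bundle $\nu_r^-$ of rank $\lambda_r$, so a tubular-neighborhood deformation retraction plus the Thom isomorphism (with coefficients $G$ chosen so that $\nu_r^-$ is orientable) yields
\[ H^i(\L^{\le r\ell}, \L^{<r\ell};G) \;\cong\; H^{i-\lambda_r}(SM;G). \]
For $r = 1$ this gives the vanishing for $i < \lambda_1$ and identifies $\Omega$ with the Thom class of $\nu_1^-$, corresponding to $1 \in H^0(SM)$.

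Next I would iterate. By Bott's formula in the all-closed case, $\lambda_{r+1} - \lambda_r = \lambda_1 + n - 1 = h - n$, so $\lambda_r = r\lambda_1 + (r-1)(n-1)$, which matches the total degree of $\Omega^{\circledast r}$ (each $\Omega$ contributes $\lambda_1$, each $\circledast$ contributes $n-1$). The critical-value inequality forces $\Cr(\Omega^{\circledast r}) \ge r\ell$, and the Thom identification above forces $\Omega^{\circledast r}$ to lie in $H^{\lambda_r}(\L^{\le r\ell}, \L^{<r\ell}) \cong G$ (at higher levels, $H^{\lambda_r}(\L^{\le s\ell}, \L^{<s\ell}) \cong H^{\lambda_r - \lambda_s}(SM) = 0$ since $\lambda_s > \lambda_r$ for $s > r$). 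To show it is a generator I would argue inductively, analyzing the $\circledast$-product geometrically near the critical manifolds. Using the reparametrization-family definition of $\circledast$ from \S\ref{sec-cohomology-products}, the product of two Thom-type classes supported near $\Sigma_1$ and $\Sigma_{r-1}$ factors through the figure-eight concatenation $\Sigma_1 \times_M \Sigma_{r-1} \to \Sigma_r$ (loops sharing a basepoint concatenate to a geodesic of length $r\ell$); under the Thom isomorphisms this identifies $\circledast$-multiplication by $\Omega$ with the cup product by $1 \in H^0(SM)$, i.e.\ the identity. Consequently $\Omega^{\circledast r}$ is the Thom class of $\nu_r^-$, hence a generator. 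The isomorphism $H^i(\L^{\le a}, \L^{<a}) \to H^{i+h-n}(\L^{\le a+\ell}, \L^{<a+\ell})$ then follows at once: both sides vanish unless $a$ and $a + \ell$ are critical values, and at $a = r\ell$ the map is the identity on $H^{i - \lambda_r}(SM)$.

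For the filtration, set $I^j = \{ \alpha \in H^*(\L,\L_0) : \Cr(\alpha) \ge j\ell \}$. The critical-value inequality gives $I^j \circledast I^k \subset I^{j+k}$, and the successive quotients $I^j / I^{j+1}$ inject into $H^*(\L^{\le j\ell}, \L^{<j\ell}) \cong H^{* - \lambda_j}(SM)$; the isomorphism statement above implies this injection is actually an isomorphism once one identifies the image with the shifted copy of $H^*(SM)$. The same figure-eight concatenation analysis, now applied to pairs at arbitrary levels $j\ell, k\ell$, identifies the induced product on $\mathrm{Gr}^I H^*(\L, \L_0)$ with the cup product on $H^*(SM)$ tensored with the addition of level indices, which gives the ring isomorphism with $H^*(SM)[T]_{\ge 1}$ after letting $T$ record the level shift (with the bidegree dictated by $\lambda_{j+k} - \lambda_j - \lambda_k$).

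The main obstacle is the geometric identification of $\circledast$ on level cohomology with the cup product on $H^*(SM)$. Unlike the Chas--Sullivan product, $\circledast$ is defined by a one-parameter family of reparametrizations that does not obviously restrict to the critical submanifolds, so one must deform cocycle representatives inside a Morse--Bott neighborhood of each $\Sigma_r$, verify that the family can be taken to sweep the concatenation locus transversally, and track the interaction with the normal form of $E$ as well as the orientation of $\nu_r^-$. Once this local analysis is in hand the ring structure and the iterated generator statement fall out as in the homology case.
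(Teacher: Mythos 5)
Your overall scaffolding is the same as the paper's: identify $H^*(\L^{\le r\ell},\L^{<r\ell};G)\cong H^{*-\lambda_r}(SM;G)$ via the Morse--Bott/Thom isomorphism $h_r$ (using Proposition \ref{prop-orientable} for orientability of $\Gamma_r$), prove that under $h_r$ the level product $\circledast$ becomes the cup product on $H^*(SM)$ (the paper's Theorem \ref{thm-stronger-cohomology}), and then deduce perfectness, injectivity of $\circledast\,\Omega$, and the ring isomorphism with $H^*(SM)[T]_{\ge 1}$ by the same induction used in the homology case. However, the central step --- ``$\circledast$-multiplication by $\Omega$ is cup product with $1$'' --- is exactly what you leave as an acknowledged obstacle, and it is where essentially all the work in the paper lies, so as it stands the proposal has a genuine gap rather than a complete argument. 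Two specific ingredients are missing. First, one needs the local diffeomorphism $\tau:\Sigma_{r+1}^-\to\Sigma_r^-\times_M\Sigma_1^-$ between negative (unstable) manifolds (equation (\ref{eqn-diffeo})), which rests on the transversality of $\ev_0$ on the negative manifolds and on the exact dimension count $\lambda_{r+1}=\lambda_r+\lambda_1+n-1$, via Theorem \ref{thm-Morse-Bott}; this is what lets you compute the product on cocycles supported near the critical sets at all. Second, and more delicately, one must compare Thom classes: in the paper's diagram this is the square $\boxed{1}$, where the rank-$n$ Thom class $\mu_{TM}$ of the figure-eight locus must be identified with $\mu_{\Upsilon}\smile\mu_{\mathbf O}$, $\Upsilon$ being the rank-$(n-1)$ normal bundle of $\Sigma_{r+1}$ inside $\Sigma_r*\Sigma_1$ and $\mathbf O$ the trivial line coming from the interval $I'$ of reparametrizations. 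The proof is the bundle splitting $\pi^*(TM)\cong\nu\oplus\nu^{\perp}$ over $SM$, with $\Upsilon\cong\nu^{\perp}=\ker(d\pi)$ and the $s$-direction of the family $J_r$ identified with the tautological line $\nu$ (the tangent direction of the loop), giving $J_r^*(TM)\cong\Upsilon\oplus\mathbf O$. This is precisely the ``degree shift of one'' you flag but do not resolve; without it, nothing rules out the induced map on $H^*(SM)$ being cup product with an Euler class of $\nu^{\perp}$ (or zero) instead of the identity.

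A related imprecision: there is no concatenation map $\Sigma_1\times_M\Sigma_{r-1}\to\Sigma_r$; a generic figure-eight of two closed geodesics with a common basepoint is not a closed geodesic. Rather $\Sigma_{r}$ sits inside $\Sigma_{r-1}*\Sigma_1\cong SM\times_M SM$ as the codimension-$(n-1)$ diagonal, so the level product involves a Gysin map for this diagonal (degree shift $n-1$) plus the extra $+1$ from the interval, and the bookkeeping only closes up via the bundle isomorphism above. The paper also takes care to work in the space $\A$ of PPAL loops with the embedding $\phi_{r/(r+1)}$ and the map $J_r$ precisely so that energies can be tracked through the reparametrization family (cf.\ Proposition \ref{prop-coproduct-levels} and \S\ref{subsec-proof-cohomology}); your proposal implicitly assumes this level control. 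So: right strategy, same as the paper's, but the heart of the proof (Theorem \ref{thm-stronger-cohomology} and in particular the Thom-class comparison in $\boxed{1}$) is asserted rather than established.
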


\subsection{Counting closed geodesics}\label{intro-counting}
By \cite{Vigue} if $M$ is a compact, simply connected
Riemannian manifold whose cohomology algebra $H^*(M;\mathbb Q)$ cannot be
generated by a single element then the Betti numbers of $\L$ form an
unbounded sequence, whence by \cite{Gromoll}, the manifold $M$ admits
infinitely many prime closed geodesics.  This result leaves open the case
of spheres and projective spaces (among others).

It is known \cite{Bangert, Franks, Nancy2} that
any Riemannian metric on $S^2$ has infinitely many prime closed geodesics,
and it is conjectured that the same holds for any Riemannian sphere or
projective space of dimension $n>2.$ (But see \cite{Ziller} for
examples of Finsler metrics on $S^2$ with finitely many prime closed
geodesics, all of which are nondegenerate.)  It should, in principle, be possible
to count the number of closed geodesics using Morse theory on the
free loop space $\L,$ but each prime geodesic $\gamma$ is associated with
infinitely many critical points, corresponding to the iterates $\gamma^m.$
So it would be useful to have an operation on $H_*(\L)$ that corresponds
to the iteration of closed geodesics.  

If $\lambda_1$ is the Morse index
of a prime closed geodesic $\gamma$ of length $\ell$, then by \cite{Bott1}
(cf. Proposition \ref{prop-index-iterates}), the Morse
index $\lambda_m$ of the iterate $\gamma^m$ can be anywhere between
$m\lambda_1 -(m-1)(n-1)$ and $m\lambda_1 + (m-1)(n-1).$   For
nondegenerate critical points, the
Chas-Sullivan product $[\ov\gamma]*\cdots * [\ov\gamma]$ is non-zero
exactly when (cf. Theorem \ref{prop-level-products}) the index growth
is minimal (i.e., when $\lambda_m =  m\lambda_1 - (m-1)(n-1)$).  Here,
$[\ov\gamma] \in H_{\lambda_1+1} (\L^{\le \ell}, \L^{<\ell})$ is the level
homology class represented by the $S^1$ saturation of $\gamma$.  The
Pontrjagin product (on the level homology of the based loop space) is
zero unless $\lambda_m = m\lambda_1.$  The level cohomology product 
$\circledast$ is non-zero when the index growth
is maximal (cf.~Proposition \ref{prop-index-iterates}).

\subsection{Related products}  In \cite{CS}, Chas and Sullivan also
defined a Lie algebra product $\left\{\alpha,\beta\right\}$ on the homology
$H_*(\L)$ of the free loop space.  In \S \ref{sec-bracket} we combine
their ideas with the construction of the cohomology product $\circledast$
to produce a Lie algebra product on the cohomology $H^*(\L,\L_0)$  In \S 
\ref{subsec-bracket-nondegenerate} we use the calculations described in
\S \ref{intro-counting} to show that these products are sometimes non-zero.
Also following \cite{CS}, we construct products 
on the $T=S^1$-equivariant cohomology $H^*_T(\L,\L_0).$
\subsection{} Several of the proofs in this paper require technical results
that are well-known to experts (in different fields) but are difficult to 
find in the literature.  These technical tools are described in the Appendices,
as are the (tedious) proofs of Proposition \ref{prop-cohomology-commutative} 
and Theorem \ref{thm-X}.  The collection of products and their
definitions can be rather confusing, so in each case we have created a ``boxed''
\boxed{diagram} which gives a concise way to think about the product.

\subsection{Acknowledgments} We wish to thank Fred Cohen and Dennis Sullivan
for a number of valuable conversations.   We are pleased to thank Matthias
Schwarz, who long ago pointed out to the second author
the concept and importance of nilpotence of products, 
in the context of the pair-of-pants product.  The authors would 
also like to thank the Institute for Advanced Study in Princeton N.J. for its 
hospitality and support during the academic year 2005-2006.  The second 
author was supported in 2005-06
by a grant to the Institute for Advanced Study by the Bell Companies
Fellowship.  The first author is grateful to the Defense Advanced Research
Projects Agency for its support from grant number HR0011-04-1-0031.

\section{The free loop space}\label{sec-freeloop}
\subsection{}\label{subsec-free-loop}
Throughout this paper, $M$ denotes an $n$ dimensional
smooth connected compact Riemannian manifold.  Let $\alpha:[a,b] \to M$
be a piecewise smooth curve.  Its {\em length} and {\em energy} are given
by
\[ L(\alpha) = \int_a^b|\alpha'(t)|dt\ \text{ and }\ E(\alpha) = \int_a^b
|\alpha'(t)|^2dt.\]
The Cauchy-Schwartz inequality says that $L(\alpha)^2 \le (b-a)E(\alpha).$
The formulas work out most simply if we use the Morse function $F(\alpha) =
\sqrt{E(\alpha)}.$

The {\em free loop space} $\Lambda$ consists of $H^1$ mappings $\alpha:[0,1] 
\to M$ such that $\alpha(0) = \alpha(1).$  It admits the structure of a
Hilbert manifold (\cite{Klingenberg}, \cite{Chataur}), so it is a complete
metric space, hence paracompact and Hausdorff.  The loop space $\Lambda$ is homotopy
equivalent to the Frechet manifold of smooth loops $\beta:S^1 \to M.$
Denote by $\L_0 =\L^{\le 0} \cong M$ the space of constant loops.

The energy of a loop depends on its parametrization; the length does not.
Thus, $L(\alpha) \le F(\alpha)$ for all $\alpha \in \L,$
with equality if and only if the loop is {\em parametrized proportionally to arc
length} (abbreviated PPAL), meaning that $|\alpha'(t)|$ is constant.  Every
geodesic is, by definition, parametrized proportionally to arclength. A 
loop $\alpha \in \L$ is a critical point of $F$ if and only if $\alpha$ 
is a closed geodesic.  Let $\Sigma \subset \L$ be the set of critical 
points of $F,$ and set $\Sigma^{=a} = \Sigma \cap \L^{=a}.$

The index and nullity of the
critical points of $F$ coincide with those of $E.$ Recall (for example, from
\cite{Klingenberg} p.~57) that the index of a closed geodesic $\gamma$ is
the dimension of a maximal subspace of $T_{\gamma}(\L)$ on which the Hessian
$d^2F(\gamma)$ is negative definite, and the nullity of $\gamma$ is
$\dim(T_{\gamma}^{0}\Lambda)-1$ where $T_{\gamma}^{0}\Lambda$ is the null
space of the Hessian $d^{2}F(\gamma).$ The $-1$ is
incorporated to account for the fact that every closed geodesic $\gamma$
occurs in an $O(2)$ orbit of closed geodesics. The critical point $\gamma$ is
\emph{nondegenerate} if this single orbit is a nondegenerate Morse-Bott critical
submanifold, or equivalently, if the nullity is zero. A number $a \in 
\mathbb R$ is a {\em nondegenerate critical value} if the critical set 
$\Sigma^{=a}$ consists of nondegenerate critical orbits.  In this case 
there are finitely many critical orbits in $\Sigma^{=a}$ and the number
$a\in \mathbb R$ is an isolated critical value. In \S \ref{sec-all-closed} 
we will encounter a critical set $\Sigma^{=a}$ of dimension $>1$ 
(consisting of geodesics with nullity $>0$), which is nondegenerate 
in the sense of Bott.  In this case we say the critical value $a\in 
\mathbb R$ is {\em nondegenerate in the sense of Bott}.  To
distinguish ``nondegenerate" from ``nondegenerate in the sense of Bott", we
will sometimes refer to the former case with the phrase ``isolated 
nondegenerate critical orbit".

Denote by $\A\subset \Lambda$ the subspace of loops parametrized proportionally
to arc length (PPAL).  Then $F(\alpha) = L(\alpha)$ for all $\alpha \in \A.$
We write $\A^{\le a}$ (etc.) for those $\alpha \in \A$ such that $F(\alpha)
\le a,$ cf. equation (\ref{eqn-notation}). The following result is due to 
Anosov \cite{Anosov}.
\begin{prop}\label{prop-PPAL-space}
For all $a \le \infty$ the inclusion $\A^{\le a} \to \L^{\le a}$ is a homotopy
equivalence.  A homotopy inverse is the mapping $A:\L^{\le a} \to \A^{\le a}$
which associates to any path $\alpha$ the same path parametrized proportionally
to arclength, with the same basepoint. It follows that the set of loops of 
{\em length} $\le a$ also has the homotopy type of $\L^{\le a}.$\qed
\end{prop}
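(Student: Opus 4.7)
The plan is to build an explicit reparametrization map $A:\Lambda\to\mathcal{A}$ and then exhibit a sublevel-preserving homotopy from $i\circ A$ to $\mathrm{id}_\Lambda$ by linearly interpolating speed profiles. For a nonconstant $\alpha\in\Lambda$ write $\ell = L(\alpha)$ and define $\sigma_\alpha(t) = \ell^{-1}\int_0^t|\alpha'(u)|\,du$; this is a continuous non-decreasing surjection $[0,1]\to[0,1]$ whose flat intervals coincide exactly with intervals on which $\alpha$ is constant. Therefore $A(\alpha)(s):=\alpha(t)$ for any $t\in\sigma_\alpha^{-1}(s)$ is a well-defined PPAL loop of constant speed $\ell$ with the same basepoint $\alpha(0)$, extended by $A(\alpha)=\alpha$ on constant loops. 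By Cauchy--Schwarz,
$$F(A(\alpha)) = L(A(\alpha)) = \ell \le F(\alpha),$$
so $A$ sends $\Lambda^{\le a}$ into $\mathcal{A}^{\le a}$. For $\alpha\in\mathcal A$, $|\alpha'|$ is a.e.\ constant so $\sigma_\alpha(t)=t$, giving $A\circ i_{\mathcal A}=\mathrm{id}_{\mathcal{A}}$.

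Next I interpolate between the given parametrization and the arc-length parametrization of the same trace. Let $\hat\alpha:[0,\ell]\to M$ denote the unit-speed parametrization (so $A(\alpha)(s)=\hat\alpha(\ell s)$), and put
$$s_\tau(t) := (1-\tau)\int_0^t|\alpha'(u)|\,du + \tau\ell t.$$
Define $H:\Lambda\times[0,1]\to\Lambda$ by $H(\alpha,\tau)(t)=\hat\alpha(s_\tau(t))$. Then $H(\cdot,0)=\mathrm{id}_\Lambda$, $H(\cdot,1)=i\circ A$, and $|H(\alpha,\tau)'(t)| = s_\tau'(t) = (1-\tau)|\alpha'(t)| + \tau\ell$, so by Minkowski's inequality followed by the Cauchy--Schwarz bound $\ell \le F(\alpha)$,
$$F(H(\alpha,\tau)) \le (1-\tau)F(\alpha) + \tau\ell \le F(\alpha).$$
Hence $H$ preserves every sublevel set $\Lambda^{\le a}$ and provides the required homotopy. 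The concluding assertion about loops of length $\le a$ then follows from the same argument applied with $F$ replaced by $L$, using that $L=F$ on $\mathcal{A}$.

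The main obstacle is verifying continuity of $A$ and $H$ in the $H^1$ topology on $\Lambda$, since $C^0$-continuity is immediate from the fact that $\alpha\mapsto\sigma_\alpha$ is continuous from $H^1(S^1,M)$ into $C^0([0,1],[0,1])$. The $H^1$-continuity is delicate precisely at loops on which $|\alpha'|$ vanishes on a set of positive measure, because small $H^1$-perturbations there can cause large-scale variations in $\sigma_\alpha^{-1}$. This is the technical point that Anosov's paper \cite{Anosov} addresses, via approximation by loops whose speed is bounded away from zero together with a limiting argument exploiting the uniform a priori estimate $F(A(\alpha)) \le F(\alpha)$.
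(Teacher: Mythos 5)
Your construction is exactly the one the paper has in mind, and in fact the paper offers no proof of this proposition at all: it is quoted from Anosov's paper with a \qed, so your explicit description of $A$, the speed-interpolation homotopy $H$, and the Minkowski/Cauchy--Schwarz estimate $F(H(\alpha,\tau))\le(1-\tau)F(\alpha)+\tau L(\alpha)\le F(\alpha)$ showing that sublevel sets (and, via $L$ being reparametrization-invariant, the sets of loops of length $\le a$) are preserved is a correct fleshing-out of what the statement asserts. The one point you leave open, the joint $H^1$-continuity of $A$ and $H$ at loops with degenerate speed, is precisely the technical content of the cited reference \cite{Anosov}, so your treatment is on the same footing as the paper's.
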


\subsection{}\label{subsec-evaluation}
The {\em evaluation mapping}
${\mathbf{ev}}_s: \Lambda \to M$ is given by $\ev_s(\alpha) = \alpha(s).$
The {\em figure eight space} $\mathcal F= \L \times_M \L$ is the pullback
of the diagonal under the mapping
\begin{equation}
\label{eqn-evaluation-mapping}
 \ev_0 \times \ev_0: \Lambda \times \Lambda \to M \times M.
\end{equation}
It consists of composable pairs of loops.
Denote by $\phi_s:\mathcal F \to \L$ the mapping which joins the two loops at
time $s,$ that is,
\[ \phi_s(\alpha,\beta)(t) = \begin{cases}
\alpha(\frac{t}{s}) & \text{for }\ t \le s \\
\beta(\frac{t-s}{1-s}) & \text{for }\ s \le t \le 1 \end{cases}. \]
The mapping $\phi_s$ is one to one.
The energy of the composed loop $\phi_s(\alpha,\beta)$ is
\[ E(\phi_s(\alpha,\beta)) = \frac{E(\alpha)}{s} + \frac{E(\beta)}{1-s}\]
which is minimized when
\begin{equation}\label{eqn-minimizing-time}
s = \sqrt{E(\alpha)}/( \sqrt{E(\alpha)} + \sqrt{E(\beta)}).\end{equation}

\begin{lem}\label{lem-phi} Consider $M=\L_0 \times_M \L_0$ to be a subspace
of $\mathcal F=\L \times_M \L.$  Then the mapping $\phi_{\min}: \mathcal F-M 
\to \L$ defined by $\phi_{\min}(\alpha,\beta) = \phi_s(\alpha,\beta)$ for
\[s =\frac{F(\alpha)}{F(\alpha) + F(\beta)}
\]
extends continuously across $M$, giving a mapping
$\phi_{\min}:\F \to \L$ which is homotopic to the embedding 
$\phi_s:\mathcal F \to \L$ for any $s \in (0,1),$ and which satisfies
\begin{equation}\label{eqn-Fsum}
F(\phi_{\min}(\alpha,\beta)) = F(\alpha) + F(\beta).
\end{equation}
If $\alpha$ and $\beta$ are PPAL then so is $\phi_{\min}(\alpha,\beta).$
\qed
\end{lem}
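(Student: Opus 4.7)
The plan is to dispatch the algebraic identities first and then the topological claims, with the main technical hurdle being joint continuity of the homotopy near the ``corner'' $M \subset \F$. Direct substitution of $s = F(\alpha)/(F(\alpha)+F(\beta))$ into the previously derived formula $E(\phi_s(\alpha,\beta)) = F(\alpha)^2/s + F(\beta)^2/(1-s)$ simplifies to $(F(\alpha)+F(\beta))^2$, proving (\ref{eqn-Fsum}) on $\F - M$. For the PPAL claim, the speed of $\phi_{\min}(\alpha,\beta)$ equals $F(\alpha)/s$ on $[0,s]$ and $F(\beta)/(1-s)$ on $[s,1]$; these two speeds agree precisely when $s = F(\alpha)/(F(\alpha)+F(\beta))$.

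For the extension across $M$, I define $\phi_{\min}(\alpha_0,\beta_0)$ to be the constant loop at the common basepoint $p = \alpha_0(0) = \beta_0(0)$. Given a sequence $(\alpha_k,\beta_k) \to (\alpha_0,\beta_0) \in M$ in $\F$, the identity just proved gives $F(\phi_{\min}(\alpha_k,\beta_k)) = F(\alpha_k)+F(\beta_k) \to 0$, so the derivative of $\phi_{\min}(\alpha_k,\beta_k)$ tends to zero in $L^2$; combined with the basepoint convergence $\phi_{\min}(\alpha_k,\beta_k)(0) = \alpha_k(0) \to p$, the Sobolev embedding $H^1 \hookrightarrow C^0$ forces $H^1$-convergence to the constant loop at $p$. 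This establishes continuity of $\phi_{\min}$ and extends (\ref{eqn-Fsum}) to all of $\F$.

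For the homotopy, fix $s_0 \in (0,1)$ and set $\sigma(\alpha,\beta) = F(\alpha)/(F(\alpha)+F(\beta))$ on $\F - M$. Define $H\colon [0,1]\times\F \to \L$ by
\[ H_r(\alpha,\beta) = \phi_{(1-r)s_0 + r\sigma(\alpha,\beta)}(\alpha,\beta) \]
on $\F - M$, interpreting $\phi_0(\alpha,\beta) = \beta$ and $\phi_1(\alpha,\beta) = \alpha$ at degenerate values of $\sigma$; on $M$, set $H_r(\alpha_0,\beta_0)$ equal to the constant loop at the common basepoint for all $r \in [0,1]$. Joint continuity on the open stratum where $F(\alpha), F(\beta) > 0$ is standard from the continuity of $(\alpha,\beta,t) \mapsto \phi_t(\alpha,\beta)$ in $H^1$. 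The main obstacle is controlling $H$ on the degenerate strata (where $F(\alpha) = 0$ or $F(\beta) = 0$) and at the boundary $M$, because the interpolating parameter $t_r = (1-r)s_0 + r\sigma$ can then approach $0$ or $1$. A case analysis on whether $\sigma$ is above or below $s_0$ yields a uniform energy bound $E(H_r(\alpha,\beta)) \le C_{s_0}(F(\alpha)+F(\beta))^2$, since each singular denominator in $F(\alpha)^2/t_r + F(\beta)^2/(1-t_r)$ is balanced by the corresponding vanishing numerator. Together with basepoint continuity, this yields joint $H^1$-continuity of $H$, and then $H_0 = \phi_{s_0}$, $H_1 = \phi_{\min}$ furnish the required homotopy.
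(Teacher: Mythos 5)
Your proposal is correct. The paper offers no argument for this lemma at all (it is stated with a \qed, the verification being regarded as routine), so there is nothing to compare against; your write-up simply supplies the missing details, and they check out: the substitution into $E(\phi_s(\alpha,\beta))=E(\alpha)/s+E(\beta)/(1-s)$ gives (\ref{eqn-Fsum}), the equal-speed computation gives the PPAL claim, and the linear interpolation of gluing times from $s_0$ to $\sigma=F(\alpha)/(F(\alpha)+F(\beta))$ is the natural homotopy. The only place where your phrasing is a bit glib is the assertion that the uniform energy bound $E(H_r)\le C_{s_0}(F(\alpha)+F(\beta))^2$ ``together with basepoint continuity'' yields joint $H^1$-continuity: that suffices near $M$, where the bound forces the total energy to zero and Cauchy--Schwarz gives uniform collapse to the constant loop, but on the strata where exactly one factor is constant you also need that the reparametrized non-constant piece converges in $H^1$ (its gluing interval has length bounded away from $0$ there, so this is immediate); with that one sentence added, the argument is complete.
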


If $A,B\subset \L$ write $A \times_M B = (A \times B) \cap (\F)$ and
define $A*B = \phi_{\min}(A \times_M B)$ to be the
subset consisting of all composed loops, glued together at the energy-minimizing
time.  Then $\L^{\le a} * \L^{\le b} \subset \L^{\le a+b}.$

\subsection{}\label{subsec-normal-bundle}
By \cite{Chataur} Prop. 2.2.3 or \cite{Baker} Prop. 1.17, the figure eight
space $\mathcal F = \L \times_M \L$ has an $n$ dimensional normal bundle $\nu$ and
tubular neighborhood $N$ in $\L \times \L$ (see \S \ref{subsec-tubes})
because the mapping (\ref{eqn-evaluation-mapping}) is a submersion whose
domain is a Hilbert manifold.  Similarly, for any $a,b \in \mathbb R$ the
space
\[ \mathcal F^{<a,<b} = \left\{ (\alpha,\beta) \in \L^{<a} \times \L^{<b}:\
\alpha(0) = \beta(0) \right\}\]
has a normal bundle and tubular neighborhood in $\L^{<a} \times \L^{<b}$
and the  image $\phi_s(\mathcal F)$ has a normal bundle
and tubular neighborhood in $\Lambda$  because it is the pre-image of the
diagonal $\Delta \subset M \times M$ under the submersion
\[\begin{CD}
(\ev_0,\ev_s):\Lambda  @>>> M \times M. \end{CD} \]

The normal bundle of $\Delta$ in $M\times M$ is non-canonically isomorphic
to the tangent bundle $TM,$ so the normal bundle $\nu$ is orientable
if $M$ is orientable.

\section{The finite dimensional approximation of Morse}\label{sec-FDA}
\subsection{}In this section we recall some standard facts concerning
the finite dimensional approximation $\mathcal M$ to the free loop
space $\Lambda$ of a smooth compact Riemannian manifold $M.$  This
finite dimensional approximation was described
by Morse \cite{Morse} but his description is rather difficult to interpret by
modern standards.  It was clarified by Bott \cite{Bott2}  and further described
by Milnor \cite{Milnor}.  Related finite dimensional models are discussed
in \cite{Bahri}.

Fix $\rho >0$ less than one half the injectivity radius of $M.$
For points $x,y \in M$ which lie at a distance less than $\rho,$
we will write $|x-y|$ for this distance.

\begin{lem}  \label{lem-points-are-close}  Fix $N \ge 1.$
Let $x=(x_0,x_1,\cdots,x_N) \in M^{N+1}.$  Let $\alpha:[0,1] \to M$ be any
piecewise smooth curve such that $\alpha(i/N) = x_i.$  If
$F(\alpha) \le \rho \sqrt{N}$ then $|x_i-x_{i-1}|
\le \rho$ for each $i = 1,2,\cdots, N$ and hence, for each $i$ there
is a unique geodesic segment from $x_{i-1}$ to $x_i.$  If $\gamma=\gamma(x)$
denotes the path obtained by patching these geodesic segments together,
with $\gamma(i/N) = x_i$ then
\[F(\gamma(x)) = \sqrt{N\textstyle{\sum_{i=1}^N} |x_i - x_{i-1}|^2}.\]
\end{lem}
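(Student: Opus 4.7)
The plan is to attack the two assertions separately, both via Cauchy--Schwarz applied to each of the $N$ subintervals $[(i-1)/N,\, i/N]$.

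First I would prove the pointwise bound $|x_i - x_{i-1}| \le \rho$. For each $i$, write $\alpha_i$ for the restriction of $\alpha$ to $[(i-1)/N,\, i/N]$. Since $\alpha_i$ is a piecewise smooth curve joining $x_{i-1}$ to $x_i$, its length dominates the Riemannian distance: $|x_i - x_{i-1}| \le L(\alpha_i)$ (taking this as an a priori inequality in $[0,\infty]$ via the convention that $|x-y|=\infty$ when the two points are farther apart than $\rho$ need not worry us, since we will immediately bound the right-hand side). The Cauchy--Schwarz inequality recalled in \S\ref{subsec-free-loop} gives $L(\alpha_i)^2 \le (1/N)E(\alpha_i)$, and summing the energies over $i$ yields
\[
|x_i - x_{i-1}|^2 \le \tfrac{1}{N} E(\alpha_i) \le \tfrac{1}{N} E(\alpha) = \tfrac{1}{N} F(\alpha)^2 \le \rho^2.
\]
Hence $|x_i - x_{i-1}| \le \rho$, which is less than the injectivity radius, so a unique minimizing geodesic segment $\gamma_i$ from $x_{i-1}$ to $x_i$ exists.

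Next I would compute $F(\gamma)$ directly. Parametrize each segment $\gamma_i$ on $[(i-1)/N,\, i/N]$ proportionally to arclength so that $\gamma(i/N) = x_i$ as required; then $|\gamma_i'(t)|$ is the constant $N\,|x_i - x_{i-1}|$, and
\[
E(\gamma_i) = \int_{(i-1)/N}^{i/N} |\gamma_i'(t)|^2\,dt = N\,|x_i - x_{i-1}|^2.
\]
Summing over $i$ gives $E(\gamma) = N \sum_{i=1}^N |x_i - x_{i-1}|^2$, and taking the square root produces the stated formula for $F(\gamma(x))$.

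There is no real obstacle here; the only mildly delicate point is verifying that the inequality $|x_i - x_{i-1}| \le L(\alpha_i)$ is legitimate before we know the two points are within the injectivity radius, which is why I would phrase the first step so that Cauchy--Schwarz, not the geodesic geometry, does the initial work of bounding $|x_i - x_{i-1}|$ by $\rho$.
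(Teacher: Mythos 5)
Your proposal is correct and follows essentially the same route as the paper: per-segment Cauchy--Schwarz gives $|x_i-x_{i-1}|\le L(\alpha_i)\le \sqrt{E(\alpha_i)/N}\le\rho$, and then the energy of the piecewise geodesic is computed segment by segment as $E(\gamma_i)=N|x_i-x_{i-1}|^2$. The extra care you take about distance versus length before invoking the injectivity radius is a fine (if implicit in the paper) clarification, not a deviation.
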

\proof  Let $\alpha_i:\left[ \frac{i-1}{N}, \frac{i}{N}\right] \to M$ denote the
$i$-th segment of the path.  Then $L(\alpha_i)^2 \le E(\alpha_i)/N \le \rho^2.$
Therefore $|x_i-x_{i-1}| \le \rho.$  The energy of the resulting piecewise geodesic
path $\gamma$ is therefore
$E(\gamma)=\Sigma_{i=1}^N E(\gamma_i)= N\Sigma_{i=1}^N |x_i-x_{i-1}|^2.$
\qed

For $N \ge 1$ and $a \in \mathbb R$ let
\[ \M_N^{\le a} = \left\{ (x_0,x_1,\cdots,x_N)\in M^{N+1}:\ x_0 = x_N
\text{ and } F(\gamma(x)) \le a \right\}.\]
According to Lemma \ref{lem-points-are-close}, if $a \le \sqrt{N}\rho$
then we have a well defined mapping
\begin{equation}\label{eqn-iota}
\gamma:\mathcal M_N^{\le a} \hookrightarrow \L.\end{equation}

\begin{prop}\label{prop-fda} Suppose $a \le \sqrt{N}\rho.$  Then the mapping
$F\circ\gamma:\M_N^{\le a}\to \mathbb R$ is smooth and proper.
The restrictions $\gamma:\M^{\le a} \hookrightarrow \L^{\le a}$ and
$\gamma:\M^{<a} \hookrightarrow \L^{<a}$ are homotopy equivalences.
The mapping $\gamma$  identifies the critical points (with values
$\le a$) of $F\circ\gamma$ with the critical points (with values $\le a$)
of $F.$  The Morse index and nullity 
of each critical point are preserved under this identification.
If, in addition, $a$ is a regular value of $F$ or if $a$ is a nondegenerate
critical value of $F$ in the sense of Bott (cf. \S \ref{subsec-free-loop})
then the spaces $\M_N^{\le a}$ and $\L^{\le a}$ have the homotopy types of
finite simplicial complexes.
\end{prop}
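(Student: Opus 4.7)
The plan is to treat the claims in the order stated, using Lemma \ref{lem-points-are-close} to guarantee unique minimizing geodesic segments between consecutive marked points. Smoothness of $F\circ\gamma$ follows from the identity $(F\circ\gamma)(x)^2 = N\sum_{i=1}^N |x_i-x_{i-1}|^2$ of Lemma \ref{lem-points-are-close}, since the squared geodesic distance is a smooth function on pairs within the injectivity radius. Properness is automatic because $\M_N^{\le a}$ is a closed subset of the compact manifold $\{x\in M^{N+1}:x_0=x_N\}$.

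For the homotopy equivalence I would construct an explicit retraction $r:\L^{\le a}\to\M_N^{\le a}$ by $r(\alpha) = (\alpha(0),\alpha(1/N),\dots,\alpha(1))$; the hypothesis $F(\alpha)\le a\le\sqrt{N}\rho$ together with Lemma \ref{lem-points-are-close} forces the image into $\M_N^{\le a}$, and $F(\gamma(r(\alpha)))\le F(\alpha)$ since the piecewise geodesic minimizes energy among curves with the given marked values. To pass from the identity to $\gamma\circ r$ via a homotopy in $\L^{\le a}$, I would apply the standard stage-by-stage straightening argument: on the subinterval $[(i-1)/N,i/N]$, interpolate linearly in geodesic normal coordinates centered at $\alpha((i-1)/N)$ between $\alpha|_{[(i-1)/N,i/N]}$ and the minimizing geodesic to $\alpha(i/N)$. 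Because the geodesic minimizes energy within the convex normal ball, the energy is nonincreasing throughout, so the homotopy stays in $\L^{\le a}$; the same argument works with $\le a$ replaced by $<a$.

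The identification of critical points is a standard first-variation computation: using $\nabla_y|x-y|^2 = -2v$ (where $v\in T_yM$ is the initial velocity of the geodesic from $y$ to $x$), vanishing of $\partial(F\circ\gamma)^2/\partial x_i$ for every $i$ is equivalent to the incoming and outgoing geodesic tangent vectors at $x_i$ being negatives of each other, i.e., to $\gamma(x)$ being a smooth closed geodesic. The key assertion about index and nullity rests on a decomposition $T_{\gamma(x)}\L = T_x\M_N\oplus V$, where $T_x\M_N$ is identified with the subspace of vector fields along $\gamma(x)$ that are Jacobi on each segment $[(i-1)/N,i/N]$ (such fields are uniquely determined by their marked values since each segment is shorter than the first conjugate distance), and $V$ consists of vector fields vanishing at every marked point. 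Integration by parts, together with the Jacobi equation, kills the cross-term $d^2E(X,Y)$ for $X$ piecewise Jacobi and $Y\in V$; and Morse's index theorem, applied segment by segment, shows that $d^2E$ is positive definite on $V$. Hence the Hessian is block-diagonal with respect to this decomposition, and the index and nullity of $d^2F$ on $T_{\gamma(x)}\L$ coincide with those of $d^2(F\circ\gamma)$ on $T_x\M_N$.

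For the final claim, when $a$ is a regular value of $F\circ\gamma$ the sublevel set $\M_N^{\le a}$ is a compact smooth manifold with boundary and therefore triangulable. When $a$ is a nondegenerate Bott critical value, the Morse-Bott lemma presents $\M_N^{\le a}$ as obtained from $\M_N^{\le a-\varepsilon}$ by attaching finitely many disk bundles along the compact nondegenerate critical submanifolds of $\Sigma^{=a}\cap\M_N$, giving a finite CW structure; the conclusion for $\L^{\le a}$ then follows from the homotopy equivalence. I expect the main obstacle to be the Hessian comparison, since the complement $V$ must be chosen to be genuinely orthogonal to $T_x\M_N$ under the Hessian form (not merely a vector-space complement), and this orthogonality depends delicately on the integration-by-parts identity for piecewise Jacobi fields.
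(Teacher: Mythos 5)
Your treatment of the smoothness/properness claims, the critical-point identification, and the index/nullity comparison via broken Jacobi fields is the standard material the paper leaves to references, and your retraction $r$ is exactly the paper's homotopy inverse $h$. The genuine problem is in the homotopy from the identity to $\gamma\circ r$: you straighten the loop on each subinterval by \emph{linear interpolation in normal coordinates} between $\alpha|_{[(i-1)/N,i/N]}$ and the minimizing geodesic, and you justify staying inside $\L^{\le a}$ by saying ``the geodesic minimizes energy within the convex normal ball, so the energy is nonincreasing throughout.'' That inference is invalid: minimality of the geodesic bounds the energy of the \emph{endpoint} of your interpolation, not of the intermediate curves. An intermediate curve is neither $\alpha$ nor the geodesic, and coordinate-linear interpolation gives no comparison of its energy with $E(\alpha)$ unless the energy functional were convex along such interpolations, which it is not in general (inside a ball of radius $\rho$, bounded only by the injectivity radius, the metric can have enough curvature that dragging an arc across the ball temporarily increases its energy). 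Since the entire point of the proposition is to control the sublevel sets $\L^{\le a}$ and $\L^{<a}$, a homotopy that may leave $\L^{\le a}$ is a real gap, not a technicality. The fix is the classical Morse--Milnor homotopy, which is what the paper uses: for $T\in[(i-1)/N,i/N]$ let $H_T(\alpha)$ agree with $\alpha$ up to time $T$, run the unique minimizing geodesic from $\alpha(T)$ to $\alpha(i/N)$ on $[T,i/N]$, and coincide with the broken geodesic $\gamma(r(\alpha))$ afterwards. Every intermediate curve is obtained from $\alpha$ by replacing sub-arcs with minimizing geodesics between the same endpoints (parametrized on the same subintervals), so its energy is at most $E(\alpha)$ by inspection, and the homotopy stays in $\L^{\le a}$ (and in $\L^{<a}$ in the strict case).

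On the final claim your route also diverges from the paper's, but legitimately. The paper handles the Bott-nondegenerate critical value by Whitney-stratifying $\M_N^{\le a}$ (using the generalized Morse lemma) and invoking triangulability of Whitney stratified spaces, so $\M_N^{\le a}$ is literally homeomorphic to a finite complex. You instead invoke a Morse--Bott attachment of disk bundles to describe the \emph{exact} sublevel $\M_N^{\le a}$, not just $\M_N^{\le a+\varepsilon}$; this is correct for a proper function on a finite-dimensional manifold with a Bott-nondegenerate critical level, but it is precisely the delicate point (the local normal form gives a fiberwise retraction of $\{|y|\le|x|\}$ onto the negative disk bundle, which must be spliced with the gradient flow away from the critical set), so it deserves at least a sentence of argument or a citation rather than the bare phrase ``the Morse--Bott lemma presents.'' With that caveat, and with the homotopy repaired as above, your outline yields the proposition; the paper's stratification argument buys a homeomorphism to a finite complex, while yours buys only a homotopy equivalence to a finite CW complex, which is all the proposition asserts.
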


\proof
There is a homotopy inverse $h: \L^{\le a} \to \M_N^{\le a}$ which assigns to
any loop $\alpha:[0,1]\to M$ the element $x=(x_0,\cdots,x_N)$ where
$x_i = \alpha(i/N)$ for $0 \le i \le N.$  Since $F(\alpha) \le a,$ Lemma
\ref{lem-points-are-close} implies that $F\circ\gamma(h(\alpha)) \le a.$
The composition $h\circ \gamma$ is the identity.  The composition
$\gamma \circ h:\L^{\le a} \to \L^{\le a}$ is homotopic to the identity:  we describe
a homotopy $H_T$ from $\alpha \in \L^{\le a}$ to $\gamma h(\alpha).$  Given
$T\in [0,1]$ there exists $i$ such that $(i-1)/N \le T \le i/N.$
The homotopy $H_T(\alpha)(t)$ coincides with $\alpha(t)$ for $t \le (i-1)/N.$
It coincides with the piecewise geodesic path $\gamma(\alpha)(t)$ for
$t \ge (i/N).$  For $t$ in the interval $[(i-1)/N,i/N]$ the path $H_T(\alpha)(t)$
agrees with $\alpha $ for $t \le T$ and it is geodesic on $[T,i/N].$
Replacing part of the curve $\alpha$
with a geodesic segment between the same two points does not increase its energy, so
$H_T:\L^{\le a} \times [0,1] \to \L^{\le a}$ is the desired homotopy.

If $a \in \mathbb R$ is a regular value then $\M_N^{\le a}$ is a smooth compact
manifold with boundary, so it can be triangulated, hence $\L^{\le a}$ is
homotopy equivalent to a simplicial complex.

If the critical value $a$ of $F$ is nondegenerate in the sense of Bott, then $a$
is also a (Bott-) nondegenerate critical value of $F\circ\gamma.$  It
is then possible to Whitney stratify $\M_{N}^{\le a}$ so that $\M_{N}^{=a}$ is a
closed union of strata.  The complete argument is standard but technically messy;
here is an outline.  Each connected component of the singular set $S$ of
$F\circ\gamma$ is a stratum. The set $\M_N^{=a} - S$ is another stratum; it is
a manifold because it contains no critical points of $F\circ\gamma.$  Finally,
$\M_N^{<a}$ is the open stratum.
According to the generalized Morse lemma, there exist local coordinate near each
point $x$ in the critical set, with respect to which the function $F\circ\gamma$
has the form $F(\gamma(x))+ \Sigma_{i=1}^r x_i^2 - \Sigma_{i=r+1}^s x_i^2$
(with the last $n-s$ coordinates not appearing in the formula).  Using this,
it is possible to see that the above stratification satisfies the Whitney conditions.

Every Whitney stratified space can be triangulated (\cite{goresky}, \cite{Johnson}), so it
follows that $\M_{N}^{\le a}$ is homeomorphic to a finite simplicial complex, hence
$\L^{\le a}$ is homotopy equivalent to a finite simplicial complex.
\qed

\section{Support, Critical values, and level homology}\label{sec-cr-def}
\subsection{} Continue with the notation $M,\L,F,\Sigma$ of \S \ref{sec-freeloop}.
Let $G$ be an Abelian group. A class $\alpha \in \H_i(\L;G)$ is {\em supported} 
(in \Cech homology, cf. Appendix \ref{sec-homology}) on a closed set 
$A \subset \L$ if there is a class $\alpha' \in
\H_i(A;G)$ such that $\alpha = i_*(\alpha')$ where $i:A\to \L$ is the 
inclusion.   This implies that $\alpha \mapsto 0 \in \H_i(\L,A;G)$ but 
the converse does not necessarily hold. Define the {\em critical value}
${\sf cr}(\alpha)$ to be the infemum
\begin{align*}
 {\sf cr}(\alpha) &= \inf \left\{ a \in \mathbb R:\
\alpha \in \image\left(\H_i(\L^{\le a};G) \to H_i(\L;G)\right) \right\} \\
&= \inf \left\{ a \in \mathbb R:\
\alpha \ \text{ is supported on }\ \L^{\le a} \right\}.
\end{align*}
A non-zero homology class $\alpha$ normally gives rise to a non-zero class 
$\beta$ in level homology at the level $\Cr(\alpha).$  Let us say that
two classes $\alpha \in H_i(\L;G)$ and $\beta \in \H_i(\L^{\le a}, \L^{<a};G)$
are {\em associated} if there exists an {\em associating class}
$\omega \in \H_i(\L^{\le a};G)$ with
\begin{equation}\label{eqn-associated}
\begin{diagram}[size=1.7em]
\omega & \rTo & \alpha && \H_i(\L^{\le a}) & \rTo& \H_i(\L)=H_i(\L)\\
\dTo   &         &        &\text{in }\ & \dTo    &     & \\
\beta  &         &        &         &\H_i(\L^{\le a},\L^{<a}) &&
\end{diagram}\end{equation}

\begin{lem}\label{lem-implications}  Let $\alpha \in H_i(\L;G)$, $\alpha \ne 0.$
Then the following statements hold. \begin{enumerate}
\item $\Cr(\alpha)$ is a critical value of $F.$
\item $\Cr(\alpha)$ is independent of the
homology theory (\Cech or singular) used in the definition.
\item 
$\Cr(\alpha) = \inf \left\{ a \in \mathbb R:\ \alpha \in \ker\left(H_i(\L)
\to \H_i(\L, \L^{\le a};G) \right) \right\}.$
\end{enumerate}
If $a\in \mathbb R$ is a nondegenerate critical value in the sense of Bott, or if
$G$ is a field, then
\begin{enumerate}\setcounter{enumi}{3}
\item $\Cr(\alpha)=a$ if and only if there exists
$0 \ne \beta \in H_i(\L^{\le a}, \L^{<a};G)$ associated with $\alpha.$
\item $\Cr(\alpha) < a$ if and only if  $\alpha$  is associated
to the zero class $0=\beta \in H_i(\L^{\le a}, \L^{<a};G).$
 \end{enumerate}
\end{lem}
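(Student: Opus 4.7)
I plan to establish the five parts in order; parts (1)--(3) are routine, and part (5) is the engine for (4).

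For (1), a standard Morse-theoretic deformation contradiction applies: if $c=\Cr(\alpha)$ were regular, Palais--Smale for $F$ on the Hilbert manifold $\L$ yields a strong deformation retraction $\L^{\le c+\ve}\to \L^{\le c-\ve}$ for some $\ve>0$; applied to any support of $\alpha$ on $\L^{\le a}$ with $a\in(c,c+\ve)$ (which exists by the infimum), this gives support on $\L^{\le c-\ve}$, contradicting $\Cr(\alpha)=c$. Part (2) then follows from (1), since by Proposition \ref{prop-fda} and Lemma \ref{lem-cech} singular and \Cech homology agree at regular and Bott-nondegenerate critical values. Part (3) is exactness of $\H_i(\L^{\le a})\to H_i(\L)\to\H_i(\L,\L^{\le a})$ in the pair long exact sequence.

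For (5), abbreviate $T_a := \H_i(\L^{\le a},\L^{<a})$ and write $\pi, p_*, \iota$ for the natural maps $\H_i(\L^{\le a})\to T_a$, $\H_i(\L^{\le a})\to H_i(\L)$, $\H_i(\L^{<a})\to\H_i(\L^{\le a})$, so $\ker\pi = \image \iota$. The crucial technical input is the continuity
\[
\H_i(\L^{<a}) \;=\; \varinjlim_{a'<a}\H_i(\L^{\le a'}),
\]
valid for singular homology by compact supports of chains (hence when $G$ is a field), and, in the Bott-nondegenerate case, obtained by restricting to nearby regular values where Proposition \ref{prop-fda} and Lemma \ref{lem-cech} identify singular with \Cech. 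Granted this, an associating $\omega$ with $\pi(\omega)=0$ lifts through $\iota$ to a class in some $\H_i(\L^{\le a'})$ with $a'<a$ mapping to $\alpha$, so $\Cr(\alpha)\le a'<a$; the converse is obtained by pushing a sub-$a$ lift of $\alpha$ through $\iota$ into $\ker\pi$.

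For (4), direction $(\Rightarrow)$ is immediate from (5): a lift $\omega$ of $\alpha$ exists, and $\pi(\omega)\ne 0$ else (5) gives $\Cr(\alpha)<a$. Direction $(\Leftarrow)$ is where I expect the main obstacle. Any two associating classes for $\alpha$ differ by an element of $\ker p_*$, so the associated level class $\beta$ is canonical only modulo $Q := \pi(\ker p_*)$; the triple $(\L,\L^{\le a},\L^{<a})$ exact sequence identifies $Q$ with $\ker(j: T_a\to\H_i(\L,\L^{<a}))$, and commutativity of the associated square forces $j(\beta)$ to equal the image $\bar\alpha$ of $\alpha$ in $\H_i(\L,\L^{<a})$. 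Once $Q=0$ is established, $(\Leftarrow)$ is immediate: $\beta\ne 0$ forces $\bar\alpha\ne 0$, which by the argument of part (3) applied to $\L^{<a}$ forces $\Cr(\alpha)\ge a$, whence equality. My plan for verifying $Q=0$ under the hypotheses is to use the Morse--Bott handle decomposition near $\Sigma^{=a}$---under which $T_a$ is identified with the relative Thom homology of the negative normal bundle---and argue at the chain level that no $(i+1)$-cycle representing a class in $\H_{i+1}(\L,\L^{\le a})$ can have boundary landing nontrivially in the level-$a$ handle, since such a boundary could be expressed, after a Morse-theoretic perturbation, as lying already in $\L^{<a}$.
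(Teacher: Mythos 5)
There are two genuine gaps, both located exactly where the hypotheses ``$a$ Bott-nondegenerate or $G$ a field'' have to do real work. The first is in your step ``a lift $\omega$ of $\alpha$ exists'' for (4)$(\Rightarrow)$. The definition of $\Cr(\alpha)$ as an infimum only supplies classes $\omega_n\in \H_i(\L^{\le b_n})$ mapping to $\alpha$ for values $b_n\downarrow a$ (which one may take to be regular); producing a class in $\H_i(\L^{\le a})$ itself is an inverse-limit problem, and the continuity statement you invoke, $\H_i(\L^{<a})=\varinjlim_{a'<a}\H_i(\L^{\le a'})$, is a direct-limit statement about approximation from \emph{below} and gives no help. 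This is precisely what the paper's proof supplies: in the Bott-nondegenerate case $\L^{\le a}$ is a deformation retract of an open set $U$ containing $\L^{\le b_n}$ for large $n$, so $\omega_n$ pushes to $H_i(U)\cong \H_i(\L^{\le a})$; in the field case Lemma \ref{lem-approximation} shows the decreasing images ${\rm Image}\bigl(H_i(\L^{\le b_n})\to H_i(\L^{\le b_1})\bigr)$ stabilize and again produce $\omega$. A related, smaller, gloss: in (3) and in your use of $\ker\pi=\image\iota$ you assert exactness of \Cech sequences at an arbitrary value $a$, which is exactly what \Cech homology can fail to satisfy; the paper circumvents this by arguing at the regular values $b_n$ (where Proposition \ref{prop-fda} and Lemma \ref{lem-cech} identify singular and \Cech homology and exactness is available), respectively via the Bott/field hypotheses.

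The second gap is your plan for (4)$(\Leftarrow)$. You have correctly reduced it to $Q=\pi(\ker p_*)=0$, i.e.\ to the vanishing of the connecting homomorphism $\H_{i+1}(\L,\L^{\le a})\to \H_i(\L^{\le a},\L^{<a})$, but the chain-level argument you sketch --- that no relative cycle can have boundary meeting the level-$a$ handle nontrivially --- is exactly the assertion that no class born at level $a$ is ever killed from above, i.e.\ (half of) perfection of $F$ at level $a$. That is false for a general nondegenerate critical level: a class of $\H_i(\L^{\le a})$ not in the image of $\H_i(\L^{<a})$ which dies in $H_i(\L)$ gives a nonzero element of $Q$, and such birth-death behavior is the norm rather than the exception (perfection is a theorem only in the special setting of \S\ref{sec-all-closed}). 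So this step would fail as stated. Note that the paper's own treatment of (4)--(5) does not go through $Q$ at all: it constructs the lift $\omega$ as above, takes $\beta$ to be its image in $\H_i(\L^{\le a},\L^{<a})$, and uses only exactness of the level-pair sequence together with the observation that if $\beta=0$ then $\omega$ comes from $\H_i(\L^{<a})$, hence is supported at some level strictly below $a$; the directions of (4)--(5) that are subsequently used (e.g.\ in Lemma \ref{lem-nilpotent}) are (4)$(\Rightarrow)$ and (5), which require no statement like $Q=0$.
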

\proof For part (1), if $\Cr(\alpha)$ were a regular
value than the flow of $-\text{grad}(F)$ would reduce the support of $\alpha$
below this value.  Now let $b_n \downarrow \Cr(\alpha)$ be a convergent 
sequence of regular values (which exists because the regular values of $F$ are
dense in $\mathbb R$).  By Lemma \ref{lem-cech} the \Cech homology and singular
homology of $\L^{\le b_n}$ coincide, which proves (2); and the homology sequence
for the pair $(\L, \L^{\le b_n})$ is exact, which proves (3).
If $a$ is a nondegenerate critical value (in the sense of Bott; cf.
\S \ref{subsec-free-loop}) then the set $\L^{\le a}$ is 
a deformation retract of some open set $U \subset \L$ which therefore 
contains $\L^{\le b_n}$ for sufficiently large $n.$  Hence $\alpha$ is 
the image of some 
\[\omega \in H_i(\L^{\le b_n}) \to H_i(U) \cong H_i(\L^{\le a}).\]
Moreover the homology sequence for $(\L^{\le a}, \L^{<a})$ is
exact, which proves (4) and (5).  If $G$ is a field, see Lemma
\ref{lem-approximation}.\qed

\subsection{}
Similarly, a cohomology class $\alpha \in H^j(\L,\L_0;G)$ is {\em supported} 
on a closed set $B \subset \L - \L_0$ if it maps to zero in $\H^j(\L-B,\L_0;G)$ or
equivalently, if it comes from a class in $\H^{j}(\L, \L-B;G).$  Define the
{\em critical value}
\begin{align*} 
\Cr(\alpha) &= \sup \left\{b:\ \alpha \in \ker\left(
H^j(\L,\L_0;G) \to \H^j(\L^{<b},\L_0;G)
\right) \right\} \\
&= \sup \left\{ b:\ \alpha\ \text{ is supported on }\ \L^{\ge b}
\right\} \end{align*}
Let us say the classes $\alpha \in H^j(\L,\L_0;G)$ and $\beta \in
\H^j(\L^{\le b},\L^{<b};G)$ are {\em associated} if there exists $\omega \in
\H^j(\L,\L^{<b};G)$ with
\begin{equation}
\begin{diagram}[size=1.7em]
\omega & \rTo & \alpha && \H^j(\L,\L^{<b}) & \rTo& H^j(\L,\L_0)\\
\dTo   &         &        &\text{in }\ & \dTo    &     & \\
\beta  &         &        &         &\H^j(\L^{\le b},\L^{<b}) &&
\end{diagram}\end{equation}

\begin{lem} Let $\alpha \in H^j(\L,\L_0;G)$, $\alpha \ne 0.$  Then the following
statements hold.\begin{enumerate}
\item $\Cr(\alpha)$ is a critical value of $F.$
\item $\Cr(\alpha)$ is independent of the homology theory used in its definition.
\end{enumerate}
If $a \in \mathbb R$ is a nondegenerate critical value in the sense of Bott, or
if $G$ is a field then \begin{enumerate}\setcounter{enumi}{2}
\item $\Cr(\alpha) = b$ if and only if there exits $0 \ne \beta \in H^j(\L^{\le b},
\L^{<b};G)$ associated with $\alpha.$
\item $\Cr(\alpha) < b$ if and only if $\alpha$ is associated with the zero
class $0 \in H^j(\L^{\le b}, \L^{<b};G).$
\end{enumerate}\end{lem}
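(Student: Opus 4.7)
The lemma is the cohomological counterpart of Lemma \ref{lem-implications}, and my plan is to mirror that proof with all maps reversed. For part (1), suppose for contradiction that $c = \Cr(\alpha)$ is a regular value of $F$. Choose $\epsilon>0$ so that $(c-\epsilon,c+\epsilon)$ contains no critical values; the flow of $-\grad F$ then deformation retracts $\L^{<c+\epsilon}$ onto $\L^{<c-\epsilon}$, giving an isomorphism $H^j(\L^{<c+\epsilon},\L_0) \cong H^j(\L^{<c-\epsilon},\L_0)$ under which the restriction of $\alpha$ vanishes on one side iff on the other, contradicting the supremum property of $c$. For (2), pick a sequence of regular values $b_n \uparrow c$; by Proposition \ref{prop-fda} each pair $(\L^{<b_n},\L_0)$ has the homotopy type of a finite simplicial pair, so by Lemma \ref{lem-cech} its \Cech and singular cohomology agree, and the two definitions of $\Cr(\alpha)$ coincide in the limit.

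For parts (3) and (4), assume $b$ is a Bott-nondegenerate critical value (the field-coefficient case follows by the cohomological analog of Lemma \ref{lem-approximation}). Choose $\epsilon>0$ small enough that $(b-\epsilon,b+\epsilon)$ contains only the critical value $b$. The gradient flow then produces homotopy equivalences
\[
\L^{\le b-\epsilon} \simeq \L^{<b}, \qquad \L^{\le b} \simeq \L^{<b+\epsilon},
\]
which yield canonical isomorphisms $H^*(\L,\L^{<b}) \cong H^*(\L,\L^{\le b-\epsilon})$ and $H^*(\L,\L^{\le b}) \cong H^*(\L,\L^{<b+\epsilon})$. These identifications are the key technical input.

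Now I will splice the long exact sequences of the triples $(\L,\L^{<b},\L_0)$ and $(\L,\L^{\le b},\L^{<b})$. The first tells us that an associating class $\omega \in H^j(\L,\L^{<b})$ for $\alpha$ exists iff the image of $\alpha$ in $H^j(\L^{<b},\L_0)$ vanishes, i.e., iff $\alpha$ is supported on $\L^{\ge b}$; by the Bott-nondegeneracy of $b$ this happens precisely when $\Cr(\alpha) \ge b$. The second sequence says that the further restriction $\beta = \omega|_{(\L^{\le b},\L^{<b})}$ vanishes iff $\omega$ lifts to $H^j(\L,\L^{\le b}) \cong H^j(\L,\L^{<b+\epsilon})$; by the identification above this lift exists precisely when $\alpha$ is supported on $\L^{\ge b+\epsilon}$. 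Combining these two characterizations and interpreting them through the definition of $\Cr$ yields the equivalences (3) and (4).

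The main obstacle is the diagram chase handling the indeterminacy in the choice of associating class $\omega$: different lifts differ by an element in the image of $H^{j-1}(\L^{<b},\L_0) \to H^j(\L,\L^{<b})$ coming from the triple $(\L,\L^{<b},\L_0)$, and its restriction to $H^j(\L^{\le b},\L^{<b})$ equals the connecting map of the triple $(\L^{\le b},\L^{<b},\L_0)$. Verifying that this indeterminacy does not interfere with the equivalences of (3) and (4), and that the Bott-nondegeneracy hypothesis provides exactly the rigidity needed, is where the technical effort concentrates.
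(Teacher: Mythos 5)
Your proposal follows essentially the same route as the paper, whose proof of this lemma is simply to repeat the argument of Lemma \ref{lem-implications} with arrows reversed: the gradient flow for (1), approximation by regular values together with Lemma \ref{lem-cech} for (2), and, for (3)--(4), the Bott-nondegenerate retractions combined with the exact sequences of the triples $(\L,\L^{<b},\L_0)$ and $(\L,\L^{\le b},\L^{<b})$ (with Lemma \ref{lem-approximation} supplying the field-coefficient case). One correction to your final assembly: the characterization you derive --- a lift $\omega$ restricts to $0$ in $H^j(\L^{\le b},\L^{<b})$ iff it comes from $H^j(\L,\L^{\le b})\cong H^j(\L,\L^{<b+\epsilon})$, iff $\alpha$ is supported on $\L^{\ge b+\epsilon}$ --- yields ``$\alpha$ is associated with the zero class iff $\Cr(\alpha)>b$,'' not ``$\Cr(\alpha)<b$'' as printed in part (4); indeed, if $\Cr(\alpha)<b$ then $\alpha$ does not even vanish on $\L^{<b}$, so no associating class $\omega$ exists at all. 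Since $\Cr$ is here a supremum rather than an infimum, the inequality of the homology lemma must be reversed, and the ``$>$'' form is exactly what the paper uses later in \S \ref{sec-nilpotent-cohomology}; so your computation proves the intended statement, but you should say so explicitly rather than claim the literal (4). Finally, the indeterminacy chase you defer is not an obstacle for the direction that gets used: your own observation that $\beta=0$ forces $\omega$ to lift already shows that when $\Cr(\alpha)=b$ \emph{every} associating class has $\beta\ne 0$, which together with the existence of $\omega$ gives the forward implication in (3); the paper's proof is no more detailed than this on the remaining converse.
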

\proof The proof is the same as in Lemma \ref{lem-implications}.  \qed

\section{The Chas-Sullivan Product} \label{sec-CS-product}
\renewcommand{\F}{\mathcal F}
\subsection{} Throughout the next three sections $M$ denotes a connected
Riemannian manifold of dimension $n.$  We assume either (a) the
manifold $M$ is orientable and oriented and coefficients for homology are taken
in the ring of integers $G=\mathbb Z$ or (b) the manifold $M$ is not necessarily
orientable, and coefficients for homology are taken in the ring $G=\mathbb Z/(2).$
We will often suppress mention of the coefficient ring $G$ when assumption
(a) or (b) is in force.

\subsection{}\label{subsec-homology-product}
In \cite{CS} a product  $*:H_i(\L) \times H_j(\L) \to H_{i+j-n}(\L)$
on the homology of the free loop space $\Lambda$ was
defined.  It has since been re-interpreted in a number of different contexts
(\cite{Cohen}, \cite{Cohen1}, \cite{Cohen2}).  Recall, for example from
\cite{Cohen1} or \cite{AS1} that it can be constructed as the following composition:
\begin{diagram}[height=2em]
H_i(\Lambda) \times H_j(\Lambda) && \\
\dTo_{\epsilon\times} && \\
H_{i+j}(\Lambda \times \Lambda)&\rTo^{s}&
H_{i+j}(\Lambda \times \Lambda, \Lambda \times \Lambda - \F) &
\rTo^{\tau}& H_{i+j-n}(\F) &\rTo^{\phi_*}& H_{i+j-n}(\Lambda)
\end{diagram}
In this diagram, $\epsilon = (-1)^{n(n-j)}$ (cf. \cite{Dold} VIII \S 13.3,
\cite{Chataur}), and $\times$ denotes the homology cross product.
The map $\tau$ is the Thom isomorphism (\ref{eqn-Thom-homology}) for the normal
bundle $\nu_{\F}$ of $\F=\L\times_M \L$ in $\L \times \L$
(\cite{Chataur} Prop. 2.2.3, \cite{Baker} Prop. 1.17).  The composition
$\tau\circ s$ is a Gysin homomorphism (\ref{eqn-Gysin-homology}).  The map
$\phi = \phi_{\frac{1}{2}}$ composes the two loops at time $t = 1/2.$
If $M$ is orientable then $\nu_{\F}$ is also orientable (\S \ref{lem-phi}).
We will often substitute the homotopic mapping $\phi_{\min}$ of Lemma
\ref{lem-phi} for $\phi_{\frac{1}{2}}.$  The construction may be summarized
as passing from the left to the right in the following diagram
\begin{equation}
\boxed{\begin{diagram}[size=2em]
\L\times\L & \lTo & \F &\rTo & \L
 \end{diagram}}\end{equation}
It is well known that the
Chas-Sullivan product is (graded) commutative, but this is not entirely
obvious since it involves reversing the order of composition of loops,
and the fundamental group of $M$ may be non-commutative. We include the
short proof because the same method will be used in \S
\ref{sec-cohomology-products}.

\begin{prop}\cite{CS}\label{prop-homology-signs}
If $a \in H_i(\L)$ and $b\in H_j(\L)$ then $b*a = (-1)^{(i-n)(j-n)}a*b.$
\end{prop}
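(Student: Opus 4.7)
The strategy is to run through the three stages of the Chas--Sullivan construction with the two factors interchanged and collect the signs that appear in each stage. Let $T:\L\times\L\to\L\times\L$ be the swap $T(\alpha,\beta)=(\beta,\alpha)$; it preserves $\F=\L\times_M\L$ and sends $\L\times\L-\F$ to itself, so it is compatible with the Gysin step. The goal is to check the identity
\[
\phi_*\circ\tau\circ s\circ (\epsilon'\,\times)\circ T_* \;=\; (-1)^{(i-n)(j-n)}\;\phi_*\circ\tau\circ s\circ (\epsilon\,\times)
\]
on $H_i(\L)\otimes H_j(\L)$, where $\epsilon=(-1)^{n(n-j)}$ and $\epsilon'=(-1)^{n(n-i)}$ are the sign conventions from the cross product.

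\textbf{Step 1: the cross product.} The standard Eilenberg--Zilber identity gives $T_*(a\times b)=(-1)^{ij}(b\times a)$ for $a\in H_i(\L)$, $b\in H_j(\L)$. This contributes a factor $(-1)^{ij}$ when passing from $b\times a$ to $a\times b$.

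\textbf{Step 2: the Gysin map.} Since $T$ preserves $\F$ and its open complement, naturality gives a commutative square relating $s\circ T_*$ to $T_*\circ s$. For the Thom isomorphism $\tau$, the question becomes how $T$ acts on the normal bundle $\nu_\F$. Because $\nu_\F$ is pulled back from the normal bundle of the diagonal $\Delta_M\subset M\times M$ under $\ev_0\times\ev_0$, and the swap on $M\times M$ acts on $\nu_{\Delta_M}\cong TM$ by $-1$ (the identification sends $(v,w)$ mod $\Delta$ to $w-v$, which $T$ negates), the induced action on the Thom class is multiplication by $(-1)^n$. Hence $\tau\circ s\circ T_*=(-1)^n\,T_*\circ\tau\circ s$ on $H_*(\L\times\L)$.

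\textbf{Step 3: the composition map.} After $T$, the two loops are concatenated in the opposite cyclic order: $\phi_{\min}(T(\alpha,\beta))$ is $\beta$ followed by $\alpha$, whereas $\phi_{\min}(\alpha,\beta)$ is $\alpha$ followed by $\beta$. These two maps $\F\to\L$ differ by a cyclic reparametrization of the resulting loop, which is homotopic to the identity on $\L$ via the obvious rotation in the parameter interval. Therefore $\phi_*\circ T_*=\phi_*$ on homology, with no sign.

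\textbf{Assembly.} Multiplying the contributions of Steps 1--3 together with the ratio $\epsilon'/\epsilon=(-1)^{n(n-i)-n(n-j)}=(-1)^{n(j-i)}$ gives
\[
b*a \;=\; (-1)^{ij+n+n(j-i)}\,(a*b)\;=\;(-1)^{ij+ni+nj+n}\,(a*b),
\]
and one checks directly that $(i-n)(j-n)\equiv ij+ni+nj+n \pmod 2$ (using $n^2\equiv n$), so this is the desired identity.

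\textbf{Main obstacle.} The only non-routine point is Step 2, namely that the swap acts by $-1$ on the normal bundle of the diagonal and hence by $(-1)^n$ on the Thom class. Everything else is formal: the cross product sign is standard, and the concatenation homotopy in Step 3 uses only the fact that cyclic rotation of loops is nullhomotopic on $\L$. The whole argument depends only on the last map of the construction being $\phi_{\min}$ (or equivalently $\phi_{1/2}$), for which rotation homotopies are available; this is why the same method will later work for $\circledast$.
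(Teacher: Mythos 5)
Your argument is correct and follows essentially the same route as the paper's proof: the $(-1)^{ij}$ sign from swapping the cross product, the fact that the swap acts on the Thom class of $\nu_{\F}$ by $(-1)^n$ (which the paper simply asserts as $\sigma^*(\mu_{\F})=(-1)^n\mu_{\F}$, and which you justify via the identification of the diagonal's normal bundle with $TM$), and the observation that composing in the opposite order is the rotation $\widehat{\chi}_{1/2}$ of the glued loop, homotopic to the identity on $\L$. Your bookkeeping of the sign conventions $\epsilon,\epsilon'$ and the final congruence $(i-n)(j-n)\equiv ij+ni+nj+n \pmod 2$ matches the paper's computation.
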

\begin{proof}  The map $\sigma:\L\times\L \to \L \times \L$ that switches
factors satisfies $\sigma_*(a \times b) = (-1)^{ij}(b \times a).$  It
restricts to an involution $\sigma:\F \to \F.$
Identifying $S^1 = \mathbb R/\mathbb Z,$ define $\chi_r:S^1 \to S^1$ by
$\chi_r(t) = t+r.$ The (usual) action, $\widehat{\chi}:
S_1\times\L \to\L$ of $S^1$ on $\L$ is given by
\[ \widehat{\chi}_r(\gamma) = \gamma \circ \chi_r\]
for $r \in S^1.$ The action of $\widehat{\chi}_{\half}$ preserves
$\phi_{\half}(\F)$ and in fact
\begin{equation}\label{eqn-chi-switch}
\sigma(\gamma) = \widehat{\chi}_{\frac{1}{2}}(\gamma)=\beta\cdot\alpha
\end{equation}
for any $\gamma = \alpha \cdot \beta \in \phi_{\frac{1}{2}}(\F)$ which is
a composition of two loops $\alpha,\beta$ glued at time $1/2.$  Let
$\mu_{\F} \in H^n(\L\times\L, \L\times\L - \F)$ be the Thom class of the
normal bundle $\nu_F.$  Then $\sigma^*(\mu_{\F}) = (-1)^n \mu_{\F}.$  Since
$\chi_r:\L \to \L$ is homotopic to the identity we have:
\begin{align*}
(-1)^{n(n-i)}b*a &= \phi_*(\mu_{\F} \cap (b \times a))\\
&= (-1)^{ij} \phi_*(\mu_{\F} \cap \sigma_*(a\times b)) \\
&= (-1)^{ij} \phi_* \sigma_*(\sigma^*(\mu_{\F}) \cap (a \times b))\\
&= (-1)^{ij}(-1)^n\widehat{\chi}_{\half}\phi_*(\mu_{\F} \cap (a\times b))\\
&= (-1)^{ij}(-1)^n(-1)^{n(n-j)} a*b \qedhere \end{align*}
\end{proof}

\begin{prop} \label{prop-product-extended}  If $M$ is oriented let $G =
\mathbb Z$ otherwise let $G = \mathbb Z/(2).$  Let $\alpha,\beta \in H_*(\L;G)$ 
be homology classes supported on closed sets $E,F \subset \L$ respectively.  
Then $\alpha*\beta$ is supported on the closed set $E*F = \phi_{\min}(E \times_MF).$
In particular,
\begin{equation}\label{eqn-cr-sum-homology}
\Cr(\alpha*\beta) \le \Cr(\alpha)+\Cr(\beta).\end{equation}
For any $a,b$ with $0 \le a,b \le
\infty$ the Chas-Sullivan product extends to a family of products,
\[
 \H_i(\Lambda^{\le a};G) \times \H_j(\Lambda^{\le b};G) \to \H_{i+j-n}
(\L^{\le a+b};G)
\]
and for any $0 \le a' < a \le \infty$ and $0 \le b' < b \le \infty$ to products
\begin{align}
\H_i(\L^{\le a}, \L^{\le a'};G) \times \H_j(\L^{\le b}, \L^{\le b'};G) &\to
\H_{i+j-n}(\L^{\le a+b}, \L^{\le \max(a+b',a'+b)};G)\\
\H_i(\L^{\le a}, \L^{<a};G) \times \H_j(\L^{\le b}, \L^{<b};G) &\to
\H_{i+j-n}(\L^{\le a+b}, \L^{<a+b};G).\label{eqn-level-product}
\end{align}
These products are compatible under the mappings induced by inclusion.
If the set $\Cr\subset \mathbb R$ of critical values is discrete then we obtain
a ring structure on the {\em level homology}
\begin{equation}\label{eqn-level-homology}
 \underset{a\in \Cr}{\oplus} \H_*(\L^{\le a},\L^{<a};R).\end{equation}
\end{prop}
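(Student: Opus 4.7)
The overall plan is to isolate the one geometric input (Lemma~\ref{lem-phi}, which makes $F$ additive under $\phi_{\min}$) from the homological formalism (Thom/Gysin with tubular neighborhoods and Čech limits over regular-value approximations). Each new product will be obtained by combining these two ingredients with the unfiltered construction recalled in \S\ref{subsec-homology-product}.

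First I would establish the support claim directly. By equation~\eqref{eqn-Fsum}, $\phi_{\min}$ sends $E\times_M F\subset\F$ into $\L^{\le a+b}$ whenever $E\subset\L^{\le a}$ and $F\subset\L^{\le b}$ are closed. Once the filtered product is shown to be constructed out of $\phi_{\min}$ and a Thom map local to $\F$, the support of $\alpha\ast\beta$ will automatically lie in $E\ast F=\phi_{\min}(E\times_M F)\subset\L^{\le a+b}$, giving \eqref{eqn-cr-sum-homology} as an immediate corollary.

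Next I would redo the construction of \S\ref{subsec-homology-product} inside open sublevel sets. Writing $\F^{<a,<b}=(\L^{<a}\times\L^{<b})\cap\F$, \S\ref{subsec-normal-bundle} provides an $n$-dimensional normal bundle and tubular neighborhood for $\F^{<a,<b}$ in $\L^{<a}\times\L^{<b}$, with Thom class in $H^n(\L^{<a}\times\L^{<b},\L^{<a}\times\L^{<b}-\F^{<a,<b};G)$. The composition of (i) the sign-twisted cross product, (ii) cap with this Thom class, and (iii) pushforward by $(\phi_{\min})_*$ defines the product at this open level and lands in $H_{i+j-n}(\L^{<a+b})$ by the support step. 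To pass from open to closed sublevel sets, choose regular values $a_k\downarrow a$ and $b_k\downarrow b$; by Lemma~\ref{lem-cech}, $\H_*(\L^{\le a})=\varprojlim_k H_*(\L^{<a_k})$. Naturality of the tube and Thom class under the inclusions $\L^{<a_k}\subset\L^{<a_\ell}$ ensures the products are compatible with these restrictions, so the system descends to the inverse limit, landing in $\H_{i+j-n}(\L^{\le a+b})$.

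The relative product on pairs is obtained by applying the absolute construction to representing Čech cycles modulo their restriction to the subspace: a class lifting to $\L^{\le a'}$ in the first factor maps under $\phi_{\min}$ into $\L^{\le a'+b}$, and symmetrically for the second factor, so both correction terms land in $\L^{\le\max(a+b',a'+b)}$. Letting $a'\uparrow a$ and $b'\uparrow b$, with the sharp bound $\max(a+b',a'+b)<a+b$ in the strict case, yields the level product~\eqref{eqn-level-product}. Associativity descends from associativity of figure-eight composition (up to canonical reparametrization homotopy), and when the set of critical values is discrete these level groups and products assemble into the graded ring~\eqref{eqn-level-homology}. The principal obstacle throughout is that $\L^{\le a}$ is \emph{not} a Hilbert submanifold when $a$ is a critical value, so neither the Gysin construction nor the Čech--singular identification is immediate on $\L^{\le a}$; the remedy is regular-value approximation combined with Čech continuity (Lemma~\ref{lem-cech}) and the finite-dimensional models of Proposition~\ref{prop-fda}, which together make the inverse-limit construction well-defined and functorial in the sublevel parameters.
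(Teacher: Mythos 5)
Your proposal is correct and takes essentially the same route as the paper: the product is first built on open (sublevel) sets and pairs via the tubular neighborhood of $\F$, the relative Thom/Gysin map, and $(\phi_{\min})_*$, and then extended to $\H_*(\L^{\le a})$, to the relative and level pairings, and to the Cr--inequality by the same \Cech limiting procedures (inverse limit as the upper bounds decrease through regular values, direct limit as the lower bounds increase). The one point to tighten is the support statement for \emph{arbitrary} closed sets $E,F$: it is not automatic from the sublevel construction, since the Thom step cannot be applied to $E,F$ themselves; as in the paper one runs the same open-set construction on a shrinking system of open neighborhoods $A_n\downarrow E$, $B_n\downarrow F$ and invokes \Cech continuity --- and note that the identification $\H_i(\L^{\le a})\cong\varprojlim_{\epsilon}H_i(\L^{<a+\epsilon})$ is Lemma \ref{lem-approximation} rather than Lemma \ref{lem-cech}.
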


\proof
First we construct, for any open sets $A' \subset A \subset \L$ and
$B' \subset B \subset \L$ a product
\begin{equation}\label{eqn-general-product}
H_i(A,A';G) \times H_j(B,B';G) \to H_{i+j-n}(A*B, A*B' \cup A'*B)\end{equation}
on singular homology, as shown in Figure \ref{fig-CSDef}.
\begin{center}\begin{figure}[!h]
{\small\begin{diagram}[size=2em]
H_i(A,A') \times H_j(B,B') & \rTo^{\epsilon\times} & H_{i+j}(A\times B, A' \times B
\cup A \times B')\\
& & \dTo_{=}\\
&&H_{i+j}(A\times B, A\times B - (A-A')\times(B-B'))\\
& & \dTo \\
&&H_{i+j}(A\times B, A\times B - (A-A')\times_M (B-B'))\\
& & \dTo^{\text{Thom isomorphism } \S \ref{prop-Thom}\quad}_{\tau} \\
&&H_{i+j-n}(A\times_MB, A\times_MB - (A-A')\times_M(B-B'))\\
& & \dTo_{=} \\
H_{i+j-n}(A*B, A'*B \cup A*B') & \lTo^{(\phi_{\min})_*}&
H_{i+j-n}(A\times_MB, A'\times_MB \cup A\times_MB')
\end{diagram}}\label{fig-CSDef}
\caption{Relative Chas-Sullivan Product}
\end{figure}\end{center}
In this diagram, $\times$ denotes the homology cross product, $\epsilon =
(-1)^{n(n-j)},$  and the Thom isomorphism $\tau$ of Proposition \ref{prop-Thom}
 is applied to the triple
\[(A-A')\times_M (B-B') \subset A\times_M B \subset
A\times B.\]
The hypotheses of Proposition \ref{prop-Thom} are satisfied
because $A \times B$ is a Hilbert manifold, so $A \times_M B$ has a normal
bundle and tubular neighborhood in $A \times B,$ and because the subspace
$(A-A')\times_M(B-B')$ is closed in $A \times_M B.$

It is easy to see that this product is compatible with the product in \S
\ref{subsec-homology-product} in the sense that the following diagram commutes:
\begin{diagram}[height=2em]
H_i(\L)\times H_j(\L) & \lTo & \H_i(A)\times \H_j(B) & \rTo &
\H_i(A, A') \times \H_j(B, B') \\
\dTo & & \dTo & & \dTo \\
H_{i+j-n}(\L) & \lTo & \H_{i+j-n}(A*B) & \rTo & \H_{i+j-n}
(A*B, A'*B \cup A*B').
\end{diagram}

Now the other products may be obtained by a limiting procedure using
Lemma \ref{lem-approximation}.  The statement about the support of $\alpha*\beta$
follows by taking a sequence of open neighborhoods $A_n \downarrow E$ and
$B_n \downarrow F.$
To construct the product (\ref{eqn-level-product}) for example, start with
\[H_i(\L^{<a+\epsilon}, \L^{<a'-\delta}) \times H_j(\L^{<b+\epsilon},
\L^{<b'-\delta}) \to H_{i+j-n}(\L^{<a+b+\epsilon-\delta},
\L^{<\max(a'+b+\epsilon-\delta, a+b'+\epsilon-\delta)}) \]
where $\delta > \epsilon.$  Taking the (inverse) limit as $\epsilon
\downarrow 0$ gives a pairing
\[\H_i(\L^{\le a},\L^{<a-\delta}) \times \H_j(\L^{\le b}, \L^{<b-\delta})
\to \H_{i+j-n}(\L^{\le a+b}, \L^{<\max(a'+b-\delta, a+b'-\delta)}).\]
Taking the direct limit as $\delta \downarrow 0$ (and recalling from
\S \ref{subsec-Cech-limits} that homology
commutes with direct limits) gives the pairing (\ref{eqn-level-product}).
The other products are similarly constructed.  (The Chas-Sullivan product 
can even be constructed this way, cf. \cite{Chataur}.)  This completes
the proof of Proposition \ref{prop-product-extended}. \qed

\subsection{}
If $\alpha = [A, \partial A]$ and $\beta = [B, \partial B]$
are the fundamental classes of manifolds $(A,\partial A) \subset
(\L^{\le a}, \L^{\le a'})$ and $(B, \partial B) \subset (\L^{\le b},
\L^{\le b'})$ which are transverse over $M$, then the C-S product
$[\alpha]*[\beta]$ is represented by the fundamental class of the manifold
\[\phi_{\min}(A\times_MB, A\times_M \partial B \cup \partial A \times_MB) = 
(A*B, \partial (A*B)).\] For equations (\ref{eqn-level-power}) and (\ref{eqn-diffeo}) 
we will need a similar fact about homology classes $\alpha, \beta$ which are
supported on $(A,\partial A)$ and $(B, \partial B)$ but which are not
necessarily the fundamental classes.  The following proposition is more 
or less the original definition of the product $*$ from \cite{CS}:
\begin{prop}\label{prop-product-energy}
Let $(A, \partial A)$ and $(B, \partial B)$ be smooth manifolds with
boundary.  Let $(A, \partial A) \to (\L^{< \alpha}, \L^{< \alpha'})$ and
$(B, \partial B) \to (\L^{< \beta}, \L^{< \beta'})$ be smooth
embeddings, where $\alpha'<\alpha$ and $\beta' <\beta.$  Assume
the mappings $\ev_0:A \to M$ and $\ev_0:B \to M$ are transverse.  
If $M$ is oriented, then assume $A,B$ are orientable and oriented.
Let $\alpha \in H_i(A, \partial A)$ and $\beta \in
H_i(B, \partial B).$ Denote their images in the homology
of $\L$ by $[\alpha] \in H_i(\L^{<a},\L^{<a'})$ and
$[\beta] \in H_j(\L^{<b},\L^{<b'}).$ Define
\[\alpha*\beta \in H_{i+j-n}(A\times_MB, A\times_M \partial B
\cup \partial A \times_M B)\cong H_{i+j-n}(A\times_MB,
A\times_MB - A' \times_M B')\]
to be the image of $(\alpha, \beta)$ under the following composition.
\begin{equation}\label{eqn-representable}
\begin{diagram}
H_{i}(A,\partial A)\times H_j(B, \partial B)
&\rTo^{\epsilon\times}&
H_{i+j}(A\times B, A\times B - A' \times B')\\
&& \dTo \\
H_{i+j-n}(A\times_M B, A\times_M B - A' \times_M B')&
\lTo_{(\ref{prop-Thom})}^{\tau}&
H_{i+j}(A\times B, A\times B - A' \times_MB')
\end{diagram}\end{equation}
Then 
$ [\alpha]*[\beta] = (\phi_{\min})_*(\alpha *\beta) \in
H_{i+j-n}(\L^{<a+b}, \L^{\max(a+b',a'+b)}).$
\end{prop}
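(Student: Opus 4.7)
The plan is to show that, under the embeddings $A\hookrightarrow\L^{<a}$, $B\hookrightarrow\L^{<b}$, and $A\times_MB\hookrightarrow\F$, the finite-dimensional diagram (\ref{eqn-representable}) maps step by step into the diagram of Figure \ref{fig-CSDef} defining the relative Chas-Sullivan product of $[\alpha]$ and $[\beta]$. Once commutativity of the resulting ``cube'' is established, the image of $(\alpha,\beta)$ at the top of (\ref{eqn-representable}), pushed forward by $(\phi_{\min})_*$, coincides with the image of $([\alpha],[\beta])$ under the defining diagram of Proposition \ref{prop-product-extended}, which is exactly the claim.

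The key geometric input is that the transversality hypothesis on $\ev_0:A\to M$ and $\ev_0:B\to M$ guarantees that $A\times_MB$ is a smooth submanifold of $A\times B$ whose normal bundle is canonically isomorphic to the restriction of the normal bundle $\nu_\F$ of $\F\subset\L\times\L$. This is because both bundles arise as the pullback, along the evaluation maps, of the normal bundle of the diagonal $\Delta\subset M\times M$ (which is $TM$, cf.\ \S\ref{subsec-normal-bundle}); the inclusion $A\times B\hookrightarrow\L\times\L$ fits into a square over $\ev_0\times\ev_0:M\times M \to M\times M$, and transversality ensures the normal directions are preserved. When $M$ is oriented and $A,B$ are compatibly oriented this is an isomorphism of oriented bundles, so the two Thom classes are compatible under restriction.

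With this in hand I would verify naturality in three steps. First, the sign-normalized cross product $\epsilon\times$ is natural under product maps of pairs, so the top arrow of (\ref{eqn-representable}) maps to the corresponding arrow in Figure \ref{fig-CSDef}. Second, the excision/enlargement steps that replace $(A-A')\times(B-B')$ by $(A-A')\times_M(B-B')$ commute with the inclusion $A\times B\hookrightarrow\L\times\L$, since the smaller closed pair embeds into the larger one with matching complement. Third, the Thom isomorphism of Proposition \ref{prop-Thom} is natural under the pullback of normal bundles described above: one may choose a tubular neighborhood of $A\times_MB$ in $A\times B$ as the intersection of $A\times B$ with a tubular neighborhood of $\F$ in $\L\times\L$, so that the Thom classes restrict to one another. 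Finally, $\phi_{\min}:\F\to\L$ is a globally defined map that restricts to $A\times_MB\to\L$, so $(\phi_{\min})_*$ commutes with inclusion-induced maps in homology.

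The main obstacle is the third step: carefully verifying that the finite-dimensional Thom isomorphism on the pair $\bigl(A\times_MB,\, A\times_M\partial B\cup \partial A\times_M B\bigr)$ inside $A\times B$ is compatible with the Hilbert-manifold Thom isomorphism on $(\F, \F-(A\times_M B\setminus\partial(A\times_M B)))$ inside $\L\times\L$. This reduces to the bundle identification of the previous paragraph together with the naturality of the tubular neighborhood construction in Proposition \ref{prop-Thom}; once these are arranged, the whole cube commutes and the conclusion $[\alpha]*[\beta]=(\phi_{\min})_*(\alpha*\beta)$ is immediate.
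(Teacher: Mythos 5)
Your proposal is correct and follows essentially the same route as the paper: the paper also uses the transversality of $(\ev_0,\ev_0):A\times B\to M\times M$ to the diagonal to obtain (via the cited results of Chataur/Baker) a tubular neighborhood of $A\times_M B$ in $A\times B$ with normal bundle $\nu_{\F}|(A\times_M B)$, and then observes that the diagram (\ref{eqn-representable}) maps term by term into the diagram defining the relative Chas--Sullivan product in Proposition \ref{prop-product-extended}, with commutativity being immediate. Your write-up merely makes the naturality checks (cross product, excision, Thom class restriction, $(\phi_{\min})_*$) more explicit than the paper does.
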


\proof
The transversality assumption is equivalent to the statement that the mapping
\[(\ev_0,\ev_0):A \times B \to M \times M\]
is transverse to the diagonal $\Delta.$  By \cite{Chataur} Prop. 2.2.3 or \cite{
Baker} Prop. 1.17, the intersection $A \times_M B = A \times B \cap \F$ has a
tubular neighborhood in $A \times B$ (with normal bundle $\nu_{\F}|(A \times_MB).$
As in the proof of Proposition \ref{prop-product-extended} this makes it
possible to apply the Thom isomorphism (\ref{prop-Thom}).
Then the diagram (\ref{eqn-representable}) maps, term by term, to the diagram
in the proof of Proposition \ref{prop-product-extended} where the
relative C-S product is defined.  The Proposition amounts to the statement
that these mappings commute, which they obviously do.  \qed

\section{Index growth}
Continue with the notation $M,\L,F,\Sigma$ of \S \ref{sec-freeloop}. 

\begin{prop}\label{prop-index-iterates}
Let $\gamma$ be a closed geodesic with index
$\lambda$ and nullity $\nu.$ Let $\lambda_{m}$ and $\nu_{m}$ denote the index
and nullity of the $m$-fold iterate $\gamma^{m}.$ Then $\nu_{m}\leq2(n-1)$ for
all $m$ and
\begin{align}
|\lambda_{m}-m\lambda|  &  \leq(m-1)(n-1)\label{eqn-Bott-inequality1}\\
|\lambda_{m}+\nu_{m}-m(\lambda+\nu)|  &  \leq(m-1)(n-1).
\label{eqn-Bott-inequality2}%
\end{align}
The average index
\[
\lambda_{av}=\lim_{m\rightarrow\infty}\frac{\lambda_{m}}{m}%
\]
exists and
\begin{equation}
|\lambda-\lambda_{av}|\leq n-1\ \text{ and }\ |\lambda+\nu-\lambda_{av}|\leq
n-1. \label{eqn-average}%
\end{equation}
Now assume $\gamma$ and $\gamma^{2}$ are nondegenerate critical points,
i.e. they lie on isolated nondegenerate critical orbits.  Then the
inequalities in (\ref{eqn-average}) are strict, and if $m$ is sufficiently
large then the inequality (\ref{eqn-Bott-inequality1}) is also strict.
 Moreover if we let%
\begin{align*}
\lambda_{m}^{\min}  &  =m\lambda_{1}-(m-1)(n-1);\\
\lambda_{m}^{\max}  &  =m\lambda_{1}+(m-1)(n-1)
\end{align*}
be the greatest and smallest possible values for $\lambda_m$ that are
compatible with (\ref{eqn-Bott-inequality1}) then
\begin{align}
\lambda_{m}  &  >\lambda_{m}^{\min}\Longrightarrow\lambda_{j}>\lambda
_{j}^{\min}\text{ \ for all }j>m;\label{eqn-nomin}\\
\lambda_{m}  &  <\lambda_{m}^{\max}\Longrightarrow\lambda_{j}<\lambda
_{j}^{\max}\text{ \ for all }j>m.\label{eqn-nomax}
\end{align}

\end{prop}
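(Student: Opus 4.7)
My plan is to reduce everything to Bott's iteration theorem \cite{Bott1}. To $\gamma$ one associates its linearized Poincar\'e return map $P$, a symplectic automorphism of a $2(n-1)$-dimensional symplectic vector space. Bott attaches to $P$ non-negative integer-valued functions $\Lambda,N\colon S^1\to\mathbb{Z}$ with $\Lambda(1)=\lambda$, $N(1)=\nu$, satisfying the iteration formulas
\begin{equation*}
\lambda_m=\sum_{\omega^m=1}\Lambda(\omega),\qquad \nu_m=\sum_{\omega^m=1}N(\omega).
\end{equation*}
The nullity $N$ is supported on the (at most $2(n-1)$) unit-modulus eigenvalues of $P$, with total mass $\sum_{\omega\in S^1}N(\omega)\le 2(n-1)$; this is precisely the bound $\nu_m\le 2(n-1)$.

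The paired inequalities (\ref{eqn-Bott-inequality1}) and (\ref{eqn-Bott-inequality2}) are Bott's original theorem, obtained from his analysis of the jumps of $\Lambda$ and $\Lambda+N$ across $S^1$ (each jump controlled by the symplectic splitting of the corresponding eigenspace). In practice one decomposes
\begin{equation*}
\lambda_m-m\lambda=\sum_{\omega^m=1,\,\omega\ne 1}(\Lambda(\omega)-\lambda),
\end{equation*}
estimates each of the $m-1$ summands by $n-1$ in absolute value, and handles (\ref{eqn-Bott-inequality2}) with $\Lambda+N$ in place of $\Lambda$. The average index is the integral $\lambda_{av}=\tfrac{1}{2\pi}\int_0^{2\pi}\Lambda(e^{i\theta})\,d\theta$, to which the Riemann sum $\lambda_m/m$ converges as $m\to\infty$; the bounds (\ref{eqn-average}) then follow by integrating the pointwise estimates above.

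For the strict inequalities under the nondegeneracy hypothesis: the assumption that $\gamma$ and $\gamma^2$ are isolated nondegenerate critical orbits is equivalent to $\nu=\nu_2=0$, i.e.\ $P$ has no eigenvalue at $\pm 1$. Consequently $N$ vanishes identically on a neighborhood of $1\in S^1$ and $\Lambda$ is locally constant, equal to $\lambda$, on that neighborhood. The existence of a positive-length arc on which $\Lambda=\lambda$ strictly pulls the integral $\lambda_{av}$ off its extremes, giving $|\lambda-\lambda_{av}|<n-1$; a symmetric argument near $1$ for the upper-semicontinuous function $\Lambda+N$ produces the other strict inequality in (\ref{eqn-average}). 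For $m$ large, the non-trivial $m$-th roots of unity eventually fall into this neighborhood, making (\ref{eqn-Bott-inequality1}) strict.

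The main obstacle is the monotonicity statements (\ref{eqn-nomin}) and (\ref{eqn-nomax}): the index excess detected at one stage must persist at all later stages. The plan is to rewrite
\begin{equation*}
\lambda_j-\lambda_j^{\min}=\sum_{\omega^j=1,\,\omega\ne 1}\bigl(\Lambda(\omega)-\lambda+(n-1)\bigr),
\end{equation*}
a sum of non-negative integer terms. Strict excess at stage $m$ exhibits a non-trivial $m$-th root $\omega_0$ lying in the open set $U=\{\Lambda>\lambda-(n-1)\}$, a finite union of open arcs on $S^1$. The delicate point is to guarantee, for each $j>m$, a non-trivial $j$-th root of unity in $U$, despite the possibility that individual arcs of $U$ are small. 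Here I would combine the symmetry $\Lambda(\omega)=\Lambda(\bar\omega)$ with a careful eigenvalue-distribution argument for $P$ (following Bott's original case analysis) to show that at every later stage $j>m$ at least one non-trivial $j$-th root of unity lands in $U$. The symmetric statement (\ref{eqn-nomax}) is handled by the same scheme applied to $\Lambda+N$ and the bound $\Lambda+N\le\lambda+\nu+(n-1)$.
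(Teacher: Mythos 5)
Your outline of the routine parts is essentially the paper's own route: everything is funneled through Bott's iteration formula $\lambda_m=\sum_{\omega^m=1}\Lambda(\omega)$, the per-block bound on the jumps giving $|\Lambda(\omega)-\lambda|\le n-1$, the integral formula for $\lambda_{av}$, and the observation that nondegeneracy of $\gamma,\gamma^2$ keeps $\pm1$ out of the spectrum of $P$, which yields the strict versions of (\ref{eqn-average}) and, for large $m$, of (\ref{eqn-Bott-inequality1}). Those steps are fine. The genuine gap is in the monotonicity statements (\ref{eqn-nomin}) and (\ref{eqn-nomax}), which are the real content of the proposition (they are what feeds the nilpotence theorems). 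You reduce them to the claim that if some nontrivial $m$-th root of unity lies in $U=\{\Lambda>\lambda-(n-1)\}$, then for every $j>m$ some nontrivial $j$-th root lies in $U$, and then you explicitly defer this to ``a careful eigenvalue-distribution argument.'' That deferred step is exactly where the work is, and it cannot be done from the properties of $U$ you have put on the table: as a statement about an arbitrary conjugation-symmetric finite union of open arcs containing $1$ and a nontrivial $m$-th root of unity, the claim is false (take $m=2$ and let $U$ be tiny arcs about $\pm1$; no nontrivial cube root of unity lies in $U$). So the argument must exploit the rigidity hidden in the equality case of (\ref{eqn-Bott-inequality1}), not merely the level set of $\Lambda$.

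The mechanism the paper uses is the following. Since the symplectic space has dimension $2(n-1)$ and each indecomposable block has dimension at least $2$, there are at most $n-1$ blocks, each contributing splitting numbers in $\{0,1\}$; hence $|\Lambda(\omega)-\Lambda(\tau)|\le n-1$, and if equality holds with $\operatorname{Re}\omega<\operatorname{Re}\tau$ then \emph{every} block must spend its full jump between $\tau$ and $\omega$, which forces all eigenvalues of $P$ onto the unit circle with real parts in $[\operatorname{Re}\omega,\operatorname{Re}\tau]$. In particular, extremal growth at a single stage $j\ge 2$ pins all eigenvalues into the arcs between $1$ and the nearest nontrivial $j$-th roots of unity, and one then gets the clean criterion that $\lambda_k=\lambda_k^{\max}$ (resp.\ $\lambda_k^{\min}$) precisely when every nontrivial $k$-th root of unity lies beyond all the eigenvalues, i.e.\ has real part no larger than that of every eigenvalue. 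Monotonicity is then immediate from $\cos(2\pi/m)<\cos(2\pi/j)$ for $2\le m<j$: the criterion at stage $j$ implies it at every earlier stage $m$, which is the contrapositive of (\ref{eqn-nomin}) and (\ref{eqn-nomax}). You need to supply this (or an equivalent) rigidity argument; until you do, (\ref{eqn-nomin}) and (\ref{eqn-nomax}) remain unproved in your write-up.
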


Much of this is standard and is well-known to experts, (see the references
at the beginning of Appendix \ref{sec-Bott-Morse}) but for completeness we
include a proof based on the following well-known facts:

\subsection{Well-Known Facts}
Let $M$ be an $n$ dimensional Riemannian manifold.  Let $\gamma$ be a
closed geodesic.  The Poincare map $P$ \ (linearization of the geodesic flow
at a periodic point) is in $Sp(2(n-1),\mathbb{R})$ and is defined up to
conjugation.  The index formula of Bott is
\begin{equation}\label{eqn-WellKnown1}
\lambda_{m}=\text{ index }(\gamma^{m})=\underset{\omega^{m}=1}{\sum
\mathbf{\Omega}_{\gamma}}(\omega).
\end{equation}
where \ the $\omega$-index  $\Omega_{\gamma}$ is an integer valued function
defined on the unit circle with  $\mathbf{\Omega}_{\gamma}(\omega
)=\mathbf{\Omega}_{\gamma}(\overline{\omega})$. The function
$\mathbf{\Omega}_{\gamma}$ is constant except at the eigenvalues of $P$.  Its
jump at each eigenvalue is determined by the  \emph{splitting numbers}
$\ S_{P}^{\pm}(\omega)\epsilon \mathbb{Z}:$
\begin{equation}\label{eqn-WellKnown2}
\underset{\varepsilon\rightarrow0^{+}}{\lim}\mathbf{\Omega}_{\gamma}(\omega
e^{\pm i\varepsilon})=\mathbf{\Omega}_{\gamma}(\omega)+S_{P}^{\pm}(\omega).
\end{equation}
which depend only upon the conjugacy class of $P.$ The nullity satisfies%
\begin{equation}
\nu_{m}=\text{ nullity }(\gamma^{m})=\underset{\omega^{m}=1}{\sum}%
\mathcal{N}_{P}(\omega)
\end{equation}
where $\mathcal{N}_{P}(\omega):=$ $\dim\ker(P-\omega I)$. \ The numbers
$S_{P}^{\pm}(\omega)$ and $\mathcal{N}_{P}(\omega)$ are additive on
indecomposable symplectic blocks, and on each block
\begin{align}
&  S_{P}^{\pm}(\omega)\epsilon\{0,1\};\label{eqn-WellKnown3}\\
&  \mathcal{N}_{P}(\omega)-S_{P}^{\pm}(\omega)\epsilon\{0,1\}
\label{eqn-WellKnown4}\end{align}

\subsection{Proof of Proposition \ref{prop-index-iterates}}
Assuming the above facts (\ref{eqn-WellKnown1}-\ref{eqn-WellKnown4}),
the index-plus-nullity  satisfies
\begin{align}
\lambda_{m}+\nu_{m}=\text{ index-plus-nullity }(\gamma^{m}) & =\underset
{\omega^{m}=1}{\sum\mathbf{\Upsilon}_{\gamma}}(\omega);\\
\underset{\varepsilon\rightarrow0^{+}}{\lim}\mathbf{\Upsilon}_{\gamma}(\omega
e^{\pm i\varepsilon}) &  =\mathbf{\Upsilon}_{\gamma}(\omega)-T_{P}^{\pm
}(\omega);
\end{align}
where%
\begin{align*}
\mathbf{\Upsilon}_{\gamma}(\omega) &  :=\mathbf{\Omega}_{\gamma}%
(\omega)+\mathcal{N}_{P}(\omega);\\
T_{P}^{\pm}(\omega) &  :=\mathcal{N}_{P}(\omega)-S_{P}^{\pm}(\omega).
\end{align*}
Thus $T_{P}^{\pm}(\omega)$ is additive and takes values in $\{0,1\}$ on each
indecomposable block, and $-(\lambda_{m}+\nu_{m})$ and $-\mathbf{\Upsilon
}_{\gamma}$ have the same formal properties
(\ref{eqn-WellKnown1}-\ref{eqn-WellKnown3}) as $\lambda_{m}$ and
$\mathbf{\Omega}_{\gamma}$.  Thus a proof of  the statements about
$\lambda_{m}$\ using only these three properties will also serve as a
proof of the statements about $\lambda_{m}+\nu_{m}$.

\quash{
\textbf{Remark} \ The symmetry between $\lambda_{m}$ and $\mathbf{\Omega
}_{\gamma}$, and $(\lambda_{m}+\nu_{m})$ and $\mathbf{\Upsilon}_{\gamma}$, is
another example of Poincar\'{e} duality at play in the free loop space. \}
}
As a consequence of equations (\ref{eqn-WellKnown1}-\ref{eqn-WellKnown3})
we have:
\begin{equation}\label{eqn-omega-difference}
|\mathbf{\Omega}_{\gamma}(\omega)-\mathbf{\Omega}_{\gamma}(\tau)|\leq
n-1\ \text{ for all }\ \omega,\tau.
\end{equation}
Moreover if $|\mathbf{\Omega}_{\gamma}(\omega)-\mathbf{\Omega}_{\gamma}%
(\tau)|=n-1$, with $\operatorname{Re}\omega$ $<\operatorname{Re}$ $\tau$, then
all the eigenvalues of $P$ lie in the unit circle, with real part in
$[\operatorname{Re}\omega$ $,\operatorname{Re}$ $\tau]$. (To see this, note
that each indecomposable block has dimension at least $2.$)  Equation
(\ref{eqn-Bott-inequality1}) and the first half of (\ref{eqn-average}) follow.
Moreover,
\begin{equation}\label{eqn-index-integral}
\lambda_{av}=\frac{1}{2\pi}\int_0^{2\pi}
\mathbf{\Omega}_{\gamma}(e^{it})dt
\end{equation}

Equality in \ref{eqn-Bott-inequality1} implies that $\lambda_{m}=
\lambda_{m}^{\max}$ or $\lambda_{m}=\lambda_{m}^{\min}$.  Equality in
the first half of \ref{eqn-average}, together with (\ref{eqn-omega-difference})
and (\ref{eqn-index-integral}) implies that
$ |\mathbf{\Omega}_{\gamma}(\omega)-\mathbf{\Omega}_{\gamma}(1)|=n-1$
almost everywhere on the circle, that $1$ is the only
eigenvalue of $P$, and thus that $ \lambda_{m}=\lambda_{m}^{\max}$ for all
$m$ or $\lambda_{m}=\lambda_{m}^{\min}$ for all $m$.

If $\gamma$ and $\gamma^{2}$ are nondegenerate, then neither $-1$ nor $1$ is
an eigenvalue of $P$.  Suppose  that $\lambda_{2}=\lambda_{2}^{\max
}=2\lambda_{1}+n-1.$ Then (using the Bott formula)
$\mathbf{\Omega}_{\gamma}(0)=\lambda_{1\text{ \ }}$
and
$\mathbf{\Omega}_{\gamma}(-1)=\lambda_{1\text{ \ }}+n-1.$
It follows that all the eigenvalues of $P$ lie on the unit circle.
Furthermore
\[
 \lambda_{k}=\lambda_{k}^{\max}=\lambda_{1}+(k-1)(n-1)
\]
if and only if all the $k^{th}$ roots of unity except for $1$ lie to
the left of all the eigenvalues of $P$ on the unit circle. \ The statements
about strict inequality and equations (\ref{eqn-nomin}), (\ref{eqn-nomax}) follow.
This concludes the proof of Proposition \ref{prop-index-iterates}. \qed

The following lemma will be used in the proof of Theorem
\ref{prop-level-products}.  

\begin{lem}\label{lem-based-index}
Fix a basepoint $x_0 \in M$ and let $\Omega = \Omega_{x_0} = \ev_0^{-1}(x_0)$
be the Hilbert manifold of loops that are based at $x_0.$  Let $\gamma \in 
\Omega$ be a closed geodesic, all of whose iterates are nondegenerate.
Let $\lambda_r$ be the Morse index of the iterate $\gamma^r.$ \begin{remarks}
\item[(i) ] Suppose the
index growth is maximal for $r = n,$ that is, $\lambda_n = n\lambda_1 +
(n-1)^2.$  Then the index $\lambda_1^{\Omega}$ for $\gamma$ in the based loop
space is equal to the index $\lambda_1$ for $\gamma$ in the free loop space.
\item[(ii) ] Suppose the index growth is minimal for $r=n,$ that is, $\lambda_n =
n\lambda_1 -(n-1)^2.$  Then the difference between $\lambda_1$ and 
$\lambda_1^{\Omega}$ is maximal, that is, $\lambda_1 = \lambda_1^{\Omega} 
+n-1.$\end{remarks}
\end{lem}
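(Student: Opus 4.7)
The plan is to reduce the lemma to a linear-algebra question about the Poincar\'e map $P$ and then invoke the eigenvalue analysis already carried out in the proof of Proposition \ref{prop-index-iterates}.

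First, because $\ev_0:\L\to M$ is a submersion with fibre $\Omega$, the tangent space $T_\gamma\Omega$ is the kernel of $d\ev_0|_{T_\gamma\L}$ and hence a closed subspace of codimension $n$ in $T_\gamma\L$. The Hessian $d^2F(\gamma)$ restricts to $T_\gamma\Omega$ with Morse index $\lambda_1^\Omega$, so the standard extension lemma for symmetric bilinear forms produces a ``reduced'' quadratic form $Q$ on the $n$-dimensional quotient $T_\gamma\L/T_\gamma\Omega\cong T_{x_0}M$ with
\[\lambda_1 \;=\; \lambda_1^\Omega \;+\; \mathrm{ind}(Q).\]
The $S^1$-action on $\L$ is an $F$-preserving symmetry whose infinitesimal generator at $\gamma$ projects to $\dot\gamma(0)\in T_{x_0}M$, so $\dot\gamma(0)$ lies in the kernel of $Q$ and therefore $\mathrm{ind}(Q)\le n-1$. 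The lemma is thus equivalent to the assertions that in case (i) the form $Q$ is positive semidefinite, while in case (ii) it is negative definite on a codimension-one complement to $\mathbb R\cdot\dot\gamma(0)$.

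To compute $Q$, I would extend each $v\in T_{x_0}M$ to a variation of $\gamma$ built from Jacobi fields with prescribed initial value $v$; the failure of the extension to be periodic is governed by the linearized geodesic flow, i.e.\ by $P$ acting on the initial Jacobi data. A direct computation (parallel to the one underlying the Bott $\omega$-index) identifies $Q(v,v)$ with a signed symplectic pairing of the initial and final Jacobi data of the chosen extension, so the signature of $Q$ is fully encoded by the eigenvalue configuration of $P$ through a formula that agrees, for the generic unit-modulus eigenvalues, with the one used to read off the splitting numbers $S^\pm_P(\omega)$.

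Now I would invoke the endgame of the proof of Proposition \ref{prop-index-iterates}. In case (i), the hypothesis $\lambda_n=n\lambda_1+(n-1)^2$ together with nondegeneracy of every iterate forces all eigenvalues of $P$ to lie on the unit circle in the open sector bounded by $e^{\pm 2\pi i/n}$ and containing $1$; the symplectic-pairing formula for $Q$ is then positive semidefinite in this ``elliptic near identity'' configuration, giving $\mathrm{ind}(Q)=0$ and hence $\lambda_1^\Omega=\lambda_1$. In case (ii), the hypothesis $\lambda_n=n\lambda_1-(n-1)^2$ symmetrically confines the eigenvalues of $P$ to the complementary open sector bounded by $e^{\pm 2\pi i/n}$ and lying on the opposite side of the two boundary rays; the same formula then forces $Q$ to be negative definite transverse to $\dot\gamma(0)$, giving $\mathrm{ind}(Q)=n-1$ and hence $\lambda_1=\lambda_1^\Omega+n-1$. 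The main obstacle is Step 2: carrying out the Jacobi-field book-keeping that identifies $Q$ explicitly with a symplectic pairing involving $P$, and then verifying that the two sectorial eigenvalue conditions imply the claimed definiteness. This is essentially the splitting-number calculation of Proposition \ref{prop-index-iterates} repeated for the Dirichlet boundary condition $V(0)=V(1)=0$ in place of the Floquet conditions $V(1)=\omega V(0)$.
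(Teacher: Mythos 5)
Your route (an exact decomposition $\lambda_1=\lambda_1^{\Omega}+\mathrm{ind}(Q)$ with $Q$ on an $n$-dimensional quotient, followed by reading off the sign of $Q$ from the spectrum of the Poincar\'e map) is genuinely different from the paper's, but as written it has two real gaps. First, the ``standard extension lemma'' you invoke does not exist in the generality you need: the index of the Hessian on $T_\gamma\L$ equals the index on $T_\gamma\Omega$ plus the index of the form on a complement only when the restriction to $T_\gamma\Omega$ is (suitably) nondegenerate, and the hypotheses here do not guarantee that -- nondegeneracy of the iterates in $\L$ forbids periodic Jacobi fields other than $\dot\gamma$, but it does not forbid Jacobi fields with $V(0)=V(1)=0$ (i.e.\ $x_0$ conjugate to itself along $\gamma$), which is exactly the degenerate situation in which naive additivity fails (a hyperbolic plane restricted to an isotropic line already gives $0+0<1$). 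Making this step rigorous is the classical Morse--Bott concavity formula, whose proof is precisely the Jacobi-field bookkeeping you defer as ``the main obstacle,'' so the core of the argument is missing rather than reduced to something already proved. Second, your case~(ii) eigenvalue picture is wrong: if $\lambda_n=\lambda_n^{\min}$ then $|\mathbf{\Omega}_\gamma(e^{2\pi i/n})-\mathbf{\Omega}_\gamma(1)|=n-1$, and the same remark used in the proof of Proposition \ref{prop-index-iterates} forces \emph{all} eigenvalues of $P$ onto the unit circle with real part in $[\cos(2\pi/n),1]$ -- the same small arc about $1$ as in case~(i), not the complementary sector. Maximal and minimal growth are distinguished by the splitting numbers (Krein-type data), not by where the eigenvalues sit, so any definiteness criterion for $Q$ based on eigenvalue location alone cannot succeed.

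By contrast, the paper's proof is an elementary dimension count that bypasses both issues: choose a maximal negative subspace $W_r\subset T_{\gamma^r}^{\perp}\L$ and evaluate at the $r$ division points; the image has dimension at most $r(n-1)$, while the kernel of the evaluation on $T_{\gamma^r}^{\perp}\L$ is an orthogonal (with respect to the Hessian) sum of $r$ copies of $T_{\gamma}^{\perp}\Omega$, giving $r\lambda_1^{\Omega}\ge\lambda_r-r(n-1)$ as in (\ref{eqn-Kr}). With $r=n$, the hypothesis $\lambda_n=n\lambda_1+(n-1)^2$ together with $\lambda_1^{\Omega}\le\lambda_1$ and integrality yields (i), and the hypothesis $\lambda_n=n\lambda_1-(n-1)^2$ together with $\lambda_n\ge n\lambda_1^{\Omega}$ (concatenation of negative fields) and $\lambda_1^{\Omega}\ge\lambda_1-(n-1)$ yields (ii). If you want to salvage your approach, you would need to prove the concavity identity honestly and replace the sector criterion by an analysis of splitting numbers, at which point the argument is substantially longer than the one above.
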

\proof
Let $T_{\gamma(0)}^{\perp}M$ be the subspace that
is orthogonal to the tangent vector $\gamma'(0).$
Let $T_{\gamma}^{\perp}\L$ (resp. $T_{\gamma}^{\perp}
\Omega$) be the subspace of vector fields $V(t)$ along $\gamma$ with 
$V(t) \perp \gamma'(t)$ for all $t.$  By a standard argument, for all $r \ge 1,$
$\lambda_r$  (resp. $\lambda^{\Omega}_r$) is the dimension of a maximal negative subspace 
of $T_{\gamma^r}^{\perp}\L$ (resp. of $T_{\gamma^r}^{\perp}(\Omega)$).
Let $W_r \subset T_{\gamma^r}^{\perp}\L$ be a maximal
negative subspace.  Let $K_r$ be the kernel of the map
\vskip-20pt\begin{diagram}[size=2em]
 W_r &\rTo^{\phi} T_{\gamma(0)}^{\perp}M \times T_{\gamma(0)}^{\perp}M
\times \cdots \times T_{\gamma(0)}^{\perp}M & \\
V &\mapsto \left( V(0), V(\textstyle{\frac{1}{r}}), \cdots, 
V(\textstyle{\frac{r-1}{r}})\right).&
\end{diagram}
The dimension of the image of this map is $\le r(n-1)$ so $\dim(K_r) \ge
\lambda_r - r(n-1).$  On the other hand, the kernel of $\phi$ on
$T^{\perp}_{\gamma^r}\L$ is the direct sum of $r$ copies of 
$T^{\perp}_{\gamma}\Omega$ in a way that is compatible with the Hessian
of $F,$ so the index of $F$ on this kernel is $r\lambda_1^{\Omega}.$  Thus,
for all $r \ge 1$ we have,
\begin{equation}\label{eqn-Kr}
 r\lambda_1^{\Omega} \ge \dim(K_r) \ge \lambda_r - r(n-1).\end{equation}
Now consider Part (i). Clearly $\lambda_1^{\Omega} \le \lambda_1$ so we need to verify
the opposite inequality.  But taking $r=n$ and  $\lambda_n = n\lambda_1 + (n-1)^2$
in (\ref{eqn-Kr}) gives $\lambda_1^{\Omega} \ge \lambda_1 - \frac{n-1}{n}$ as claimed.

Now consider Part (ii).  Taking $r=1$ in (\ref{eqn-Kr}) gives $\lambda_1^{\Omega}
\ge \lambda_1-(n-1).$  However it is also clear that 
$\lambda_n \ge \lambda_n^{\Omega} \ge n\lambda_1^{\Omega}$ (because we can concatenate
$n$ negative vector fields along $\gamma$ to obtain a negative vector field along
$\gamma^n$).  Taking $\lambda_n = n \lambda_1 - (n-1)^2$ gives
\[\lambda_1 - \frac{(n-1)^2}{n} = \lambda_1 -(n-1) + \frac{n-1}{n} \ge \lambda_1^{\Omega}.
\qedhere\]
\quash{
\[ \lambda_1^{\Omega} \le \lambda_1 - \frac{(n-1)^2}{n} = \lambda_1 -(n-1) + \frac{n-1}{n}.
\qedhere \]
}

\section{Level nilpotence}\label{sec-nilpotence}
\subsection{}
  Let $R$ be a commutative ring with unit, let $a \in\mathbb R,$ and let
$\beta \in \H_i(\L^{\le a}, \L^{<a};R).$  We way that
$\beta$ is {\em level nilpotent} if there exists $m$ such that the
Chas-Sullivan product in level homology vanishes:
\[0= \beta^{*m} = \beta*\beta*\cdots*\beta \in \H_{mi+(m-1)n}(\L^{\le
ma},\L^{<ma};R).\]
Let $\alpha \in \H_i(\L;R).$  We say that $\alpha$ is {\em
level nilpotent} if there exists $m$ such that
$\Cr (\alpha^{*m}) < m \Cr(\alpha).$

\begin{lem}\label{lem-nilpotent}
  Let $\alpha \in \H_i(\L;R),$ $\alpha \ne 0,$ and let $a = \Cr(\alpha).$  Let
$\beta \in \H_i(\L^{\le a}, \L^{<a};R)$ be an associated class.  (A non-zero
such class $\beta$ exists when $a$ is a nondegenerate critical value in
the sense of Bott, cf. Lemma \ref{lem-implications}.)  If
$\beta$ is level nilpotent then $\alpha$ is also level nilpotent.
\end{lem}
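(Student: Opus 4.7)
The plan is to lift everything to the associating class and chase a diagram built out of the long exact sequence of the pair $(\L^{\le ma},\L^{<ma})$.

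First, by hypothesis (and the definition of ``associated'' in \eqref{eqn-associated}) there exists a class $\omega\in \H_i(\L^{\le a};R)$ whose image under $\H_i(\L^{\le a})\to \H_i(\L)=H_i(\L)$ is $\alpha$, and whose image under $\H_i(\L^{\le a})\to \H_i(\L^{\le a},\L^{<a})$ is $\beta$. I would then form the iterated Chas--Sullivan product
\[
\omega^{*m}\in \H_{mi+(m-1)n}(\L^{\le ma};R),
\]
using the filtered product constructed in Proposition \ref{prop-product-extended}. The compatibility of the $*$-product with the inclusions $\L^{\le c'}\to \L^{\le c}$ and with the passage to the quotient $(\L^{\le a},\L^{<a})$, which is the last sentence of that proposition, implies that $\omega^{*m}$ maps to $\alpha^{*m}\in H_{mi+(m-1)n}(\L;R)$ on the one hand, and to $\beta^{*m}\in \H_{mi+(m-1)n}(\L^{\le ma},\L^{<ma};R)$ on the other.

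Now suppose $\beta$ is level nilpotent, so that some power $\beta^{*m}$ vanishes in level homology. Then $\omega^{*m}$ lies in the kernel of $\H(\L^{\le ma})\to \H(\L^{\le ma},\L^{<ma})$, and the \v Cech long exact sequence of this pair (which holds because both spaces are compactly generated and we are using \v Cech homology, cf.\ the discussion preceding Lemma \ref{lem-cech}) produces a class $\omega'\in \H_{mi+(m-1)n}(\L^{<ma};R)$ whose image in $\H(\L^{\le ma})$ is $\omega^{*m}$. Pushing $\omega'$ further into $H_*(\L;R)$ recovers $\alpha^{*m}$, so $\alpha^{*m}$ is supported on $\L^{<ma}$.

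Finally, because $\L^{<ma}=\bigcup_{b<ma}\L^{\le b}$ is a nested union of closed sets and \v Cech homology commutes with direct limits (\S\ref{subsec-Cech-limits}), the class $\omega'$ comes from some $\H(\L^{\le b};R)$ with $b<ma$; hence $\alpha^{*m}$ is supported on $\L^{\le b}$ and $\Cr(\alpha^{*m})\le b<ma=m\Cr(\alpha)$, which is precisely the definition of level nilpotence for $\alpha$. The main bookkeeping issue is verifying that the $m$-fold filtered product and the three relevant maps (to $H_*(\L)$, to $\H(\L^{\le ma})$, and to level homology) form a commutative diagram; once that is in hand, the argument is just an exact-sequence chase, and the limit argument is routine given the commutation of \v Cech homology with the direct limit over the closed subspaces $\L^{\le b}$.
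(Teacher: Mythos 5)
Your overall strategy is the paper's: lift $\alpha$ and $\beta$ to an associating class $\omega\in\H_i(\L^{\le a})$, form $\omega^{*m}$ using the filtered product of Proposition \ref{prop-product-extended}, and observe that it associates $\alpha^{*m}$ with $\beta^{*m}$. The gap is in the step where you pass from $\beta^{*m}=0$ to a lift $\omega'\in\H_*(\L^{<ma})$ of $\omega^{*m}$. You justify this by asserting that the long exact sequence of the pair $(\L^{\le ma},\L^{<ma})$ holds ``because both spaces are compactly generated and we are using \v Cech homology.'' This is exactly backwards: \v Cech homology does \emph{not} in general satisfy the exactness axiom (see Appendix \ref{sec-homology}; the conditions recorded in \S\ref{subsec-Cech-limits} require the pair to be compact and the coefficients to be finite or a field, and $\L^{\le ma}$ is not compact, nor does ``compactly generated'' help). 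Exactness at the level $ma$ --- which may be a degenerate critical value even when $a$ is nice --- is precisely the delicate point, and it cannot be waved through.

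The paper avoids this by quoting Lemma \ref{lem-implications}(5): the statement ``$\alpha^{*m}$ is associated to the zero level class $\Rightarrow \Cr(\alpha^{*m})<ma$'' is proved there under the hypothesis that the critical value is nondegenerate in the sense of Bott (so that, via the finite-dimensional approximation, $\L^{\le ma}$ has the homotopy type of a finite complex, \v Cech agrees with singular homology by Lemma \ref{lem-cech}, and the pair sequence is exact) or that the coefficients form a field (Lemma \ref{lem-approximation}). So your conclusion is correct in the settings where the lemma is actually applied (isolated nondegenerate orbits, or field coefficients), but as written your exact-sequence step would fail for general $R$ and a general critical level; you need to either invoke Lemma \ref{lem-implications}(5) as the paper does, or reproduce its hypotheses and argument at level $ma$. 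Your final direct-limit step (descending from support on $\L^{<ma}$ to support on some $\L^{\le b}$, $b<ma$, using that any compact subset of $\L^{<ma}$ lies in a sublevel set) is fine and is consistent with \S\ref{subsec-Cech-limits}.
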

\proof
Let $\omega \in H_i(\L^{\le a};R)$ be a class which associates $\alpha$
and $\beta.$  Then $\omega^{*m}\in \H_b(\L^{\le ma};R)$ associates
$\alpha^{*m}$ and $\beta^{*m},$ where $b=mi+(m-1)n.$  But $\beta^{*m}=0$
if $m$ is sufficiently large, which implies by Lemma \ref{lem-implications}
that $\Cr(\alpha^{*m}) < ma.$
\qed

\begin{thm} \label{thm-nilpotence}
 Let $F:\L \to \mathbb R$ as above.  If $M$ is orientable let $G=
\mathbb Z$, otherwise let $G = \mathbb Z/(2).$  Suppose all the critical
orbits of $F$ are isolated and nondegenerate.  Then
every homology class $\alpha \in H_i(\L;G)$ is level nilpotent, and for
every $a \in \mathbb R,$ every level homology class
$\beta \in H_i(\L^{\le a},\L^{<a};G)$ is level nilpotent.
\end{thm}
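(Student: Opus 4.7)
\emph{Plan.} By Lemma \ref{lem-implications}(4) (applicable because every critical value of $F$ is nondegenerate in the sense of Bott), any non-zero $\alpha \in H_i(\L;G)$ with $a = \Cr(\alpha)$ admits a non-zero associated class $\beta \in H_i(\L^{\le a}, \L^{<a};G)$, and Lemma \ref{lem-nilpotent} reduces level nilpotence of $\alpha$ to that of $\beta$. So it suffices to prove the level-homology assertion. Under the nondegeneracy hypothesis, Proposition \ref{prop-fda} combined with Morse--Bott theory yields a direct-sum decomposition of $H_*(\L^{\le a}, \L^{<a};G)$ into local contributions from the finitely many critical $S^1$-orbits at level $a$, each concentrated in the two consecutive degrees $\lambda(O)$ and $\lambda(O)+1$.

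I first treat $\beta$ supported on a single orbit $O_\gamma = S^1 \cdot \gamma$. Using Proposition \ref{prop-product-extended} together with the explicit check that the $\phi_{\min}$-composition of two loops from $O_\gamma$ with matching basepoints is a reparametrization of $\gamma^2$, one sees inductively that $\beta^{*m}$ is supported on $O_{\gamma^m}$ and therefore lies in its local summand, concentrated in degrees $\lambda_m$ and $\lambda_m+1$ (where $\lambda_m$ is the Morse index of $\gamma^m$). The degree of $\beta^{*m}$ is $mi - (m-1)n$ with $i \in \{\lambda_1, \lambda_1+1\}$. When $i = \lambda_1$ this degree equals $\lambda_m^{\min} - (m-1)$, where $\lambda_m^{\min} = m\lambda_1 - (m-1)(n-1)$ is Bott's lower bound for $\lambda_m$; so the degree sits strictly below the local range already for $m=2$ and one obtains $\beta^{*2} = 0$. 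When $i = \lambda_1+1$ the degree equals exactly $\lambda_m^{\min}+1$, so $\beta^{*m}$ can survive only when $\lambda_m - \lambda_m^{\min} \in \{0,1\}$.

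To rule out this last possibility for $m$ large, I invoke Proposition \ref{prop-index-iterates}: nondegeneracy of $\gamma$ and $\gamma^2$ yields the strict inequality $\lambda_{av} > \lambda_1 - (n-1)$, while $\lambda_m - m\lambda_{av}$ remains bounded (being the discrepancy between the $m$-point Riemann sum of the step function $\mathbf{\Omega}_\gamma$ on $m$-th roots of unity and its integral). Hence
\[
\lambda_m - \lambda_m^{\min} \;=\; (\lambda_m - m\lambda_{av}) + m\bigl(\lambda_{av} - \lambda_1 + (n-1)\bigr) - (n-1)
\]
grows linearly in $m$ and eventually exceeds $1$, forcing $\beta^{*m} = 0$ at level $ma$ for all sufficiently large $m$.

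The \emph{main obstacle} is cross-term bookkeeping when $\beta = \sum_j \beta_j$ is supported on several critical orbits at level $a$: the expansion $\beta^{*m} = \sum \beta_{j_1}*\cdots*\beta_{j_m}$ contains mixed summands whose supports consist of concatenations of loops drawn from distinct orbits. Such a concatenation is a genuine closed geodesic at level $ma$ only when tangent vectors agree at every joining point, a codimension-$n$ condition that generically fails; when it fails the mixed support misses $\Sigma^{=ma}$ and can be pushed into $\L^{<ma}$ by the flow of $-\grad F$, killing the contribution at level $ma$. In the exceptional cases where matching does occur, the resulting critical orbit is itself isolated and nondegenerate by hypothesis, and the same Bott-based degree estimate applied to that orbit forces its contribution to vanish for $m$ large. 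Combined with the single-orbit analysis, this completes the proof.
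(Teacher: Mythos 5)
Your overall route is the one the paper takes: reduce to level classes via Lemma \ref{lem-nilpotent}, localize the level homology at the finitely many isolated nondegenerate orbits (whose local groups sit in the two degrees $\lambda_m,\lambda_m+1$), control the support of $\beta^{*m}$ by concatenations using Proposition \ref{prop-product-extended}, push the non-critical part of that support below level $ma$ with a brief $-\grad F$ flow, and then kill the class by comparing the degree $mi-(m-1)n$ of $\beta^{*m}$ with Bott's bounds. Your index-growth step is in fact more careful than the paper's terse appeal to ``strict inequality for $m$ large'': using the strict average-index inequality of Proposition \ref{prop-index-iterates} together with the boundedness of $\lambda_m-m\lambda_{av}$ to make the gap $\lambda_m-\lambda_m^{\min}$ grow linearly is exactly what is needed in the case $i=\lambda_1+1$, where strictness of (\ref{eqn-Bott-inequality1}) alone only gives a gap of $1$.

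There is, however, a genuine gap in your treatment of the cross terms. You allow for ``exceptional cases where matching does occur'' --- a concatenation of loops drawn from distinct critical orbits at level $a$ that happens to be a smooth closed geodesic at level $ma$ --- and you dispose of it by ``the same Bott-based degree estimate applied to that orbit''. That step is unjustified: Proposition \ref{prop-index-iterates} constrains the indices of the iterates of a single geodesic, and says nothing about the index of an arbitrary isolated nondegenerate geodesic of length $ma$ obtained by splicing different orbits; nothing would prevent such an orbit from having index $b-1$ or $b$, in which case its local summand need not vanish. The correct point (this is what the paper asserts when it says the only critical points in $(\Sigma_0^{=a})^{*m}$ are the iterates $\gamma_j^m$) is that your exceptional case is vacuous: each piece is a closed geodesic, so its initial and terminal velocities agree; if the composed PPAL loop is smooth at one junction, the next piece solves the same geodesic initial-value problem as the previous one and hence coincides with it as a parametrized loop, and inductively all $m$ pieces are equal, so the concatenation is an honest iterate $\gamma_j^m$. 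In particular no mixed concatenation is ever critical, and no separate argument is needed. The same observation also repairs your single-orbit claim that $\beta^{*m}$ is ``supported on $O_{\gamma^m}$'': when $\gamma$ has self-intersections the set $(O_\gamma)^{*m}$ is strictly larger than $O_{\gamma^m}$, but its only critical points lie in $O_{\gamma^m}$, so the brief $-\grad F$ flow you already invoke for the mixed terms places $\beta^{*m}$ in the local summand at $\overline{\gamma^m}$, which is all your degree count requires.
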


\proof  By Lemma \ref{lem-nilpotent} it suffices to prove that $\beta
\in H_i(\L^{\le a}, \L^{<a};G)$ is level nilpotent, where $a\in \mathbb R$
is a nondegenerate critical value.  We suppress mention of the coefficient
ring $G.$   The critical
set $\Sigma^{=a}= \Sigma(F) \cap F^{-1}(a)$ consists of the $S^1$ orbits of
finitely many closed geodesics, say, $\gamma_1,\cdots, \gamma_r.$  Let
$\overline{\gamma}$ denote the $S^1$ orbit of $\gamma.$  For $1
\le j \le r$ let $U_j \subset \L$ be a neighborhood of
$\overline{\gamma}_j,$ chosen  so that $U_j \cap U_k = \phi$ whenever
$j \ne k.$ We may choose the ordering so that there exists $s \le r$ such
that $H_i(\L^{\le a} \cap U_j, \L^{<a} \cap U_j)\ne 0$ if and
only if $1 \le j \le s.$  The $\gamma_j$ with $1 \le j \le s$ are the
critical points that are relevant to $\beta,$ and Theorem
\ref{thm-Morse-Bott} implies that the index of
$\gamma_j$ ($1 \le j \le s$) is either $i$ or $i-1.$

Set $\Sigma^{=a}_0 =\cup_{j=1}^s \overline{\gamma}_j.$    Using Proposition
\ref{prop-level}, the level homology group is a direct sum
\begin{equation*}
H_i(\L^{\le a}, \L^{<a})  \cong \oplus_{j=1}^r H_i(\L^{\le a}
\cap U_j, \L^{<a} \cap U_j)
\end{equation*}
Since these factors vanish for $j>s$ we have canonical isomorphisms
\begin{align*}
H_i(\L^{\le a}, \L^{<a})
&\cong \oplus_{j=1}^s H_i(U_j^{<a}\cup \overline{\gamma}_j, U_j^{<a})\\
&\cong H_i(\L^{<a} \cup \Sigma^{=a}_0, \L^{<a})
\end{align*}
using excision and homotopy equivalences.  By comparing the long exact
sequence for the pair $(\L^{\le a}, \L^{<a})$ with the long exact sequence
for the pair $(\L^{<a} \cup \Sigma^{=a}_0, \L^{<a})$ and using the five lemma, we conclude that
the
inclusion induces an
isomorphism
\[ H_i(\L^{<a}\cup \Sigma^{=a}_0) \cong H_i(\L^{\le a}).\]
Therefore $\beta$ is supported on $\L^{<a} \cup \Sigma^{=a}_0$
so for any $m \ge 1,$ $\beta^{*m}$ is supported on
\[ (\L^{<a} \cup \Sigma_0^{=a})^{*m} \subset \L^{<ma} \cup
(\Sigma_0^{=a})^{*m}.\]
The only critical points in $(\Sigma_0^{=a})^{*m}$ are the $m$-fold iterates
of the geodesics in $\overline{\gamma}_j$ (with $1 \le j \le s$).  Thus, using an arbitrarily
brief
flow along the
trajectories of $-\nabla F$ we obtain an isomorphism
\begin{equation}\label{eqn-target}
H_b(\L^{<ma}\cup (\Sigma_0^{=a})^{*m}, \L^{<ma}) \cong
\overset{s}{\underset{j=1}{\oplus}} H_b (U_{j,m}^{<ma} \cup
\overline{\gamma_j^m}, U_{j,m}^{<ma}) \ni \beta^{*m}
\end{equation}
where the $U_{j,m}$ are disjoint neighborhoods of the
$\overline{\gamma_j^m}$ containing no other critical points, and where
\[b = mi - (m-1)(n-1).\]
(There may be other critical points at level $ma$,
but the support of $\beta^{*m}$ does not contain such points.)

We will now
show that each of the summands on the right hand side of equation
(\ref{eqn-target}) vanishes if $m$ is sufficiently large.
Fix $j$ with $1 \le j \le s$ and let $\lambda_m$ denote the index of
$\gamma_j^m.$  According to Theorem \ref{thm-Morse-Bott},
$\lambda_1 \in \left\{ i-1,i \right\}.$  If the $j$-th summand in
(\ref{eqn-target}) is not zero then $\lambda_m \in \left\{b-1, b\right\}.$
But this contradicts the strict inequality in (\ref{eqn-Bott-inequality1}),
which holds if $m$ is sufficiently large.  \qed

\section{Cohomology products}\label{sec-cohomology-products}
\subsection{}\label{subsec-first-cohomology-product}
Throughout this section we take cohomology with coefficients in
the integers $\mathbb Z$ if $M$ is orientable (in which case we assume
an orientation has been chosen); otherwise in $\mathbb Z/(2).$  To
simplify the notation, we suppress further mention of these coefficients.
It is possible to define a product in cohomology, exactly along the same
lines as the Chas-Sullivan product in \S \ref{subsec-homology-product}, as
the composition
\begin{equation*}
 H^i(\Lambda) \times H^j(\L)\to H^{i+j}(\L \times \L) \to H^{i+j}(\F)
 \overset{\cong}{\longrightarrow} H^{i+j+n}(\L, \L-\phi_{\frac{1}{2}}(\F)) \to
H^{i+j+n}(\L)
\end{equation*}
(with $\phi_{\frac{1}{2}}: \F \to \L$ as in \S \ref{subsec-evaluation})
using the Thom isomorphism for the normal bundle of the figure
eight space $\phi_{\frac{1}{2}}(\F)\subset\L.$  However we will show below
that this product is zero when $i,j > n.$  Instead, we will construct a cohomology product,
\begin{equation}\label{eqn-cohomology-product}
\begin{diagram}[size=2em]
H^i(\L, \L_0) \times H^j(\L, \L_0) &\rTo^{\circledast}& H^{i+j+n-1}(\L, \L_0)
\end{diagram}
\end{equation}
using the following mapping
\[J: \Lambda \times [0,1] \to \Lambda\ \text{ given by }\
J(\alpha,s) = \alpha \circ \rh.\]  
Here, $\theta=\rh:[0,1] \to [0,1]$ is the reparametrization
function that is linear on $[0,\frac{1}{2}]$, linear on $[\frac{1}{2},1]$, has
$\theta(0)=0$, $\theta(1)=1$, and $\theta(\frac{1}{2}) = s,$ see Figure \ref{fig-theta}
and \S \ref{subsec-aboutJ}

\setlength{\unitlength}{0.7pt}

\begin{figure}[h!]\begin{center}
\begin{picture}(200,200)\label{fig-theta}
\linethickness{.7pt}
\put(20,20){\vector(1,0){170}} \put(195,15){$t$}
\put(20,20){\vector(0,1){170}} \put(18,195){$\theta$}
\put(100,20){\circle*{3}} \put(96,5){$\scriptstyle{\frac{1}{2}}$}
\put(5,175){$\scriptstyle{1}$} \put(177,5){$\scriptstyle{1}$}
\put(20,20){\circle*{3}} \put(20,180){\circle*{3}} \put(180,20){\circle*{3}}
\put(18,5){$\scriptstyle{0}$}  \put(5,18){$\scriptstyle{0}$}
\put(20,20){\line(4,1){80}}
\put(100,40){\line(4,7){80}}
\put(20,20){\line(4,7){80}}
\put(100,160){\line(4,1){80}}
\dottedline{5}(180,20)(180,180)\dottedline{5}(20,180)(180,180)
\put(120,60){$\scriptstyle{s=\frac{1}{8}}$}
\put(50,120){$\scriptstyle{s=\frac{7}{8}}$}
\end{picture}\end{center}\caption{Graph of $\rh$}
\end{figure}
Let $\F^{>0,>0} = (\L - \L_0)\times_M (\L - \L_0)$ denote the set of composable
pairs $(\alpha,\beta)$ such that $F(\alpha)>0$ and $F(\beta)>0.$
The relative Thom isomorphism (\ref{eqn-relative-Thom-cohomology}) for cohomology gives
\[ H^m(\F, \F - \F^{>0,>0}) \cong H^{m+n}(\L, \L -
\phi_{\frac{1}{2}}(\F^{>0,>0}).\]
The cohomology product is then the composition down the left hand column of
Figure \ref{fig-def-cohomology}.
Here, $\omega = (-1)^{j(n-1)}$, $I=[0,1]$,  $\F^{\bullet,0}
=\L \times_M\L_0$ and $\F^{0,\bullet} = \L_0 \times_M \L.$  The mapping $\tau$ is
the Thom isomorphism given by the cup product with the Thom class $\mu_{\F}$ of
the normal bundle of $\phi_{\half}\F$ in $\L,$ and $\kappa$ is given by the K\"unneth
theorem.  It uses the fact that $J(\L_0 \times [0,1])$ and $J(\L \times \left\{0,1\right\})$
are disjoint from $\phi_{\half}(\F^{>0,>0}).$ 
\begin{figure}[!h]
{\footnotesize\begin{diagram}[height=2em]
H^i(\L,\L_0)\times H^j(\L,\L_0) & \rTo & H^i(\L) \times H^j(\L) \\
\dTo_{\omega\times} &      &   \dTo_{\omega\times}  \\
H^{i+j}(\L\times\L,\L\times\L_0 \cup \L_0 \times \L)   & \rTo &
H^{i+j}(\L\times\L)\\
\dTo  &  &  \dTo \\
H^{i+j}(\F, \F^{\bullet,0} \cup \F^{0,\bullet}) & \rTo & H^{i+j}(\F) \\
\dEquals &  &  \dEquals \\
H^{i+j}\lp\F, \F- \F^{>0,>0}\rp &  \rTo  & H^{i+j}\lp\F\rp \\
\dTo^{\tau}_{(\ref{eqn-relative-Thom-cohomology})} & &
\dTo^{\tau}_{(\ref{eqn-Gysin-cohomology})} \\
H^{i+j+n}\lp\L, \L-\phi_{\half}(\F^{>0,>0})\rp & \rTo & H^{i+j+n}\lp\L,
\L - \phi_{\half}(\F)\rp \\
\dTo^{J^*}  & &  \dTo^{J^*} \\
H^{i+j+n}\lp\L \times I, \L \times \partial I \cup \L_0 \times I\rp
& \rTo^{\eta} & H^{i+j+n}(\L\times I)=H^{i+j+n}(\L) \\
\dTo^{\kappa}    &    &  \\
H^{i+j+n-1}(\L, \L_0)
\end{diagram}}
\caption{Definition of $\circledast$}\label{fig-def-cohomology}
\end{figure}

\bigskip

\noindent
 Denote the cohomology product
of two classes $\alpha,\beta\in H^*(\L,\L_0)$ by $\alpha\circledast\beta.$
The construction may be summarized as passing from the left to the right in
the following diagram.
\begin{equation*}
\boxed{\begin{diagram}[size=2em]
\L \times \L & \lTo& \F &\rTo& \L &\lTo^J& \L\times I
\end{diagram}
}\end{equation*}

\quash{ 
\begin{equation*}
\boxed{\begin{diagram}[size=2em]
(\L,\L_0)\times (\L,\L_0) &\lTo& (\F, \F - \F^{>0,>0}) &\rTo& 
(\L,\L-\F^{>0,>0}) &\lTo^J& (\L,\L_0)\times(I,\partial I) \end{diagram}
}\end{equation*}
}

In Figure \ref{fig-def-cohomology}, the first horizontal mapping is an isomorphism if
$i,j > n$ since $\Lambda_0 \cong M$ has dimension $n.$  The mapping $\eta$ is
part of the long exact sequence for the pair $(\L,\L_0) \times (I,\partial I)$
so it is zero.  It follows that the cohomology product defined in \S
\ref{subsec-first-cohomology-product}, which is the composition down the
right hand column, vanishes if $i,j > n.$

\subsection{Remarks} Although the finite dimensional approximation $\M_N$ of
Morse plays only a minor role in this paper, the cohomology product was first
discovered as a product defined on the level cohomology of the subspace
$\A_N \subset \M_N,$ consisting of loops in $\M_N$ for
which the $N$ pieces all have the same length, as follows. Using Poincar\'e
duality, $H^*(\A_N^{\le a}, \A_N^{<a}) \cong H_*(\A_N^{\ge a}, \A_N^{>a}).$
Then apply the (homology) Chas-Sullivan product, followed by Poincar\'e
duality again.  Although the space $\A_N$ is a smooth retraction of $\M_N$
in a neighborhood of the critical set, it has singularities and (what is worse),
spurious critical points, so it is preferable to work with $\M_N.$  When
the construction is translated to $\M_N$ it becomes the Chas-Sullivan product
on Poincar\'e duals, followed by multiplication by a one parameter family
of reparametrizations (the mapping $J$).  This
``level product" on $\M_N$ then extends to a product on $H^*(\M_N, \M_N^{=0})$
that is independent of $N$, and hence it extends to a product on 
$H^*(\L,\L_0).$

\begin{prop}\label{prop-cohomology-commutative}
The cohomology product $\circledast$ is associative.
If $x\in H^i(\L,\L_0)$ and $y \in H^j(\L,\L_0)$ then
\begin{equation}\label{eqn-cohomology-commutative}
y\circledast x = (-1)^{(i+n-1)(j+n-1)}x\circledast y.
\end{equation}\end{prop}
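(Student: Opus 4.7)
The plan is to prove commutativity by mimicking the proof of Proposition~\ref{prop-homology-signs} while tracking one additional sign arising from the interval $I$ in the bottom row of Figure~\ref{fig-def-cohomology}, and to prove associativity by interpreting both iterated triple products as a single ``three-leaf'' product parametrized by the $2$-simplex. For commutativity, let $\sigma:\L\times\L\to\L\times\L$ be the switching involution and $\tilde\sigma=\widehat\chi_{\half}:\L\to\L$, so that $\tilde\sigma\circ\phi_{\half}=\phi_{\half}\circ\sigma$ by (\ref{eqn-chi-switch}). The new ingredient is the involution
\[
R:\L\times I\longrightarrow \L\times I,\qquad R(\alpha,s)=\bigl(\widehat\chi_s\alpha,\;1-s\bigr),
\]
which satisfies the exact identity $\tilde\sigma\circ J=J\circ R$. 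This reduces to the piecewise-linear identity
\[
\theta_{\half\to s}\circ\chi_{\half}\;=\;\chi_s\circ\theta_{\half\to 1-s}
\]
of degree-one self-maps of $S^1$, both sides of which send $t\in[0,\half]$ to $s+2(1-s)t$ and $t\in[\half,1]$ to $2s(t-\half)$. The involution $R$ preserves the pair $(\L\times I,\,\L\times\partial I\cup\L_0\times I)$, because $\widehat\chi_s$ preserves $\L_0$ and $R$ interchanges $\L\times\{0\}$ with $\L\times\{1\}$; and through the pair-map homotopy $R_t(\alpha,s)=(\widehat\chi_{ts}\alpha,\,1-s)$ it is homotopic to $R_0(\alpha,s)=(\alpha,\,1-s)$.

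Since $s\mapsto 1-s$ reverses the orientation of $I$, the induced map on the K\"unneth summand $H^{i+j+n-1}(\L,\L_0)\otimes H^1(I,\partial I)$ is $\mathrm{id}\otimes(-1)$, so $\kappa\circ R^*=-\kappa$. Combining this with the two signs familiar from Proposition~\ref{prop-homology-signs}---the factor $(-1)^{ij}$ from $\sigma^*(x\times y)=(-1)^{ij}(y\times x)$ and the factor $(-1)^n$ from the naturality $\tau\circ\sigma^*=(-1)^n\tilde\sigma^*\circ\tau$ (which reflects that $\sigma$ acts as $-1$ on the normal bundle of the diagonal of $M\times M$)---and with the change of prefactor $\omega$ from $(-1)^{j(n-1)}$ to $(-1)^{i(n-1)}$, the diagram chase through Figure~\ref{fig-def-cohomology} produces total sign $(-1)^{(i+j)(n-1)+ij+n+1}$. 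This coincides modulo~$2$ with $(-1)^{(i+n-1)(j+n-1)}$ via the expansion $(i+n-1)(j+n-1)=ij+(i+j)(n-1)+(n-1)^2$ together with $(n-1)^2\equiv n+1\pmod 2$.

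For associativity, the plan is to define a single triple product $(x,y,z)\mapsto x\circledast y\circledast z\in H^{i+j+k+2(n-1)}(\L,\L_0)$ by a diagram chase using the triple fibered product $\L\times_M\L\times_M\L$---a codimension-$2n$ Hilbert submanifold of $\L^3$ admitting a Thom isomorphism by the same argument as in \S\ref{subsec-normal-bundle}---together with a two-parameter family $J_3:\L\times\Delta^2\to\L$ of piecewise-linear reparametrizations whose two interior breakpoints are moved over the $2$-simplex. The iterated product $(x\circledast y)\circledast z$ is computed by first applying $J$ over one copy of $I$ to form $x\circledast y$ and then applying $J$ over a second copy of $I$ to concatenate $z$; the resulting parameter space $I\times I$ can be identified with one of the two rectangles obtained by subdividing $\Delta^2$ along a median, while $x\circledast(y\circledast z)$ corresponds to the symmetric subdivision. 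A Fubini-style argument identifies both iterated products with the common triple product, yielding associativity.

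The main obstacle is the exact identity $\tilde\sigma\circ J=J\circ R$ and its two-parameter analogue used for associativity: these are piecewise-linear computations, complicated by the fact that the loop basepoint $t=0$ and the concatenation point $t=\half$ are distinct, so that $\widehat\chi_{\half}$ and $\theta_{\half\to s}$ fail to commute on $[0,1]$; the rotation $\widehat\chi_s$ appearing in $R$ is precisely the correction needed to make them agree after passing to degree-one self-maps of $S^1$. Once these geometric identities are in hand, everything else reduces to sign bookkeeping through the Thom isomorphism and K\"unneth, as indicated above.
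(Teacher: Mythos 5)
Your commutativity argument is correct and is essentially the paper's: your piecewise-linear identity is the paper's equation (\ref{eqn-chi-theta}) after the substitution $s\mapsto 1-s$, and your homotopy $J\circ R_t$ is exactly the paper's interpolating family $J_r$ precomposed with $(\gamma,s)\mapsto(\gamma,1-s)$; the sign bookkeeping agrees with the paper's chase.

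The associativity half, however, has a genuine gap. The ingredients are the right ones --- the paper's Appendix \ref{sec-associative} likewise uses the triple fibered product (the ``clover'' space, of codimension $2n$, with Thom class $TM\oplus TM$) and two-parameter families of broken-linear reparametrizations --- but your mechanism for comparing the two iterated products fails as stated. By the explicit formulas (cf. (\ref{eqn-theta1}) and (\ref{eqn-theta2})), $(x\circledast y)\circledast z$ is governed by the family whose interior breakpoints sit at $(st,\,t)$ and $x\circledast(y\circledast z)$ by the family with breakpoints at $(s,\,s+(1-s)t)$, for $(s,t)$ in the parameter square. In both cases the square maps \emph{onto} the entire region $\{0\le u\le v\le 1\}$ of breakpoint pairs, collapsing a different edge of the square to a vertex; the two iterated products are therefore not the restrictions of a single $\Delta^2$-parametrized triple product to complementary pieces of a subdivision, and there is no Fubini decomposition to invoke. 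What is actually required is a homotopy between these two square-parametrized families of reparametrizations, through families with the prescribed boundary behavior (constant on $[0,\frac{1}{3}]$, $[\frac{1}{3},\frac{2}{3}]$, or $[\frac{2}{3},1]$ on the appropriate faces of the square, as in Figure \ref{fig-boundary}), so that the pullbacks agree on the relative groups; the paper obtains this from the convexity of the space of admissible broken-linear reparametrizations, after carefully choosing the ``bad sets'' (clovers with at most one constant leaf) so that every map in the comparison is a map of pairs. Your sketch leaves precisely this step --- the only nontrivial one --- unproved, and the subdivision picture that was meant to replace it is not accurate.
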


\begin{proof}
First we prove that the product $\circledast$ is commutative.
As in \S \ref{prop-homology-signs} let $\sigma:\L\times\L \to \L \times \L$
switch the two factors and let $\widehat{\chi}_r:\L \to \L$ be the action of
$r \in S^1$ given by $\widehat{\chi}_r(\gamma) = \gamma\circ \chi_r$ where
$\chi_r(t) = r+t \mod 1.$  Then
\begin{equation}\label{eqn-chi-theta}
\chi_{1-s} \circ \rh = \theta_{\half\to(1-s)}\circ \chi_{\half}
\end{equation}
(which may be seen from a direct calculation).  For $0 \le r \le 1$ define
$J_r:\L \times I \to \L$ by
\[ J_r(\gamma,s) = \gamma \circ \chi_{r(1-s)} \circ \rh.\]
Then $J_r\left((\L \times \partial I) \cup (\Lambda_0 \times I)\right)
\subset \L - \phi_{\half}(\F^{>0,>0})$ because  when $s=0$ or $s=1$ the
loop $J_r(\gamma,s)$ stays fixed on either $[0,\half]$ or $[\half,1].$
Therefore the mapping $J^*$ in Figure \ref{fig-def-cohomology} may be
replaced by $J_r^*$ for any $r \in [0,1].$ However, $J_0 = J$ and \begin{align*}
J_1(\gamma,s) &= \gamma \circ \chi_{1-s} \circ \rh = \gamma \circ
\theta_{\half \to (1-s)} \circ \chi_{\half} \\
&= \widehat{\chi}_{\half}(J(\gamma, 1-s)).
\end{align*}
This means that $J_1$ reverses the $s\in I$ coordinate and it switches the
front and back half of any figure eight loop $\gamma \in \F,$ cf. equation
(\ref{eqn-chi-switch}). Consequently, if $i:\F \to \L \times \L$ denotes the
inclusion, we have:
\begin{align*}
(-1)^{i(n-1)}y\circledast x &= \kappa J_0^* (\mu_{\F} \cup i^*(y\times x))\\
&= (-1)^{ij}\kappa J_0^* (\mu_{\F} \cup i^* \sigma^*(x\times y))\\
&= (-1)^{ij}(-1)^n\kappa J_0^* \sigma^*(\mu_{\F} \cup i^*(x \times y)) \\
&= (-1)^{ij}(-1)^n\kappa J_1^*\sigma^*(\mu_{\F} \cup i^*(x\times y)) \\
&= (-1)^{ij}(-1)^n(-1) \kappa J_0^*(\mu_{\F}\cup i^*(x\times y))\\
&= (-1)^{ij+n+1}(-1)^{j(n-1)}x\circledast y.
\end{align*}
giving equation (\ref{eqn-cohomology-commutative}).
The proof of associativity can be found in Appendix \ref{sec-associative}.  \qed

\subsection{Based loop space and Pontrjagin product}\label{subsec-based}
Fix a base point $x_0 \in M$ and let $\Omega =
\Omega_{x_0} = \ev_0^{-1}(x_0)$ be the (based) loop space.  It is a Hilbert
submanifold of codimension $n = \dim(M)$ in $\L$ with a trivial normal bundle.
The Pontrjagin product
\begin{diagram}[size=2em]
\Omega \times \Omega & \rTo^{\bullet}& \Omega
\end{diagram}
(speed up by a factor of 2 and concatenate at time $1/2$) is an embedding.  
Denote its image (which is the ``based loops'' analog of the figure eight space)
by $\F_{\Omega}=\Omega\bullet\Omega.$  It is a Hilbert submanifold of $\Omega,$
with trivial $n$-dimensional normal bundle.  We obtain a fiber (or ``Cartesian'') 
square,
\begin{diagram}[size=2em]
\Omega \times \Omega & \rTo^{h\times h} & \L \times_M \L \\
\dTo_{\bullet} && \dTo_{\phi_{\half}}\\
\Omega & \rTo^{h}& \L.
\end{diagram}
It follows that the Pontrjagin product (which is not necessarily
commutative) and the Chas-Sullivan product are related (\cite{CS} Prop. 3.4) by
\begin{equation}\label{eqn-homology-Pont}
 h^!(a*b)=h^!(a)\bullet h^!(b) \end{equation}
for all $a,b \in H_*(\L),$ where $h^!:H_*(\L) \to H_{*-n}(\Omega)$
denotes the Gysin homomorphism (\ref{eqn-Gysin-homology}). 

A similar construction holds in cohomology.  We use the same
letter $J$ to denote its restriction,
$J:\Omega \times I \to \Omega$ with $I = [0,1].$

\begin{prop}\label{prop-based-cohomology}
Replacing $(\L,\L_0)$ by $(\Omega, x_0)$ in Figure 
\ref{fig-def-cohomology} gives a product
\begin{diagram}[size=2em]
H^i(\Omega,x_0) \times H^j(\Omega,x_0) & \rTo^{\circledast} &
H^{i+j+n-1}(\Omega,x_0) \end{diagram}
such that 
\[h^*(a\circledast b) = h^*(a) \circledast h^*(b)\]
for any $a\in H^i(\L,\L_0)$ and $b \in H^j(\L,\L_0).$
This product is often nontrivial.
Suppose $X,Y \subset \Omega$ are smooth compact oriented submanifolds of 
dimension $i,j$ respectively, and suppose $Z \subset \Omega$ is an oriented
compact submanifold of dimension $i+j+n-1$ such that the mapping
$J: Z \times I \to \Omega$ is transverse to $\F_{\Omega}=\Omega\bullet\Omega$
and such that 
\[J^{-1}(\F_{\Omega}) = (X \bullet Y) \times \textstyle{\left\{
\frac{1}{2} \right\}}.\]  Then
\[ \langle a\circledast b, Z \rangle = \langle a,[X] \rangle \cdot
\langle b, [Y] \rangle\]
for any $a\in H^i(\Omega,x_0)$ and $b \in H^j(\Omega,x_0).$
\end{prop}
\proof  The nontriviality of the product will be taken up in \S
\ref{sec-based-loop-coproduct-nontrivial}.
The following diagram is a fiber (or ``Cartesian'') square so the Thom class of $X\bullet Y$ in 
$Z \times I$ is $J^*(\mu_{\F})$:
\begin{diagram}[size=2em]
Z \times [0,1] & \rTo^J & \Omega & \rTo & \L \\
 \uTo && \uTo && \uTo_{\phi_{\textstyle\half}} \\
(X\bullet Y)\times \left\{\textstyle{\frac{1}{2}}\right\} & \rTo^J & \F_{\Omega}
& \rTo & \F
\end{diagram}
and the following diagram commutes. 
{\footnotesize
\begin{diagram}[size=2em]
H^i(X)\times H^j(Y) &\lTo & H^i(\Omega,x_0)\times H^j(\Omega,x_0) & \lTo & 
H^i(\L,\L_0) \times H^j(\L,\L_0)\\
\dTo && \dTo && \dTo  \\
H^{i+j}(X\times Y) & \lTo& H^{i+j}(\F_{\Omega}, 
\Omega \times x_0 \cup x_0 \times\Omega)
& \lTo & H^{i+j}(\L\times\L, \L \times\L_0 \cup \L_0 \times \L)\\
\dTo_{=} && \dTo_{=} && \dTo \\
H^{i+j}(X\bullet Y) & \lTo & H^{i+j}(\F_{\Omega}, 
\F_{\Omega} - \F_{\Omega}^{>0,>0}) & \lTo & H^{i+j}(\F, \F - \F^{>0,>0})\\
 && \dTo^{\cup \mu_{\F}}_{(\ref{rel-Gysin-cohomology})} && 
\dTo^{\cup \mu_{\F}}_{(\ref{rel-Gysin-cohomology})} \\
\dTo^{\cup J^*(\mu_{\F})}&& H^{i+j+n}(\Omega, \Omega - \F_{\Omega}^{>0,>0}) & \lTo & H^{i+j+n}(\L,\L-\F^{>0,>0})\\
 && \dTo_{J^*} && \dTo_{J^*}\\
H^{i+j+n}(Z\times I, Z \times \partial I) & \lTo & H^{i+j+n}(\Omega \times I, \partial(\Omega \times I)) & \lTo &
H^{i+j+n}(\L\times I, \partial(\L \times I))\\
\dTo_{\cong} &&\dTo_{\cong} && \dTo_{\cong}\\
H^{i+j+n-1}(Z) & \lTo & H^{i+j+n-1}(\Omega,x_0) & \lTo & H^{i+j+n-1}(\L,\L_0)
\end{diagram}  } 
Here, $\partial(\Omega \times I) = x_0 \times I \cup \Omega \times \partial I$ and
$\partial (\L \times I) = \L_0 \times I \cup \L \times \partial I.$ 
\qed

\begin{prop}\label{prop-coproduct-levels}
Let $0 \le a' < a \le \infty$ and $0 \le b' < b < \infty.$  Then the cohomology
product induces a family of compatible products
\begin{align}\label{eqn-co-product-level}
H^i\left(\L^{<a}, \L^{<a'}\right) \times H^j\left(\L^{<b}, \L^{<b'}\right)
&\overset{\circledast}{\longrightarrow} H^{i+j+n-1}\left(\L^{<c}, \L^{<c'}\right)\\
\H^i\left(\L^{\le a}, \L^{\le a'}\right) \times \H^j\left(\L^{\le b}, \L^{\le b'}\right)
&\overset{\circledast}{\longrightarrow} \H^{i+j+n-1}\left(\L^{\le c}, \L^{\le c'}\right)
\label{eqn-co-product2}\\
\H^{i}\left(\L^{\le a}, \L^{<a}\right) \times \H^j\left(\L^{\le b},
\L^{<b}\right) &\overset{\circledast}{\longrightarrow} \H^{i+j+n-1}\left(\L^{\le a+b},
\L^{<a+b}\right).\label{eqn-co-product3} \end{align}
where $c = \min(a+b',a'+b)$ and $c'=a'+b'.$  It is compatible with the homomorphisms
induced by inclusions $\L^{<e} \to \L^{<f}$ whenever $e \le f.$\end{prop}
The construction of the product (\ref{eqn-co-product-level}) will be taken up in the next
few sections.  The existence of (\ref{eqn-co-product2}) and 
(\ref{eqn-co-product3}) follows from (\ref{eqn-co-product-level}) 
and Appendix \ref{subsec-limits}.

\subsection{}\label{subsec-PPAL}
For technical reasons it is easiest to construct this product using the space
$\A$ of loops PPAL, rather than $\L.$ This choice allows us to work with the
mapping $\phi_{\half},$ which is an embedding, rather than more awkward
mapping $\phi_{\min}.$  It also allows us to keep
track of the effect of the function $J$ on the energy; see (\ref{eqn-J-energy}).
Recall (\S \ref{subsec-free-loop}) if
$\alpha \in \A$ is a loop parametrized proportionally to arclength
then $F(\alpha) = \sqrt{E(\alpha)} = L(\alpha)$ is its length.  Let $\F_{\A}
= \A \times_M \A$ be the associated figure eight space consisting of pairs
of composable loops, each parametrized proportionally to arclength.  Let
$\A_{\frac{1}{2}}$ be the set of loops $\alpha\in \L$ such that $\alpha|[0,
1/2]$ is PPAL and $\alpha|[1/2,1]$ is PPAL.
Similarly let
\begin{align*}
\F_{\A}^{<u,<v} &= \left\{ (\alpha,\beta)\in \A \times_M \A:\
L(\alpha)<u \ \text{and }\ L(\beta)<v.  \right\} \\
\A_{\frac{1}{2}}^{<u,<v} &= \left\{ \alpha \in \A_{\frac{1}{2}}:\
L(\alpha|[0,1/2])<u\ \text{and }\ L(\alpha|[1/2,1] < v. \right\}
\end{align*}
and similarly for $\A_{\half}^{=u,=v},$ etc.
Then the mapping $\phi_{\half}$ restricts to a closed embedding
\begin{equation}\label{eqn-A-phi}
 \phi_{\half}: \F_{\A}^{<u,<v} \to \A_{\half}^{<u,<v}.\end{equation}

\begin{lem}  The orientation of $M$ induces a Thom isomorphism
\begin{equation}\label{eqn-A-isomorphism}
 H^i(\F_{\A}^{<u,<v}) \cong H^{i+n}(\A_{\half}^{<u,<v}, \A_{\half}^{<u,<v} - \phi_{\half}(\F_{\A}^
{<
u,<v}))\end{equation}
for the image, $\phi_{\half}(\F_{\A}^{<u,<v}).$ If $Z \subset \F_{\A}^{<u,<v}$
is a closed subset then this restricts to a relative Thom isomorphism,
\begin{equation}\label{eqn-A-isomorphism2}
H^i(\F_{\A}^{<u,<v}, \F_{\A}^{<u,<v} - Z) \cong H^{i+n}(\A_{\half}^{<u,<v},
\A_{\half}^{<u,<v}-Z).  \end{equation}
\end{lem}
\proof
 The space $\phi_{\half}(\F_{\A}^{<u,<v})$ may be described as the pre-image of the diagonal under
the
mapping
\[ (\ev_0,\ev_{\half}):\A_{\half}^{<u,<v} \to M\times M.\]
If $\A_{\half}$ were a Hilbert manifold, this would imply the existence of a normal
bundle and tubular neighborhood for  $\phi_{\half}(\F_{\A}^{<u,<v})$ in
$\A_{\half}^{<u,<v}.$  Unfortunately $\A_{\half}$ is probably not a Hilbert manifold,
and even though it may be a Banach manifold, we do not know of any standard reference
for the existence of tubular neighborhoods which can be applied in this setting.
However we have the following commutative diagram, where the vertical maps are homotopy
equivalences, cf. Proposition \ref{prop-PPAL-space}:
\begin{diagram}[height=1.5em]
\phi_{\half}(\F^{<u,<v}) & \rTo & \L_{\half}^{<u,<v}&     & \\
                        &      &                    &\rdTo&  \\
\dTo^Q\uTo                &      &\dTo^Q\uTo         &     &\ M\times M \\
                        &      &                    &\ruTo&  \\
\phi_{\half}(\F_{\A}^{<u,<v}) &\rTo&\A_{\half}^{<u,<v}&   &
\end{diagram}
Here, $\L_{\half}^{<u,<v}$ denotes the set of all $\alpha \in \L$ such that
$E(\alpha_1) < u^2$ and $E(\alpha_2) < v^2$ where $\alpha_1(t) = \alpha(2t)$
($0 \le t \le 1/2$) and $\alpha_2(t) = \alpha(2t-1)$ ($1/2 \le t \le 1$).
(In other words, if $\alpha|[0,1/2]$ and
$\alpha|[1/2,1]$ are both expanded into paths defined on $[0,1]$ then their respective
energies are bounded by $u^2$ and $v^2$.)  Since $\L$ is a Hilbert manifold the same
holds for $\L_{\half}^{<u,<v}$ hence $\phi_{\half}(\F^{<u,<v})$ has a tubular
neighborhood and normal bundle in $\L^{<u,<v}$ and we have a Thom isomorphism
\[ H^i(\F^{<u,<v}) \cong H^{i+n}(\L^{<u,<v}, \L^{<u,<v} - \phi_{\half} (\F^{<u,<v})).\]
The vertical homotopy equivalence in this diagram assigns to any $\alpha \in \L_{\half}^{<u,<v}$
the
same curve but with $\alpha|[0,1/2]$ reparametrized proportionally to arclength and with
$\alpha|[1/2,1]$ similarly reparametrized.  It restricts to a homotopy equivalences
$\phi_{\half}(\F^{<u,<v}) \to \phi_{\half}(\F_{\A}^{<u,<v})$ and also
\[ \L_{\half}^{<u,<v} - \phi_{\half}(\F^{<u,<v}) \to \A_{\half}^{<u,<v} -
\phi_{\half}(\F_{\A}^{<u,<v})\]
because the points $\alpha(0), \alpha(1/2)$ are fixed.  The Thom isomorphism
(\ref{eqn-A-isomorphism}) follows.  The relative Thom isomorphism
(\ref{eqn-A-isomorphism2}) follows as in Proposition \ref{prop-Thom}.\qed

\subsection{}\label{subsec-aboutJ}
Let $I=[0,1]$ denote the unit interval.
The mapping $J:\L \times I\to\L$ restricts to a mapping
$J_{\A}:\A \times I \to \A_{\half}$ which factors through the quotient
$\A \times I/(\A_0 \times I)$ that is obtained from $\A \times I$ by
identifying $\A_0 \times I$ to a point.  The resulting mapping
\[ \A \times I/(\A_0\times I) \to \A_{\half}\]
is a homeomorphism, and in fact the inverse mapping
can be described as follows.  Let $\alpha \in \A_{\half} - \A_0.$
Let $L_0$ denote the length of the segment $\alpha|[0,\half]$ (which is PPAL) and
let $L_1$ denote the length of the segment $\alpha|[\half,1].$  Set $s = L_0/(L_0+L_1).$
Assume for the moment that $0<s<1.$
Let $\hr= (\rh)^{-1}$ be the inverse function to
$\rh;$ it is linear on $[0,s]$, linear on $[s,1]$, and takes the values
$\hr(0)=0$, $\hr(s)= \half$, $\hr(1)=1.$ Then
$\alpha \circ \hr$ is PPAL throughout the interval $[0,1]$
so we may set
\[ J_{\A}^{-1}(\alpha) = (\alpha \circ \hr, s).\]
If $s=0$ (resp. $s=1$) then this formula still makes sense because in this
case, the loop $\alpha$ will be constant on $[0,\half]$ (resp. on $[\half,1]$).
If $\alpha_0,\alpha_1 \in \A$ are composable loops, not both constant,
then the composed loop $\phi_{\half}(\alpha_0,\alpha_1)\in \A_{\half}$ and
\begin{equation}\label{eqn-K}
J_{\A}^{-1}(\phi_{\half}(\alpha_0,\alpha_1)) =(\phi_s(\alpha_0,\alpha_1),s) =
(\phi_{\min}(\alpha_0,\alpha_1),s)
\end{equation}
where $s= L(\alpha_0)/(L(\alpha_0)+L(\alpha_1))$ is the unique energy minimizing
value, cf. Lemma \ref{lem-phi}.

It follows that {\em if $A,B \subset \A$ then the mapping
$J_{\A}:\A \times [0,1] \to \A_{\half}$ takes}
\begin{equation}\label{eqn-J-sets}
 (\A - A*B) \times [0,1] \ \text{ into }\ \A_{\half} - \phi_{\half}(A \times_MB).
\end{equation}
For $J(\A_0 \times [0,1]) = \A_0$ which is contained in the right hand side, so it suffices
to check that $J(A*B \times [0,1]) \supset \phi_{\half}(A \times_M B),$
which follows from (\ref{eqn-K}).

Similarly the mapping $J_{\A}$ satisfies
\begin{equation}\label{eqn-J-energy}
J_{\A}(\alpha,s) \in \A_{\half}^{=sL(\alpha),=(1-s)L(\alpha)}
\end{equation}
which means the following: if we express $J_{\A}(\alpha,s) =
\phi_{\half}(\beta_1,\beta_2)$ as a composition of two (not necessarily
closed) paths $\beta_1,\beta_2,$ each PPAL, and joined at time $1/2,$ then
$L(\beta_1)=sL(\alpha)$ and $L(\beta_2) = (1-s)L(\alpha).$

\subsection{}Define
\begin{align*} T^{<a,<b} &= J_{\A}^{-1}\left( \F_{\A}^{<a,<b} \right) \\
&= \left\{(\alpha,s) \in \A\times I:\
sL(\alpha) <a \text{ and } (1-s)L(\alpha) < b \right\} \\
T^{[a',a),[b',b)}&=
J_{\A}^{-1}\left( \F_{\A}^{[a',a),[b',b)}\right)\\
&= \left\{(\alpha,s)\in \A_{\half}\times I:\
a'\le sL(\alpha)<a, \text{ and } b'\le (1-s)L(\alpha) < b\right\}. \end{align*}
Figure \ref{fig-T-space} consists of three diagrams of $L=\sqrt{E}$ versus
$s \in [0,1]$ illustrating the
curves $sL=a$ and $(1-s)L=b$ that occur in the definition of $T^{<a,<b}.$  These
curves intersect at the point with coordinates $s = a/(a+b)$ and $L = a+b.$  The
diagrams on the right illustrate the corresponding regions for
$T^{[a'.a),[b',b)}.$  (The interval $I'$ is defined in the next paragraph,
\S \ref{subsec-construction-coprod}.)

\medskip
\setlength{\unitlength}{1.1pt}
\newcommand{\q}{\qbezier}
\newsavebox{\curvea}\newsavebox{\curveb}\newsavebox{\curvec}
\newsavebox{\curved}
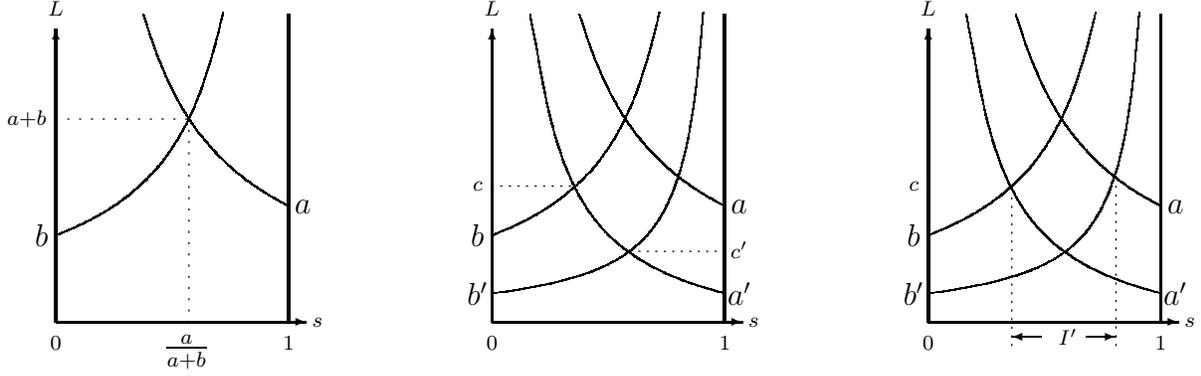
\begin{figure}[h!]\begin{center}
\begin{picture}(385,110)(0,-10)
\linethickness{.9pt}
\multiput(0,0)(150,0){3}{\line(0,1){100}}  
\multiput(0,0)(150,0){3}{\line(1,0){80}}
\multiput(80,0)(150,0){3}{\line(0,1){106}}
\multiput(0,0)(150,0){3}{\vector(1,0){86}}
\multiput(90,0)(150,0){3}{\makebox(0,0){$\scriptstyle{s}$}}
\multiput(0,0)(150,0){3}{\vector(0,1){101}}
\multiput(0,108)(150,0){3}{\makebox(0,0){$\scriptstyle{L}$}}
\multiput(0,-7)(150,0){3}{\makebox(0,0){$\scriptstyle{0}$}}
\multiput(80,-7)(150,0){3}{\makebox(0,0){$\scriptstyle{1}$}} 
\linethickness{.25pt}
\savebox{\curvea}(80,100)[bl]{
\q(0,30)(5,32)(10,34.29)
\q(5,32)(10,34.29)(15,36.93)
\q(10,34.29)(15,36.93)(20,40)
\q(15,36.93)(20,40)(25,43.62)
\q(20,40)(25,43.62)(30,48)
\q(25,43.62)(30,48)(35,53.1)
\q(30,48)(35,53.1)(37.5,56.46)
\q(35,53.1)(37.5,56.46)(40,60)
\q(37.5,56.46)(40,60)(42.5,64)
\q(40,60)(42.5,64)(45,68.55)
\q(42.5,64)(45,68.55)(47.5,73.83)
\q(45,68.55)(47.5,73.83)(50,79.98)
\q(47.5,73.83)(50,79.98)(52.5,87.87)
\q(50,79.98)(52.5,87.87)(55,96)
\q(52.5,87.87)(55,96)(57.5,106.65)
} 

\savebox{\curveb}(80,100)[bl]{%
\q(0,10)(5,10.6)(10,11.43)
\q(5,10.6)(10,11.43)(15,12.31)
\q(10,11.43)(15,12.31)(20,13.33)
\q(15,12.31)(20,13.33)(25,14.54)
\q(20,13.33)(25,14.54)(30,16)
\q(25,14.54)(30,16)(35,17.7)
\q(30,16)(35,17.7)(40,20)
\q(35,17.7)(37.5,18.82)(40,20)
\q(37.5,18.82)(40,20)(42.5,21.33)
\q(40,20)(42.5,21.33)(45,22.85)
\q(42.5,21.33)(45,22.85)(47.5,24.61)
\q(45,22.85)(47.5,24.61)(50,26.66)
\q(47.5,24.61)(50,26.66)(52.5,29.09)
\q(50,26.66)(52.5,29.09)(55,32)
\q(52.5,29.09)(55,32)(57.5,35.55)
\q(55,32)(57.5,35.55)(60,40)
\q(57.5,35.55)(60,40)(62.5,45.71)
\q(60,40)(62.5,45.71)(65,53.3)
\q(62.5,45.71)(65,53.3)(66,57.14)
\q(65,53.3)(66,57.14)(67,61.54)
\q(66,57.14)(67,61.54)(68,66.66)
\q(67,61.54)(68,66.66)(69,72.72)
\q(68,66.66)(69,72.72)(70,80)
\q(69,72.72)(70,80)(71,88.88)
\q(70,80)(71,88.88)(72,100)
\q(71,88.88)(72,100)(72.5,106.66)} 

\savebox{\curvec}(80,100)[bl]{
\q(30,106)(31,103.22)(32,100)
\q(31,103.22)(32,100)(33,96.96)
\q(32,100)(33,96.96)(34,94.11)
\q(33,96.96)(34,94.11)(35,91.43)
\q(34,94.11)(35,91.43)(37.5,85.33)
\q(35,91.43)(37.5,85.33)(40,80)
\q(37.5,85.33)(40,80)(42.5,75.29)
\q(40,80)(42.5,75.29)(45,71.11)
\q(42.5,75.29)(45,71.11)(47.5,67.37)
\q(45,71.11)(47.5,67.37)(50,64)
\q(47.5,67.37)(50,64)(55,58.18)
\q(50,64)(55,58.18)(60,53.33)
\q(55,58.18)(60,53.33)(65,49.23)
\q(60,53.33)(65,49.23)(70,45.71)
\q(65,49.23)(70,45.71)(75,42.66)
\q(70,45.71)(75,42.66)(80,40)} 

\savebox{\curved}(80,100)[bl]{
\q(13,106)(15,96)(17.5,81.43)
\q(15,96)(17.5,81.43)(20,70)
\q(17.5,81.43)(20,70)(22.5,61.11)
\q(20,70)(22.5,61.11)(25,54)
\q(22.5,61.11)(25,54)(27.5,48.18)
\q(25,54)(27.5,48.18)(30,43.33)
\q(27.5,48.18)(30,43.33)(32.5,39.23)
\q(30,43.33)(32.5,39.23)(35,35.71)
\q(32.5,39.23)(35,35.71)(37.5,32.66)
\q(35,35.71)(37.5,32.66)(40,30)
\q(37.5,32.66)(40,30)(42.5,27.65)
\q(40,30)(42.5,27.65)(45,25.55)
\q(42.5,27.65)(45,25.55)(50,22)
\q(45,25.55)(50,22)(55,19.09)
\q(50,22)(55,19.09)(60,16.66)
\q(55,19.09)(60,16.66)(65,14.61)
\q(60,16.66)(65,14.61)(70,12.85)
\q(65,14.61)(70,12.85)(75,11.33)
\q(70,12.85)(75,11.33)(80,10)} 
\put(0,0){\usebox{\curvea}}
\put(0,0){\usebox{\curvec}}
\dottedline{4}(0,70)(45,70)
\dottedline{4}(45.7,70)(45.7,0)
\put(-5,30){\makebox(0,0){$b$}}
\put(85,40){\makebox(0,0){$a$}}
\put(-10,70){\makebox(0,0){$\scriptstyle{a+b}$}}
\put(45,-10){\makebox(0,0){$\frac{a}{a+b}$}}
\put(150,0){\usebox{\curvea}} \put(150,0){\usebox{\curvec}}
\put(150,0){\usebox{\curveb}} \put(150,0){\usebox{\curved}}
\put(145,30){\makebox(0,0){$b$}} \put(145,10){\makebox(0,0){$b'$}}
\put(235,40){\makebox(0,0){$a$}} \put(235,10){\makebox(0,0){$a'$}}
\dottedline{3}(150,47)(177,47) \put(145,47){\makebox(0,0){$\scriptstyle{c}$}}
\dottedline{3}(197.5,24.5)(230,24.5) \put(235,25){\makebox(0,0){$\scriptstyle{c'}$}}
\put(300,0){\usebox{\curvea}}
\put(300,0){\usebox{\curveb}}
\put(300,0){\usebox{\curvec}}
\put(300,0){\usebox{\curved}}
\dottedline{3}(328.7,-8)(328.7,46) 
\dottedline{3}(364.5,-8)(364.5,50) 
\put(338.7,-5){\vector(-1,0){10}} \put(354.4,-5){\vector(1,0){10}}
\put(348,-5){\makebox(0,0){$\scriptstyle{I'}$}}
\put(295,47){\makebox(0,0){$\scriptstyle{c}$}}
\put(295,30){\makebox(0,0){$b$}}\put(385,40){\makebox(0,0){$a$}}
\put(295,10){\makebox(0,0){$b'$}}\put(385,10){\makebox(0,0){$a'$}}
\end{picture}

\end{center}\label{fig-T-space}

\caption{The regions $T^{<a,<b}$ and $T^{[a',a),[b',b)}$}\end{figure}

\subsection{}\label{subsec-construction-coprod}
The product (\ref{eqn-co-product-level}) is constructed in several steps.
First, use the cross product,
\[
H^i(\A^{<a},\A^{<a'}) \times H^j(\A^{<b}, \A^{<b'}) \to
H^{i+j}(\A^{<a} \times \A^{<b}, \A^{<a}\times \A^{<b'} \cup \A^{<a'}\times \A^{<
b})\]
then restrict to
\[H^{i+j}(\F_{\A}^{<a,<b}, \F_{\A}^{<a,<b'}\cup \F_{\A}^{<a',<b})
=
H^{i+j}(\F_{A}^{<a,<b}, \F_{\A}^{<a,<b} - \F_{\A}^{[a',a),[b',b)}).\]
Using the embedding (\ref{eqn-A-phi}) and the Thom isomorphism we arrive at
\begin{align*}
&H^{i+j}\left(
\phi_{\half}(\F_{\A}^{<a,<b}),
\phi_{\half}(\F_{\A}^{<a,<b}) -
\phi_{\half}(\F_{\A}^{[a',a),[b',b)})\right) \\ \cong
&H^{i+j+n}\left(\A_{\half}^{<a,<b}, \A_{\half}^{<a,<b} - \phi_{\half}
(\F_{\A}^{[a',a),[b',b)}) \right).\end{align*}
Pulling back under $J_{\A}$ gives a class in
\begin{equation}\label{eqn-Tspace}
H^{i+j+n}\left( T^{<a,<b}, T^{<a,<b} - T^{[a',a),[b',b)}.
\right)\end{equation}
Let $I'$ denote the interval $\left[ \frac{a'}{a'+b}, \frac{a}{a+b'} \right].$
Then $\A^{<c} \times I' \subset T^{<a,<b}$ and
\[ ( \A^{<c} \times \partial I' ) \cup ( \A^{<c'} \times I' )
\subset T^{<a.<b} - T^{[a',a),[b',b)} \]
where $c = \min(a+b',a'+b)$ and $c'=a'+b'.$  In other words, the mapping $J_{\A}$
restricts to a map of pairs,
\begin{equation}\label{eqn-JA-pairs}
J_{\A}:(\A^{<c},\A^{<c'})\times(I', \partial I') \to
\left( \phi_{\half}\left(\F_{\A}^{<a,<b}\right),
\phi_{\half}\left(\F_{\A}^{<a,<b}\right)-\phi_{\half}\left(
\F_{\A}^{[a',a),[b',b)} \right) \right)
\end{equation}
Therefore the class in (\ref{eqn-Tspace}) pulls back to a class in
\[ H^{i+j+n}\left( (\A^{<c}, \A^{<c'}) \times (I', \partial I') \right)
\cong H^{i+j+n-1}\left( \A^{<c}, \A^{<c'}\right) \]
as claimed.  This completes the proof of Proposition
\ref{prop-coproduct-levels}\qed

Taking $a,b = \infty$ and $a',b'=0$ gives an equivalent construction of the
cohomology product $\circledast$ using the space $\A$ rather than $\L.$  This
will also be important in the next section.

\section{Support and critical levels}\subsection{}
As in the previous sections, we take cohomology with coefficients in
$G = \mathbb Z$ if $M$ is orientable, or $G = \mathbb Z/(2)$ otherwise,
but we suppress mention of $G$ in our notation for cohomology. 
Proposition \ref{prop-coproduct-levels} gives:
\begin{prop}\label{prop-coprod-critical}
If $\alpha,\beta \in H^*(\L,\L_0)$ then
$\Cr(\alpha \circledast \beta) \ge \Cr(\alpha) + \Cr(\beta).$ \qed
\end{prop}

As in \S \ref{subsec-free-loop} and \S \ref{subsec-PPAL} let $\A$ be the
set of loops parametrized
proportionally to arclength, let $\A_0 = \L_0$ be the constant loops and let
$\A_{\half}\subset \L$ be the collection of those loops which are PPAL on
$[0,1/2]$ and are PPAL on $[1/2,1].$  We have continuous mappings
\[J_{\A}:\A \times [0,1] \to \A_{\half},\quad
\phi_{\half}:\A \times_M \A \to \A_{\half},\ \text{ and }\
\phi_{\min}:\A \times_M\A \to \A \subset \A_{\half}.\]
\begin{prop}\label{prop-cohomology-supports}
Suppose $\alpha \in H^i(\A,\A_0)$ is supported on a closed set $A \subset \A - \A_0.$
Suppose $\beta \in \H^j(\A,\A_0)$ is supported on a closed set $B \subset \A - \A_0.$
Then $\alpha \circledast \beta$ is supported on the closed set $A*B =
\phi_{\min}(A \times_MB)\subset \A - \A_0.$
\end{prop}

\proof
Using (\ref{eqn-J-sets}) we obtain a cohomology product
\[ H^i(\A, \A - A) \times H^j(\A, \A - B) \to H^{i+j+n-1}(\A, \A - A*B)\]
as the composition
\begin{diagram}[size=2em]
H^{i+j}(\A \times \A, \A \times \A - A \times B) &\rTo&
H^{i+j}(\F_{\A}, \F_{\A} - A \times_M B)\\
&& \dTo_{\phi_{\half}}^{(\ref{eqn-A-isomorphism2})}\\
H^{i+j+n}( (\A, \A - A*B) \times (I, \partial I)) &\lTo^{J_{\A}^*}&
H^{i+j+n} (\A_{\half}, \A_{\half}-\phi_{\half}(A \times_M B))
\end{diagram}
where $I = [0,1].$  \qed

\subsection{} \label{subsec-cup}
We remark that the analogous statement for the cup product in
cohomology says that $\alpha \smile \beta$ is supported on the intersection
$A \cap B.$  In particular, for any $0 \le a' < a \le \infty$
and $0 \le b' <b \le \infty$ the cup product gives mappings (with coefficients
in $\mathbb Z$),
\begin{equation*}
H^i(\L^{\le a}, \L^{\le a'})\times H^j(\L^{\le b}, \L^{\le b'})  \to
H^{i+j}(\L^{\le \min(a,b)}, \L^{\le \max(a',b')}).
\end{equation*}

\subsection{On a question of Eliashberg}\label{subsec-Eliashberg}
In a lecture at Princeton University in June 2007, Y. Eliashberg asked the following
question.  Let $M$ be a smooth compact Riemannian manifold and let $\L$ be its
free loop space.  Given $0 < t \in \mathbb R$ let $d(t)$ be the maximal degree of an
essential homology class at level $t,$ that is,
\begin{equation}\label{eqn-d}
 d(t) = \max \left\{ k:\ \text{Image}\left(H_k(\L^{\le t},\mathbb Q) \to
H_k(\L;\mathbb Q)\right) \ne 0 \right\}.\end{equation}
Does there exist a constant $C \in \mathbb R,$ independent of the metric, so 
that for all $t_1,t_2 \in \mathbb R^{+}$ the following holds:
\[ d(t_1+t_2) \le d(t_1) + d(t_2) + C?\]
Inequalities in the opposite direction are known.  If the cohomology ring
$(H^*(\L,\L_0;\mathbb Q), \circledast)$ is finitely generated, then we are 
able to give an affirmative answer to this question.  Moreover, in 
\S \ref{sec-coH-closed} we show that
the cohomology ring (with rational coefficients) is indeed finitely generated 
if $M$ is orientable and admits a metric in which all geodesics are closed. 
(If the orientability condition is dropped then the cohomology ring with 
$\mathbb Z/(2)$ coefficients is finitely generated.)  This includes the case 
of spheres and projective spaces.

\begin{thm}  Let $M$ be a smooth Riemannian $n$-dimensional manifold. Fix a 
coefficient field $G,$ and for $t \in \mathbb R^{+}$ define $d(t) = d(t;G)$ by 
(\ref{eqn-d}), but replacing the coefficients $\mathbb Q$ with the field $G.$  
Assume the cohomology ring $(H^*(\L,\L_0;G),\circledast)$ is finitely generated, 
with all generators having degree $\le g.$  If $t_1,t_2 \in \mathbb R^{+}$ then
\begin{equation}\label{eqn-Eliashberg}
d(t_1+t_2) \le d(t_1) + d(t_2) + 2n+g-2. \end{equation}
\end{thm}
\proof 
Let $t_1,t_2, t_3\in \mathbb R^{+}$ and let $d_i=d(t_i;G)$ for $i = 1,2,3.$  We
will show:  if
\begin{equation}\label{eqn-d3}
d_3 > d_1 + d_2 + 2n + g -2\end{equation}
then $t_3 > t_1 + t_2.$  From the definition (\ref{eqn-d}) and since 
$d_3>n= \dim(\L_0),$  there exist non-zero homology classes $z'$ and $z$, with:
\begin{diagram}[size=2em]
H_{d_3}(\L^{\le t_3}, \L_0) & \rTo_{i_*} & H_{d_3}(\L,\L_0) \\
   z' & \rTo & z = i_*(z')
\end{diagram}
Let $Z\in H^{d_3}(\L,\L_0)$ be a cohomology class with non-zero Kronecker product,
\begin{equation}\label{eqn-Kronecker}
\langle Z, z \rangle \ne 0.  \end{equation}
The class $Z$ is a sum of products of generators of the ring $\left(
H^*(\L,\L_0), \circledast \right),$ and at least one term in this sum has a
non-zero Kronecker product with $z.$  Replacing $Z$ by this term, it may be
expressed as a product of generators,
\begin{equation}\label{eqn-decompose-Z}
Z = U_1 \circledast U_2 \circledast \cdots \circledast U_q \end{equation}
with each $\deg(U_i) \le g.$

We claim there exist $X,Y \in H^*(\L,\L_0)$ such that
$Z = X \circledast Y,$ with $\deg(X) \ge d_1+1$ and  $\deg(Y) \ge d_2+1.$  
For, choose $p$ so that 
\begin{align*}
\deg(U_1 \circledast U_2 \circledast \cdots \circledast U_{p-1}) &\le d_1 \\
\deg(U_1\circledast U_2 \circledast \cdots \circledast U_{p}) &\ge d_1+1
\end{align*}
Take $X =U_1 \circledast \cdots \circledast U_p$ and $Y = U_{p+1} \circledast \cdots \circledast U_q.$  Then $\deg(U_p) \le g$ so $\deg(X) \le d_1 + g + n -1,$ 
while $\deg(X\circledast Y)=d_3 > d_1+d_2+2n+g-2,$ so $\deg(Y) > d_2.$

Using this claim and (\ref{eqn-d}), and setting $j = \deg(X)$ and $k=\deg(Y),$
there exists $\widehat{X}$ which maps to $X$ in the following exact sequence,
\begin{diagram}[size=2em]
 H^j(\L, \L^{\le t_1}) & \rTo & H^j(\L,\L_0) & \rTo & H^j(\L^{\le t_1},\L_0)\\
\widehat{X} & \rTo & X
\end{diagram}
Similarly the class $Y$ has some lift $\widehat{Y} \in H^k(\L,\L^{\le t_2}).$
Then $\widehat{X} \circledast \widehat{Y}\in H^{d_3}(\L, \L^{t_1+t_2})$ maps to 
$Z.$  But this implies that $t_3 > t_1+t_2.$  Otherwise, $\L^{\le t_3} \subset
\L^{\le t_1+t_2}$ so in the following diagram,
\begin{diagram}[size=2em]
&H^{d_3}(\L,\L^{\le t_3}) & \rTo& H^{d_3}(\L,\L_0) & \rTo_{i^*}& 
H^{d_3}(\L^{\le t_3},\L_0) \\
& \uTo && \uTo && \uTo \\
& H^{d_3}(\L, \L^{\le t_1+t_2}) & \rTo & H^{d_3}(\L,\L_0) 
& \rTo & H^{d_3} (\L^{\le t_1+t_2},\L_0)\\
& \widehat{X} \circledast \widehat{Y} &\rTo & Z
\end{diagram}
we would have $\langle Z,z\rangle = \langle Z, i_*(z')\rangle = \langle i^*(Z),z' 
\rangle = 0$
which contradicts (\ref{eqn-Kronecker}). \qed

\section{Level nilpotence for cohomology}\label{sec-nilpotent-cohomology}
\subsection{}
We say that a class $\alpha \in H^i(\L,\L_0)$ is {\em level nilpotent} if there
exists $m$ so that $\Cr(\alpha^{\circledast m}) > m\Cr(\alpha).$
We say that a class $\beta \in \H^i(\L^{\le a},\L^{<a})$ is level nilpotent if
there exists $m$ so that $\beta^{\circledast m}=0$ in $\H^{mi+(m-1)(n-1)}
(\L^{\le ma}, \L^{<ma}).$

Let us say that two classes $\alpha \in H^i(\L, \L_0)$ and $\beta \in
\H^i(\L^{\le a}, \L^{<a})$ are {\em associated} if there exists an
{\em associating class} $\omega \in \H^i(\L, \L^{< a})$ with
\begin{diagram}[size=1.7em]
\omega & \rTo & \alpha && \H^i(\L,\L^{< a}) & \rTo& H_i(\L,\L_0)\\
\dTo   &         &        &\text{in }\ & \dTo    &     & \\
\beta  &         &        &         &\H_i(\L^{\le a},\L^{<a}) &&
\end{diagram}
Then $\Cr(\alpha) > a$ if and only if $\alpha$ is associated to the zero class
$\beta = 0 \in \H^i(\L^{\le a}, \L^{<a}).$

\begin{lem}
Suppose $\alpha \in H^i(\L,\L_0)$ and $\beta \in \H^i(\L^{\le a}, \L^{<a})$
are associated, where $a = \Cr(\alpha).$  If $\beta$ is level nilpotent,
then $\alpha$ is also level-nilpotent.\end{lem}
\proof The proof is exactly parallel to that of Lemma \ref{lem-nilpotent}.\qed

\begin{thm}\label{thm-nilpotent-cohomology}
Let $M$ be a compact $n$ dimensional Riemannian manifold and suppose that all
critical points of the function $F = \sqrt{E}:\L \to \mathbb R$ are
nondegenerate (i.e. they lie on isolated nondegenerate critical orbits).  
If $M$ is orientable let $G = \Z$, otherwise let
$G = \Z/(2).$  Then every class $\alpha \in H^i(\L,\L_0;G)$ is
level-nilpotent and every class $\beta \in H^i(\L^{\le a}, \L^{<a};G)$
is level nilpotent (for any $i>0$ and any $a\in \mathbb R$).
\end{thm}
\proof  The proof is similar to that of Theorem \ref{thm-nilpotence}. 
\qed

\section{Level products in the nondegenerate case}\label{sec-nondegenerate}
\subsection{}  Throughout this section homology and cohomology will be taken
with coefficients in $G = \mathbb Z.$  Let $\Sigma\subset\Lambda$ be a 
nondegenerate critical orbit of index $\lambda$ and let $U\subset\L$ be a
sufficiently small neighborhood of $\sigma.$  Assume the negative bundle
$\Gamma \to \Sigma$ is orientable.  Then the (local, level) homology groups are
\begin{equation}\label{eqn-table-local-level}
\H_i(\L^{<c}\cup \Sigma, \L^{<c}) \cong
H_{i}(\L ^{\leq c}\cap U,\L ^{<c}\cap U)\cong 
\begin{cases}\mathbb Z&\text{if}\ i=\lambda,\lambda+1\\
             0&\text{otherwise}\end{cases}\end{equation}
and the same holds for the cohomology groups $H^i(\L ^{\leq c}\cap U,\L ^{<c}\cap U).$

\quash{
\begin{center}%
\begin{figure}[!h]\begin{tabular}
[c]{|l||c|c|c|c|}\hline
$\phantom{H^{H^T}}$ & $H_{\lambda}$ & $H_{\lambda+1}$ & $H^{\lambda}$ &
$H^{\lambda+1}$\\\hline\hline
$G=\mathbb{Z}/(2)$ & $\mathbb{Z}/(2)$ & $\mathbb{Z}/(2)$ & $\mathbb{Z}/(2)$ &
$\mathbb{Z}/(2)$\\\hline
$G=\mathbb{Z}$, $\Gamma$ orientable & $\mathbb{Z}$ & $\mathbb{Z}$ &
$\mathbb{Z}$ & $\mathbb{Z}$\\\hline
$G=\mathbb{Z}$, $\Gamma$ not orientable & $\mathbb{Z}/(2)$ & $0$ & $0$ &
$\mathbb{Z}/(2)$\\\hline
\end{tabular}\caption{Level homology and cohomology groups}\label{fig-level-groups}
\end{figure}
\end{center}
}
\subsection{}
Now suppose $\gamma \in \L$ is a prime geodesic, all of whose
iterates are nondegenerate.  Let $a$ be its length.  
Let $\gamma^r$ denote the $r$-fold iterate; $\lambda_r$
its Morse index; and let $\Sigma_r \subset \L$ be its $S^1$-saturation.
Assume the negative bundle $\Gamma_r$ over $\Sigma_r$ is
orientable\footnote{If $M$ is orientable and $\gamma$ is prime then 
$\Gamma_1$ is orientable.   This follows from
the argument of \cite{Rademacher} \S 2.2, which is reproduced in the
proof of Proposition \ref{prop-orientable} below.  In addition,
$\lambda_1$ and $\lambda_2$ have the same parity $\Longleftrightarrow$
all the negative bundles $\Gamma_m$ are orientable $\Longleftrightarrow$
all the $\lambda_i$ have the
same parity, cf. \cite{Rademacher, Wilking}.\label{foot1}}, 
and let $\sigma_r, \overline{\tau}_r, \overline{\sigma}_r, \tau_r$
be generators for the local level (co)homology classes, that is,
\begin{equation}\label{eqn-local-level-classes}
 \sigma_r \in H_{\lambda_r},\quad \overline{\sigma}_r \in H_{\lambda_r+1},\quad
\ov{\tau}_r \in H^{\lambda_r},\quad {\tau}_r \in H^{\lambda_r+1}.\end{equation}

As a consequence of the nilpotence results from \S \ref{sec-nilpotence}
and \S \ref{sec-nilpotent-cohomology}, the index $\lambda_r$ can be neither 
minimal nor maximal for all $r$ (in the language of Proposition
\ref{prop-index-iterates}), and the local level homology and cohomology rings 
\begin{equation*}
\left(\oplus H_i(\L^{<ar} \cup \Sigma_r, \L^{<ar};G), * \right)\
\text{ and }\
 \left( \oplus H^i(\L^{<ar}\cup \Sigma_r, \L^{<ar};G), \circledast \right)
\end{equation*}
are not finitely generated.  
However, , if the index growth is minimal up to the $n$-th iterate (with $\lambda_r =
\lambda_r^{\min}$ for all $r \le n$) then nontrivial (level) homology products exist, 
and if the index growth is maximal up to the $2n$-th iterate
(with $\lambda_r = \lambda_r^{\max}$ for all $r \le 2n$) then nontrivial (level) 
cohomology products exist, as described in the following theorem.  

\begin{thm}\label{prop-level-products}
Assume the manifold $M$ is orientable and the negative
bundle $\Gamma_r$ is orientable for all $r.$  Assume $r \ge 2.$ Then
the following statements hold in the local level (co)homology group
$H(\L^{< ra}\cup \Sigma_r, \L^{<ra};\mathbb Z).$ \begin{enumerate}
\item $(\sigma_1)^{*r}=0$ and $(\tau_1)^{*r}=0.$
\item Some further products are described in the following tables:

\begin{center}\begin{figure}[!h]{\small
\begin{tabular}{|c|c|c|}\hline
& $(\ov{\sigma}_1)^{*(r-1)}*\sigma_1\strut$ & $(\ov{\sigma}_1)^{*r}$ \\
\hline\hline
$\lambda_r = \lambda_r^{\min}$\ {\rm and}\ $\lambda_n=\lambda_n^{\min}$& 
$\sigma_{r}$ & $\ov{\sigma}_{r}$ \\
\hline$\lambda_r \ne \lambda_r^{\min}$ & $0$ & $0$ \\
\hline\end{tabular}} 
\caption{Homology level products}\label{fig-level-products1}\vskip-13pt
\end{figure}
\end{center}
\vskip-70pt

\begin{center}\begin{figure}[!h]{\small
\begin{tabular}{|c|c|c|}\hline
& $(\ov{\tau}_1)^{\circledast (r-1)} \circledast \tau_1$ &
$(\ov{\tau}_1)^{\circledast r} \strut$ \\
\hline$\lambda_{rn}=\lambda_{rn}^{\max}$ & ${\tau}_{r}$ & $\ov\tau_{r}\strut$ \\
\hline$\lambda_r \ne \lambda_r^{\max}\strut$ & $0$ & $0$\\
\hline
\end{tabular}}
\caption{Cohomology level products}\label{fig-level-products2}\vskip-13pt
\end{figure}
\end{center}
\vskip-70pt
\item If $n-\lambda_1$ is even then $(\ov{\sigma}_1)^{*r}=0.$
\item If $n-\lambda_1$ is odd then $(\ov{\tau}_1)^{\circledast r}=0.$ 
\end{enumerate}
\end{thm}

\proof  We begin with the parity statements (3) and (4).
In general, if $\sigma \in H_k(\L;\mathbb Z)$ and $\tau \in H^k(\L;
\Z)$ then $2\sigma*\sigma=0$ if $n-k$ is odd and $2\tau\circledast\tau=0$ if
$n-k$ is even.  This follows from Propositions \ref{prop-homology-signs} and
\ref{prop-cohomology-commutative}, and it implies the vanishing of
$(\ov{\sigma}_1)^{*r}$ and $(\ov{\tau}_1)^{\circledast r}$
($n-\lambda_1$ even, odd, respectively).  

Statement (1) follows from the fact that the homology class $\sigma_1$ is
supported on a closed subset $A \subset \L^{\le a}$ such that
$A \cap \Sigma_1$ consists of a single point.  By Proposition
\ref{prop-product-extended} the product $\sigma_1*\sigma_1$ is supported
on the set $A *A'$ where $A'\subset \L$ is a support set for $\sigma_1$
that intersects $\Sigma_1$ in a different point.  Consequently
$A*A' \subset \L^{<2a}.$  Similarly,  the cohomology
class $\tau_1 \in H^{\lambda}(\L^{\le a+\epsilon}, \L^{<a})$ is supported
on a closed set $B \subset \L^{\ge a}$ that intersects $\Sigma_1$ in a
single point.  The zeroes in the second row of each of the tables are
also easily explained.  The (level) homology classes $(\ov\sigma_1)^{*r}$
and $\ov\sigma_r$ have the same degree if and only if $\lambda_r = 
\lambda_r^{\min}.$  The (level) cohomology classes 
$(\ov\tau_1)^{\circledast r}$ and $\ov\tau_r$ have the same degree iff
$\lambda_r = \lambda_r^{\max}.$  But all the $\lambda_i$ have the same
parity, so if $\lambda_r$ does not attain its maximum or minimum value
(i.e. if $\lambda_r \not\in \left\{ \lambda_r^{\min}, \lambda_r^{\max}
\right\}$) then
\[ \lambda_r^{\min} +2 \le \lambda_r \le \lambda_r^{\max}-2.\]
In this case it follows from (\ref{eqn-table-local-level}) that 
$(\ov\sigma_1)^{*r}=0$ and $(\ov\tau_1)^{\circledast r}=0.$  The
other calculations are similar.  The remaining statements in Theorem
\ref{prop-level-products} will be proven in the next two sections.

\subsection{Case of maximal growth} 
In this section we assume $\lambda_i = \lambda_i^{\max}$ for $i\le rn.$
Choose $x_0 = \gamma(0)$ for the base point of $M.$  
By Lemma \ref{lem-based-index} the index $\lambda_r$ equals
the index $\lambda_r^{\Omega}$ of $\gamma^r$ in the based loop space 
$\Omega = \Omega_{x_0},$ and it coincides with the index of $\gamma^r$
in the spaces $T_{\gamma^r}^{\perp}\L$ and $T_{\gamma^r}^{\perp}
\Omega$ of vector fields $V(t)$ along $\gamma^r$ such that
$V(t) \perp \gamma'(t)$ for all $t.$

Let $W_{1}$ be a maximal negative subspace of $T_{\gamma}^{\perp}\Omega$ (so
$\dim(W_{1})=\lambda_{1}=\lambda_{1}^{\Omega}$). Let $W_{1}^{\bullet r}$ be
the $r\lambda_{1}$-dimensional negative subspace of $T_{\gamma^{r}}\Omega$
consisting of concatenations $V_{1}\bullet V_{2}\bullet\cdots\bullet V_{r}$ of
vector fields $V_{i}\in W_{1}.$ Then $W_{1}^{\bullet r}$ is a maximal negative
subspace of the kernel of
\begin{align*}
\nu:T_{\gamma^{r}}^{\perp}\Omega &  \rightarrow T_{\gamma(0)}^{\perp}%
M\times\cdots\times T_{\gamma(0)}^{\perp}M\\
V  &  \mapsto\left(  V(\textstyle{\frac{1}{r}}),\cdots,V(\textstyle{\frac
{r-1}{r}})\right)
\end{align*}
Choose a maximal negative subspace $W_{r}\subset T_{\gamma^{r}}^{\perp
}\Omega$  containing $W_{1}^{\bullet r}$.  Then
\[ \dim(W_r) = \lambda_r^{\Omega} = \lambda_r = \lambda_r^{\max}
= r\lambda_{1}+(r-1)(n-1).\]
It follows that {\em the restriction of $\nu$
to $W_{r}$ is surjective} because its kernel has dimension $r\lambda_1.$

Let $\mathcal{C}_{r}$ be the $r$-leafed clover consisting of loops $\eta
\in\L $ such that $\eta(0)=\eta(i/r)$ for $i=0,1,2,\cdots,r.$ The exponential
map $T_{\gamma^{r}}(\L )\rightarrow\L $ takes $W_{r}$ to a relative cycle in
$(\L ^{\leq ra},\L ^{\leq ra-\epsilon})$ which we also denote by $W_{r},$
whose (relative) homology class is $[W_{r}]=\sigma_{r}$ (and $[W_{1}%
]=\sigma_{1}$).

The function $J:\L \times\lbrack0,1]\rightarrow\L $ extends in an obvious way
to a family of reparametrizations,
\[
J:W_{r}\times\left[  \textstyle{\frac{1}{r}}-\epsilon,\textstyle{\frac{1}{r}%
}+\epsilon\right]  \times\left[  \textstyle{\frac{2}{r}}-\epsilon
,\textstyle{\frac{2}{r}}+\epsilon\right]  \times\cdots\times\left[
\textstyle{\frac{r-1}{r}}-\epsilon,\textstyle{\frac{r-1}{r}}+\epsilon\right]
\]
which is transverse\footnote{The restriction $J|W_r$ is transverse to
$\mathcal C_r$ in the directions normal to $\gamma'(0)$ because $\nu|W_r$
is surjective.  The intervals $[\frac{i-1}{r},\frac{i}{r}]$ take care of the 
tangential directions.} 
to $C_{r}$ and such that
\[
J\left(  W_{r}\times\left[  \textstyle{\frac{1}{r}}-\epsilon,\textstyle{\frac
{1}{r}}+\epsilon\right]  \cdots\times\left[  \textstyle{\frac{r-1}{r}%
}-\epsilon,\textstyle{\frac{r-1}{r}}+\epsilon\right]  \right)  \cap
\mathcal{C}_{r}=W_{1}\bullet W_{1}\bullet\ldots\bullet W_{1}%
\]
(Pontrjagin product). 
By (a relative version of) Proposition \ref{prop-based-cohomology} we conclude that
\begin{align*}
\langle \ov\tau_1 \circledast \ov\tau_1 \circledast \ldots \circledast \ov\tau_1,
\sigma_r\rangle
&= \langle\ov\tau_1\circledast\ov\tau_1 \circledast \ldots \circledast \ov\tau_1,
[W_r]\rangle\\
&= \langle\ov\tau_1,[W_1]\rangle\cdot 
\langle\ov\tau_1,[W_1]\rangle \cdot \ldots \cdot 
\langle\ov\tau_1,[W_1]\rangle =1.
\end{align*}
It follows that $\ov\tau_1^{\circledast r} = \ov{\tau_r}.$  The calculation
for $\ov\tau_1^{\circledast (r-1)}\circledast \tau_1$ is similar.
The same technique, by explicitly displaying cycles, may be used to
prove Theorem \ref{thm-cohomology-nontrivial} below.
\qed

\subsection{Case of minimal index growth}
If $(D\Gamma_r, S\Gamma_r)$ denote the $\epsilon$-disk and sphere bundle of
the negative bundle $\Gamma_r \to \Sigma_r$ then, for sufficiently small
$\epsilon > 0$ the exponential mapping $\exp:D\Gamma_r \to \L$ is a
smooth embedding whose image
\[ (\Sigma_r^-, \partial \Sigma_r^-) = (\exp(D\Gamma_r), \exp(S\Gamma_r))\]
is a smoothly embedded submanifold with boundary in $\L$ that ``hangs
down'' from the critical set $\Sigma_r.$ Its dimension is $\lambda_r+1$ and
its fundamental class is 
\[\overline{\sigma}_r = [\Sigma_r^{-}, \partial \Sigma_r^{-}] \in
H_{\lambda_r+1}(\L^{<r\ell}\cup \Sigma_r, \L^{<r\ell}) \cong \Z\]
where $\ell$ denotes the length of $\gamma.$

Now assume that $\lambda_n = \lambda_n^{\min}.$  By Lemma \ref{lem-based-index} this
implies that the difference between $\lambda_1^{\Omega}$ 
and $\lambda_1$ is the {\em maximum} possible:  $\lambda_1 = \lambda_1^{\Omega} +n-1.$  
Let $W_1 \subset T_{\gamma}^{\perp}\L$ be a maximal negative subspace.  Then the mapping
\begin{equation}\label{eqn-neg-perp}
\nu:W_1 \to T_{\gamma(0)}^{\perp}M,\qquad \nu(V)=V(0)\end{equation} 
is surjective. Consequently
\[ \ev_0:\Sigma_1^{-} \to M\]
is a submersion in a neighborhood of the closed geodesic $\gamma.$
It follows that
$\Sigma_r^{-}$ and $\Sigma_1^{-}$ are transverse over $M$ (in some 
neighborhood of $\gamma^r$ and $\gamma$) and
\begin{diagram}[size=2em]
 \Sigma_r^{-} \times_M \Sigma_1^{-} &\rTo^{\cong}_{\phi_{\min}}  & 
\Sigma_r^{-}*\Sigma_1^{-} \end{diagram}
is a smooth submanifold of $\L$ in a neighborhood of
\begin{diagram}[size=2em]
\Sigma_r \times_M \Sigma_1 & \rTo^{\cong}_{\phi_{\min}} & \Sigma_r *\Sigma_1 
= \Sigma_{r+1} \end{diagram}
and it is contained in $\Sigma_{r+1}\cup \L^{<(r+1)\ell}.$
Now assume the index growth is minimal up to level $r,$ that is, 
$\lambda_r = r\lambda_1 -(r-1)(n-1),$ so that
\[\dim(\Sigma_r^- \times_M \Sigma_1^-) = \dim(\Sigma_{r+1}^-).\] 
Then we may apply\footnote{The condition on the eigenvalues of the second derivative 
(in the hypotheses of Theorem \ref{thm-Morse-Bott}) is satisfied by the energy 
functional, as a consequence of Theorem 2.4.2 in \cite{Klingenberg}. } 
Theorem \ref{thm-Morse-Bott} to the embeddings
\begin{diagram}
\Sigma_{r+1} \subset \Sigma_r^- \times_M \Sigma_1^- &\rTo_{\phi_{\min}}&
\L^{\le (r+1)\ell} \end{diagram}
to conclude that 
\[ [\Sigma_r^{-}*\Sigma_1^{-}, \partial\left(\Sigma_r^{-}*\Sigma_1^{-}\right)]=
[\Sigma_{r+1}^{-}, \partial \Sigma_{r+1}^{-}] \in H_{\lambda_{r+1}+1}
(\Sigma_{r+1} \cup \L^{<(r+1)\ell}, \L^{<(r+1)\ell}).
\]
(In fact we even obtain a local diffeomorphism $\tau:\Sigma_{r+1}^- \to \Sigma_{r}^- *\Sigma_1^-$
between the negative submanifolds, by equation (\ref{eqn-pi-V}).)
Using Proposition \ref{prop-product-energy} we conclude that
\[ [\Sigma_r^{-}, \partial \Sigma_r^{-}] * [\Sigma_1^{-}, \partial \Sigma_1^{-}] = 
[\Sigma_{r+1}^{-}, \partial \Sigma_{r+1}^{-}]\]
and, by induction, that
\begin{equation}\label{eqn-level-power}
\ov{\sigma}_{r+1}  =  \ov{\sigma}_r * \ov{\sigma_1}=  (\ov\sigma_1)^{*r}*\ov\sigma_1
= (\ov\sigma_1)^{*(r+1)}\end{equation}
as claimed.  The geometric calculation of the product $\ov{\sigma_r} * \sigma_1$ 
is similar.  A similar procedure will be used to prove Theorem 
\ref{thm-non-nilpotent} below.\qed

\subsection{The non-nilpotent case}
The case of isolated closed geodesics with slowest possible index growth
was studied in \cite{Nancy2}; fastest possible index growth was
studied in \cite{Nancy1} in a slightly different language because the
$*$ and $\circledast$ products were not available at the time.  
The Chas-Sullivan homology product is modeled in the
local geometry of an isolated closed geodesic with the slowest possible
growth rate.  The symmetry between the geometry in the case of slowest
possible index growth (non-nilpotent level homology) and that of fastest
possible index growth (non-nilpotent level cohomology) inspired the search
for the cohomology product.  We give statements here of two theorems on
non-nilpotent products that are restatements of the ``complementary theorem''
(p. 3100 of \cite{Nancy1}) and the theorem (p. 3099 of \cite{Nancy1}):

\begin{thm}  Let $\gamma$ be an isolated closed geodesic with non-nilpotent
level homology.  Let $L = \text{length}(\gamma).$  Then, for any $\epsilon >0,$
if $m\in \mathbb Z$ is sufficiently large there is a closed geodesic with
length in the open internval $(mL, mL+\epsilon).$  It follows that $M$ has
infinitely many closed geodesics.  \end{thm}

\begin{thm}
Let $\gamma$ be an isolated closed geodesic with non-nilpotent level
cohomology.  Let $L = \text{length}(\gamma).$  Then, for any $\epsilon >0,$
if $m \in \mathbb Z$ is sufficiently large there exists a closed geodesic
with length in the open interval $(mL-\epsilon, mL).$  It follows that  $M$
has infinitely many closed geodesics.  \end{thm}

\section{Homology product when all geodesics are closed}\label{sec-all-closed}
\subsection{}\label{subsec-def-all-closed}
In this section $M$ denotes a compact $n$ dimensional Riemannian manifold.  The
coefficient group $G$ for homology is taken to be $G = \Z$ if $M$ is orientable,
and $G = \Z/(2)$ otherwise.
Throughout this section we assume that all geodesics $\gamma$ are closed
and simply periodic with the same prime length $\ell,$ meaning that:
$\gamma(0) = \gamma(1)$, $\gamma'(0) = \gamma'(1),$ $\gamma$ is injective on $(0,1),$
and $L(\gamma) = \ell$ if $\gamma$ is prime. 

For $r \ge 1$ denote by $\Sigma_r \subset \L$ the critical
set consisting of $r$-fold iterates of prime closed geodesics.
There is a diffeomorphism $SM \cong \Sigma_r$ between the unit sphere bundle
of $M$ and $\Sigma_r,$ which assigns to each unit tangent vector $v$ the $r$-fold
iterate of the prime geodesic with initial condition $v.$  It follows that the
nullity of each geodesic is at least $\dim(\Sigma_r)-1 = 2n-2.$  Since this is the
maximum nullity possible, we see that the nullity $\nu_{r}$ of every closed
geodesic is $2n-2.$  In particular, each $\Sigma_r$ is a nondegenerate critical
submanifold (in the sense of Bott), with critical value $F(\Sigma_r) = r\ell,$ and this
accounts for all the critical points of the Morse function $F= \sqrt{E}.$
Moreover, for any $c \in \mathbb R$, the singular and \Cech homology
$H_*(\L^{\le c})$ agree, by Proposition \ref{prop-fda}.
Every geodesic $\gamma  \in \Sigma_r$ has the same index, (\cite{Besse} Thm.
7.23) say, $\lambda_r.$  By Proposition (\ref{eqn-Bott-inequality1}),
$\lambda_r \le r\lambda_1 + (r-1)(n-1).$  By (\ref{eqn-Bott-inequality2}),
$\lambda_r \ge r\lambda_1 + (r-1)(n-1),$ hence {\em the index growth is maximal},
\begin{equation}\label{eqn-lambda-r}
 \lambda_r = \lambda_r^{\max}=r\lambda_1 + (r-1)(n-1).\end{equation}
As in Theorem \ref{thm-Morse-Bott}, let $\Gamma_r \to \Sigma_r$ be the negative
definite bundle.  It is a real vector bundle whose rank is $\lambda_r.$

\begin{prop} \label{prop-orientable}  If $M$ is orientable (and all geodesics
on $M$ are closed with the same prime period) then for any $r$ the
negative bundle $\Gamma_r$ is also orientable.  \end{prop}
\proof
Fix $r$ and let $\gamma_0 \in \Sigma_r$ be a basepoint.  Set $x_0 = \gamma_0(0)
\in M.$  Using the long exact sequence for the fibration $SM \to M$ we see that
the projection $\Sigma_r \to M$ induces an isomorphism
$ \pi_1(\Sigma_r, \gamma_0) \cong \pi_1(M,x_0).$ If $\lambda_1 >0$ then by
\cite{Besse} Thm. 7.23 the manifold $M$ is simply connected, so if $\dim(M)\ge 3$
the same is true of $\Sigma_r$, hence every vector bundle on $\Sigma_r$ is
orientable.  If $\dim(M)=2$ then $M = S^2$ is the 2-sphere and $\Gamma_r$
is orientable by inspection.

So we may assume that $\lambda_1 = 0.$  By \cite{Besse} Thm 7.23 this
implies that $M$ is diffeomorphic to real projective space and $\pi_1(M,x_0) \cong
\mathbb Z/(2).$  Since $M$ is orientable, $n = \dim(M)$ is odd.

The bundle $\Gamma_r \to \Sigma_r$ is orientable iff its restriction to each loop
in $\Sigma_r$ is orientable, and it suffices to check this on any loop in the
single non-trivial class in $\pi_1(\Sigma_r, \gamma_0).$  We may even take that loop
to be the canonical lift $\widetilde{\gamma}:[0,1]\to SM,$
\[ \widetilde\gamma(t) = \left( \gamma(t), \gamma'(t)/ || \gamma'(t)|| \right)\]
of a periodic prime geodesic $\gamma:[0,1] \to M$ with $\gamma(0) = x_0.$
(Since each geodesic is determined by its initial conditions, it follows that
$\gamma_0 = \gamma^r.$ This geodesic loop is contractible in $M$ iff $r$ is even.)

Following \cite{Rademacher} \S 2.2, the $S^1$ action on $\Gamma_r\to \Sigma_r$
corresponds to  an operator $T:E \to E$ such that $T^r=I$, where
$E = \Gamma_{r, \gamma_0}$ is the fiber of $\Gamma_r$ at the basepoint $\gamma_0.$
Then $\det(T) = \pm 1$ and the bundle $\Gamma_r|\widetilde{\gamma}$ is orientable
iff $\det(T) =+1.$  If $\lambda$ is an eigenvalue of $T$ then $\lambda^r=1.$  Those
eigenvalues which are not equal to $\pm 1$ come in complex
conjugate pairs.  Hence, $\det(T) \ne 1$ iff the dimension of the $-1$-eigenspace
of $T$ is odd.  If $r$ is odd then $-1$ is never an eigenvalue of $T$ so $\det(T)
=1.$  Thus we may assume that $r$ is even.  As remarked in \cite{Rademacher}, by
\cite{Klingenberg} \S 3.2.9, \S 4.1.5 (see pp.~128,129), since $r$ is even,
the dimension of the $-1$-eigenspace of $T$ is equal to the quantity
$I_{\gamma}(-1)$ of \cite{Rademacher} (\S 1.1) and \cite{Klingenberg} (\S 4.1),
namely
\[I_{\gamma}(-1)= \lambda_2 - \lambda_1 = n-1\]
which is even.  Therefore $\Gamma_r|\widetilde{\gamma}$ is orientable, so the
bundle $\Gamma_r \to \Sigma_r$ is orientable.  \qed

It follows from Theorem \ref{thm-Morse-Bott} that
a choice of orientation for $\Gamma_r$ determines an isomorphism
\begin{equation}\label{eqn-hr}
 h_r:H_i(\Sigma_r;G) \cong H_{i+\lambda_r}(\L^{\le r\ell},\L^{<r\ell};G)
\end{equation}
where $G = \Z$ if $M$ is orientable and $G = \Z/(2)$ otherwise.

\subsection{The non-nilpotent homology class}\label{subsec-Xi}
For any $c\in \mathbb R,$ the long exact sequence for the pair
$(\L^{\le c}, \L_0)$ is canonically split by the evaluation mapping
${\bf ev}_0:\L^{\le c} \to \L_0$ so for
any Abelian group $G$ we obtain a canonical isomorphism
\begin{equation}\label{eqn-split-homology}
H_i(\L^{\le c};G) \cong H_i(\L_0;G) \oplus H_i(\L^{\le c}, \L_0;G). \end{equation}
Taking $c = \ell = F(\Sigma_1)$ and using Theorem \ref{thm-Morse-Bott} gives a
canonical isomorphism
\begin{equation}\label{eqn-bottom-homology}
H_i(\L^{\le \ell};G) \cong H_i(\L_0;G) \oplus H_{i-\lambda}(\Sigma_1;G).
\end{equation}
where $G = \mathbb Z$ if $M$ is orientable, and $G = \mathbb Z/(2)$ otherwise.
The manifold $\Sigma_1$ is orientable (whether or not $M$ is), since
$T\Sigma_1 \oplus \mathbf{1} \cong h^*(TM) \oplus h^*(TM).$  Choose an
orientation of $\Sigma_1$ with a resulting
fundamental class $[\Sigma_1] \in H_{2n-1}(\Sigma_1;G).$  Define
\[ \Theta \in H_{2n-1+\lambda_1}(\L^{\le \ell})\]
to be its image under the isomorphism (\ref{eqn-bottom-homology}).  Set
$b=\lambda_1+n-1.$

\begin{thm}\label{thm-non-nilpotent}
Let $M$ be an $n$ dimensional compact Riemannian manifold, all of whose
geodesics are simply periodic with the same prime length $\ell.$
If $M$ is orientable let $G = \mathbb Z,$ otherwise set $G = \mathbb Z/(2).$
Then the following statements hold. \begin{enumerate}
\item the energy $E:M \to \mathbb R$ is a perfect Morse-Bott function for
$H_*(\L;G),$ that is, for each $r \ge 1$ every connecting homomorphism vanishes
in the long exact sequence
\begin{diagram}[size=2em]
&\rTo& H_i(\L^{<r\ell};G) & \rTo& H_i(\L^{\le r\ell};G) &\rTo& H_i(\L^{\le r\ell},
\L^{<r\ell};G) &\rTo \end{diagram}
\item the product $*\Theta:H_i(\L,\L_0;G) \to H_{i+b}(\L,\L_0;G)$
with the class $\Theta$ is injective, and
\item for all $r \ge 1$ this product induces an isomorphism on level homology,
\[w_r: H_i(\L^{\le r\ell}, \L^{<r\ell};G) \to H_{i+b}(\L^{\le (r+1)\ell},
\L^{<(r+1)\ell};G).\]
\end{enumerate}
\end{thm}

\proof
Assume, by induction on $r$ that $\beta_r:H_i(\L^{\le r\ell},\L_0;G)
\to H_i(\L^{\le r\ell}, \L^{<r\ell};G)$ is surjective for all $i.$  The
case $r=1$ is handled by equation (\ref{eqn-bottom-homology}).
Consider the following commutative diagram, where the vertical mappings are
given by the Chas-Sullivan product $*\Theta,$
\begin{diagram}\label{diag-wr}
H_i(\L^{<r\ell},\L_0) & \rInto_{\alpha_r} & H_i(\L^{\le r\ell},\L_0) & \rOnto_{\beta_r} &
H_i(\L^{\le r\ell}, \L^{<r\ell}) \\
\dTo^{u_r} && \dTo^{v_r} && \dTo^{w_r} \\
H_{i+b}(\L^{<(r+1)\ell},\L_0) &\rTo_{\alpha_{r+1}} & H_{i+b}(\L^{\le (r+1)\ell},\L_0) &
\rTo_{\beta_{r+1}} &
H_{i+b}(\L^{\le (r+1)\ell}, \L^{<(r+1)\ell})
\end{diagram}
We will show below that the mapping $w_r$ is an isomorphism.  Assuming this for
the moment, it follows that $\beta_{r+1}$ is surjective in all degrees. Hence the
horizontal sequences in this diagram split into short exact sequences (so the
Morse function is perfect).  Therefore $u_r$ is injective if and only if $v_r$ is
injective.  However $v_r$ may be identified with the mapping $u_{r+1}$ under the
isomorphism $H_i(\L^{\le r\ell}) \cong H_i(\L^{<(r+1)\ell})$ so it is injective
by induction. (The mapping $u_1$ is trivially injective.)
The rest of \S \ref{sec-all-closed} will be devoted to
proving that $w_r$ is an isomorphism.\qed

\begin{thm}\label{thm-calculating-CS}  After composing with the isomorphism
\[h_r: H_*(SM) \to H_*(\L^{\le r\ell},\L^{<r\ell})\]
(where $SM$ denotes the unit sphere bundle of the tangent bundle to $M$),
the Chas-Sullivan product becomes the intersection product on homology,
which is to say that the following diagram commutes:
\begin{diagram}[size=2em]
H_i(\L^{\le r\ell},\L^{<r\ell}) \times H_j(\L^{\le \ell}, \L^{<\ell}) & \rTo^{*} &
H_{i+j-n}(\L^{\le (r+1)\ell}, \L^{<(r+1)\ell}) \\
\uTo^{h_r \times h_1}_{\cong} && \dTo^{h_{r+1}}_{\cong}\\
H_{i-\lambda_r}(SM)\times H_{j-\lambda_1}(SM) & \rTo &
H_{i-\lambda_r+j-\lambda_1-2n+1}(SM)
\end{diagram}
where the bottom row denotes the intersection product in homology\end{thm}

We remark that this immediately implies that $w_r$ is an isomorphism because
the mapping $w_r$ is the C-S product with the unique top dimensional class in
$H_*(\L^{\le \ell}, \L^{<\ell})$ which becomes the fundamental class
$[SM] \in H_{2n-1}(SM)$ under the vertical isomorphism in the above diagram.
But the intersection with the fundamental class is the identity mapping
$H_*(SM) \to H_*(SM).$

\proof The set $\Sigma_r*\Sigma_1 = \phi_{\frac{r}{r+1}}(\Sigma_r\times_M\Sigma_1)$
consists of pairs of composable loops; the first is an $r$-fold iterate of a prime
geodesic and the second is a single prime geodesic; all parametrized proportionally
with respect to arclength.  This set contains $\Sigma_{r+1}$ as a submanifold of
codimension $n-1.$  In fact the inclusion
\begin{equation}\label{eqn-diagonal-sphere}
 \Sigma_{r+1} \to \Sigma_r*\Sigma_1 \to \Sigma_r \times \Sigma_1\end{equation}
is the diagonal mapping  $SM \to SM \times SM.$

Let $(\Sigma_r^-, \partial \Sigma_r^-) = \left(\exp(D\Gamma_r),
\exp(\partial D\Gamma_r)\right)$
be the negative submanifold that hangs down from $\Sigma_r,$ as in Proposition
\ref{thm-Morse-Bott}, where $D\Gamma_r$ denotes a sufficiently small disk
bundle in the negative bundle $\Gamma_r \to \Sigma_r$ and where $\partial
D\Gamma_r$ denotes its bounding sphere bundle.  Then $\dim(\Sigma_r^- \times_M
\Sigma_1^-) = \dim(\Sigma_{r+1}^-)$ so we can apply Theorem \ref{thm-Morse-Bott}
to the embeddings
\begin{diagram}
\Sigma_{r+1} \subset \Sigma_r^- \times_M \Sigma_1^- &\rTo_{\phi_{\min}}&
\L^{\le (r+1)\ell} \end{diagram}
followed by an arbitrarily brief flow under the vector field $-\nabla F.$
The condition on the eigenvalues of the second derivative (in the hypotheses
of Theorem \ref{thm-Morse-Bott}) is satisfied by the energy functional, as a
consequence of Theorem 2.4.2 in \cite{Klingenberg}.  As in (\ref{eqn-pi-V}) we 
obtain a local (in a neighborhood of $\Sigma_{r+1}$) diffeomorphism
\begin{equation}\label{eqn-diffeo}
\tau:\Sigma_{r+1}^- \to \Sigma_{r}^- \times_M\Sigma_1^-\end{equation}
between the negative submanifolds, see Figure \ref{fig-finally}. By Proposition
\ref{prop-product-energy} the  Chas-Sullivan product is given by the composition
down the right side of this figure.

\begin{figure}[!h]
{\footnotesize
\begin{diagram}[size=2em]
 &  & H_i(\L^{\le r\ell},\L^{<r\ell}) \otimes
H_j(\L^{\le \ell},\L^{<\ell})\\
 && \dTo_{\cong}\\
H_{i-\lambda_r}(\Sigma_r)\otimes H_{j-\lambda_1}(\Sigma_1) & \rTo^{\cong}&
H_i(\Sigma_r^-,\Sigma_r^- -\Sigma_r)\otimes H_j(\Sigma_1^-,\Sigma_1^- -\Sigma_1)\\
\dTo^{\times} && \dTo_{\times}\\
H_{i-\lambda_r+j-\lambda_1}(\Sigma_r \times \Sigma_1) &\rTo^{\cong}&
H_{i+j}(\Sigma_r^- \times \Sigma_1^-, \Sigma_r^- \times \Sigma_1^- -
\Sigma_r \times \Sigma_1)\\
\dTo^{(\ref{eqn-Gysin-homology})} && \dTo \\
H_{i-\lambda_r+j-\lambda_1-n}(\Sigma_r \times_M \Sigma_1) &\rTo^{\cong} &
H_{i+j}(\Sigma_r^-\times\Sigma_1^-, \Sigma_r^- \times \Sigma_1^- -
\Sigma_r \times_M \Sigma_1)\\
\dTo && \dTo_{(\ref{eqn-relative-Thom-homology})}\\
H_{i-\lambda_r+j-\lambda_1-n}(\Sigma_r \times_M \Sigma_1) & \rTo^{\cong}&
H_{i+j-n}(\Sigma_r^- \times_M \Sigma_1^-, \Sigma_r^- \times_M \Sigma_1^-
- \Sigma_r\times_M\Sigma_{1})\\
\dTo^{(\ref{eqn-Gysin-homology})} && \dTo\\
H_{i+j-n-\lambda_{r+1}}(\Sigma_{r+1}) &\rTo^{\cong} &
H_{i+j-n}(\Sigma_r^-\times_M \Sigma_1^-, \Sigma_r^- \times_M \Sigma_1^- -\Sigma_{r+1})\\
\dTo && \dTo_{(\ref{eqn-diffeo})} \\
H_{i+j-n-\lambda_{r+1}}(\Sigma_{r+1}) &\rTo^{\cong} &
H_{i+j-n}(\Sigma_{r+1}^-, \Sigma_{r+1}^- - \Sigma_{r+1}) \\
&& \dTo_{\cong} \\
&& H_{i+j-n}(\L^{\le(r+1)\ell},\L^{<(r+1)\ell})
\end{diagram}}
\caption{The C-S product when all geodesics are closed}\label{fig-finally}
\end{figure}
On the other hand, the composition down the left side of the diagram is the
intersection pairing because the composition down the middle four rows is
just the Gysin pull back for the (diagonal) embedding (\ref{eqn-diagonal-sphere}).
This completes the proof of Theorem \ref{thm-calculating-CS} and hence also
of Theorem \ref{thm-non-nilpotent}. \qed

\subsection{}\label{subsec-polynomial-level}
Define the filtration $0 \subset I_0 \subset I_1 \cdots \subset
H_*(\Lambda,\Lambda_0;G)$ by
\[I_r = \text{Image}\left( H_*(\Lambda^{\le r\ell},
\Lambda_0;G) \to  H_*(\Lambda,\Lambda_0;G)\right).\]
By Proposition \ref{prop-product-extended}, $I_r * I_s \subset I_{r+s}$
so the Chas-Sullivan product induces a product on the associated graded group,
\[\text{Gr}_I H_*(\Lambda, \Lambda_0;G)=
\underset{r=1}{\overset{\infty}\oplus} I^r/I^{r-1}\cong\underset{r=1}
{\overset{\infty}\oplus} H_*(\L^{\le r\ell},\L^{<r\ell})\]
which therefore coincides with the level homology ring (\ref{eqn-level-homology}).
Let $H_*(SM;G)$ be the homology (intersection) ring of the unit
sphere bundle and let $H_*(SM)[T]_{\ge 1} = TH_*(SM)[T]$ be the ideal of 
polynomials of degree $\ge 1.$
\begin{cor}\label{cor-level-ring}  The mapping
\begin{equation}\label{eqn-polynomial-algebra}
\Phi: H_*(SM;G)[T]_{\ge 1} \to \text{Gr}_I H_*(\Lambda, \Lambda_0;G)\end{equation}
\[\Phi(aT^m) = h_1(a)*\Theta^{*(m-1)}\in H_{\deg(a)+\lambda_1+(m-1)b}(\L^{\le (m)\ell},
\L^{<(m)\ell};G)\]
is an isomorphism of rings.
\end{cor}
\proof This follows immediately from Theorems \ref{thm-non-nilpotent}
and \ref{thm-calculating-CS}.  \qed

\quash{ 
\proof  (Throughout this paragraph we use $G=\mathbb Z$ coefficients if $M$ is
orientable and $G=\mathbb Z/(2)$ coefficients otherwise.)
From (\ref{diag-wr}) we have a commutative diagram of isomorphisms
\begin{diagram}[size=2em]
H(SM)T^0 &\rTo^{\cdot T}& H(SM)T^1 &\rTo^{\cdot T}& H(SM)T^2 &\rTo^{\cdot T}&
\cdots\\
\dTo^{h} && \dTo && \dTo  \\
H_*(\L^{\le \ell},\L^{<\ell}) &\rTo_{*\Theta}& H_*(\L^{\le 2\ell},\L^{<2\ell})
&\rTo_{*\Theta}& H_*(\L^{\le 3\ell},\L^{<3\ell}) &\rTo_{*\Theta}& \cdots
\end{diagram}
By Theorem \ref{thm-non-nilpotent} the direct sum along the bottom row is
$H_*(\L,\L_0)$ while the direct sum along the top row is $H(SM)[T]$ and $\Phi$
is the vertical map.  So it suffices to prove that the following diagram commutes,
\begin{diagram}[size=2em]
H_a(\L^{\le \ell},\L^{<\ell}) \times H_b(\L^{\le \ell},\L^{<\ell}) &\rTo&
H_{a-\lambda_1}(\Sigma_1) \times H_{b-\lambda_1}(\Sigma_1)\\
\dTo&&\dTo \\
H_{a+b-n}(\L^{\le 2\ell},\L^{<2\ell}) &\rTo&
H_{a-\lambda_1 + b-\lambda_1 - 2n+1}(\Sigma_2)
\end{diagram}
where the left column is the Chas-Sullivan product and the right
column is the intersection product.  This calculation is essentially the
same as that appearing in \S \ref{subsec-big-diagram}, observing that
the inclusion of $SM$ as the diagonal in $SM \times SM$ factors as follows:
\[ \Sigma_2 \subset \Sigma_1 * \Sigma_1 = \Sigma_1 \times_M \Sigma_1
\subset \Sigma_1 \times \Sigma_1.\]
Then the Chas-Sullivan product is the composition down the left side of the
following diagram, and the intersection product is the composition down the
right side of this diagram:
{\small\begin{diagram}[size=2em]
H_a(U_1^-, U_1^- - \Sigma_1)\times H_b(U_1^-,U_1^--\Sigma_1)  &
\cong & H_{a-\lambda_1}(\Sigma_1)\times H_{b-\lambda_1}(\Sigma_1)\\
\dTo_{\times} && \dTo_{\times} \\
H_{a+b}(U_1^- \times U_1^-, U_1^- \times U_1^- -(\Sigma_1 \times
\Sigma_1))
& \cong & H_{a+b-2\lambda_1}(\Sigma_1 \times \Sigma_1) \\
\dTo_{\tau}^{\cong} && \dTo_{\tau}  \\
H_{a+b-n}(U_1^- \times_M U_1^-, U_1^- \times_M U_1^- - (\Sigma_1 \times_M
\Sigma_1)) &\ \cong\ & H_{a+b-2\lambda_1-n}(\Sigma_1 \times_M \Sigma_1) \\
\dTo^{\cong}_{(\phi_{\min})_*} && \dTo^{\cong}_{(\phi_{\min})_*}\\
H_{a+b-n}(U_1^-*U_1^-, U_1^-*U_1^- - \Sigma_1*\Sigma_1) &\cong&
H_{a+b-2\lambda_1-n}(\Sigma_1 * \Sigma_1)\\
\dTo &&
\dTo_{(\ref{eqn-Gysin-homology})}\\
H_{a+b-n}(U_1^-*U_1^-, U_1^-*U_1^- - \Sigma_{2}) &\cong &
H_{a+b-n-\lambda_2}(\Sigma_{2})
\end{diagram}}(noting that $a+b-n-\lambda_2 = (a-\lambda_1)+(b-\lambda_1)-(2n-1)$).
The second and fourth vertical mappings in the right hand column are
the Gysin (pullback) homomorphisms on homology.
\qed
} 

\section{Cohomology products when all geodesics are closed}\label{sec-coH-closed}
\subsection{}
As in \S \ref{subsec-def-all-closed}, assume that $M$ is compact $n$ dimensional and all
geodesics on $M$ are simply periodic with the same prime length, $\ell.$
Let $\Sigma_r \subset \L$ denote the submanifold consisting of the $r$-fold
iterates of prime geodesics.  It is a nondegenerate critical submanifold, diffeomorphic
to the unit sphere bundle $SM,$ having index $\lambda_r = r\lambda_1 + (r-1)(n-1)$
and critical value $F(\Sigma_r) = ra.$  Let $D\Gamma_r,S\Gamma_r$ be the unit disk
bundle and unit sphere bundle of the negative bundle $\Gamma_r \to \Sigma_r.$
If $M$ is orientable, take cohomology with coefficients in the ring $G=\Z$ and
choose orientations of $M$ and $\Gamma_r$, otherwise take coefficients in
in $G = \Z/(2).$ Theorem \ref{thm-Morse-Bott} then gives an isomorphism
\[h_r:H^i(\Sigma_r) \overset{\cong}{\longrightarrow}
H^{i+\lambda_r}(\L^{\le r\ell}, \L^{<r\ell})\]
by identifying each with $H^{i+\lambda_r}(D\Gamma_r,S\Gamma_r).$
Let $b = \lambda_1 + n-1.$  Define
\[ \Omega \in H^{\lambda_1}(\L^{\le \ell}, \L_0) \cong H^0(\Sigma_1)\]
to be the image $h_1(1)$ of the element $1.$

\begin{thm}\label{thm-cohomology-nontrivial}
  Assume $M$ satisfies the above hypotheses.  Then \begin{enumerate}
\item The energy $E:\L \to \mathbb R$ is a perfect Morse function for cohomology,
meaning that for each $r\ge 1$ the connecting homomorphism vanishes in the long
exact sequence
{\small\begin{diagram}[size=2em]
&\rTo& H^i(\L^{\le (r+1)\ell},\L^{\le r\ell};G) &\rTo& H^i(\L^{\le (r+1)\ell},\L_0;G)
&\rTo& H^i(\L^{\le r\ell},\L^0;G)  & \rTo \\
&&\uTo_{\cong} &\\ && H^i(\L^{\le (r+1)\ell}, \L^{<(r+1)\ell};G)
\end{diagram}}
\item  The product $\circledast \Omega:H^i(\L,\L_0) \to H^{i+b}(\L,\L_0)$ is injective and
\item this product induces an isomorphism
\[ w_r:H^i(\L^{\le r\ell}, \L^{<r\ell}) \to H^{i+b}(\L^{\le (r+1)\ell}, \L^{<(r+1)\ell})\]
for all $r \ge 1$ and all $i \ge 0.$
\end{enumerate}\end{thm}

As in the proof of Theorem \ref{thm-non-nilpotent}, part (3) implies parts
(1) and (2).  Part (3) follows from the stronger statement,
\begin{thm}\label{thm-stronger-cohomology}
  After composing with the isomorphism
\[ h_r:H^*(SM) \to H^*(\L^{\le r\ell}, \L^{<r\ell})\]
the cohomology product becomes the cup product on cohomology, which is to say
that the following diagram commutes 
(recall that $\lambda_{r+1}=\lambda_1 +\lambda_r+n-1$),
\begin{diagram}[size=2em]\label{diag-cup-product}
H^a(\Sigma_r) \otimes H^b(\Sigma_1) & \rTo_{h_r \otimes h_1} &
H^{a+\lambda_r}(\L^{\le r\ell},\L^{<r\ell}) \otimes
H^{b+\lambda_1}(\L^{\le\ell}, \L^{<\ell})\\
\dTo^{\smile} && \dTo_{\circledast} \\
H^{a+b}(\Sigma_{r+1}) &\rTo_{h_{r+1}}& H^{a+b+\lambda_{r+1}}
(\L^{\le (r+1)\ell}, \L^{<(r+1)\ell})
\end{diagram}
\end{thm}
The proof appears in the next few sections.  In order to use Proposition
\ref{prop-coproduct-levels} we will need to work in the space $\A$ of
PPAL loops.

\quash{
\begin{equation}\label{eqn-inclusions}
 \Sigma_{r+1} \subset \Sigma_r*\Sigma_1 \cong \Sigma_r \times_M \Sigma_1 \subset
\Sigma_r \times \Sigma_1 \to \Sigma_r\end{equation}
and the composition down the right side is the cohomology product with the class $\Ty.$
All the horizontal maps are isomorphisms.
{\footnotesize\begin{diagram}[size=2em]
H^i(\Sigma_r) & \lTo^{h_r} & H^{i+ \lambda_r}(\A^{\le r\ell}, \A^{<r\ell})
\cong H^{i+\lambda_r}(\A^{<r\ell+2\epsilon},\A^{<r\ell-\epsilon}) \\
\dTo^{\otimes 1} && \dTo_{\otimes \Omega}\\
H^i(\Sigma_r) \otimes H^0(\Sigma_1) & \lTo &
H^{i+\lambda_r}(\A^{< r\ell+2\epsilon},\A^{<r\ell-\epsilon}) \otimes
H^{\lambda_1}(\A^{< \ell+2\epsilon}, \A^{<\ell-\epsilon})\\
\dTo^{(\ref{eqn-inclusions})} &\bo{main}& \dTo_{\circledast} \\
H^i(\Sigma_{r+1}) &\lTo& H^{i+\lambda_{r+1}}
(\A^{<(r+1)\ell+\epsilon}, \A^{(r+1)\ell-2\epsilon})\\
&\luTo_{h_{r+1}}& \dTo^{\cong}\\
&& H^{i+\lambda_{r+1}}(\A^{\le (r+1)\ell}, \A^{<(r+1)\ell})
\end{diagram}}
}

\subsection{}  Fix $r \ge 1.$
Let $r\ell^+ = r\ell+2\epsilon$, $r\ell^- = r\ell-\epsilon,$ $\ell^+ = \ell+2\epsilon$, and
$\ell^- = \ell-\epsilon.$  Set $j=a+b+\lambda_r+\lambda_1$ so that
$j+n-1=a+b+\lambda_{r+1}.$ It is convenient to replace the gluing map
$\phi_{\half}:\A\times_M\A \to \A_{\half}$ with the topologically equivalent
embedding $\phi_{\r}:\A \times_M \A \to \A_{\r},$ which approximates $\phi_{\min}$
near $\Sigma_r \times \Sigma_1,$ in fact,
\[ \Sigma_r * \Sigma_1 = \phi_{\min}(\Sigma_r \times_M \Sigma_1) =
\phi_{\r}(\Sigma_r \times_M \Sigma_1).\]
We will write
\[ \F_{\r}^{<a,<b} \ \text{ for }\ \phi_{\r}(\F_{\A}^{<a,<b}).\]
Similarly we replace mapping $J_{\A}:\A \times[0,1] \to \A_{\half}$
with the mapping
\[J_r:\A \times[0,1] \to \A_{\r}\]
given by $J_r(\alpha,s) =
\alpha \circ \theta_{\r \to s}$ where $A_{\r}$ and $\theta_{\r \to s}$ are
defined by replacing $\half$ with $\r$ in \S \ref{subsec-first-cohomology-product}.
The mapping $\A^{\le (r+1)\ell} \to \A_{\r}^{\le r\ell,\le \ell}$ given by $\alpha \mapsto
J_r(\alpha, \r)$ is a homotopy equivalence; its inverse assigns to a pair of
joinable PPAL paths $\alpha,\beta \in \A_{\r}$ (with $\alpha (1)=\beta(0)$ and
$\beta(1)=\alpha(0)$) with lengths $\le r\ell$ and $\le \ell$ respectively, the
path $\phi_{\min}(\alpha,\beta)$ obtained by joining them at time $L(\alpha)/
(L(\alpha)+L(\beta)).$

Let $I'$ be the closed interval $I'=\left[\frac{r\ell-\epsilon}{(r+1)\ell+\epsilon},
\frac{r\ell+2\epsilon}{(r+1)\ell+\epsilon}\right]$ as in \S
\ref{subsec-construction-coprod}.  Then
\[ J_r(\A^{<(r+1)\ell+\epsilon} \times I') \subset \A_{\r}^{<r\ell^+, <\ell^+}\]
and $J_r$ takes both $\A^{<(r+1)\ell+\epsilon} \times \partial I'$ and
$\A^{<(r+1)\ell-2\epsilon} \times I'$ into the subset
\[ \A_{\r}^{<r\ell^+, <\ell^+} - \phi_{\r}(\F_{\A}^{[r\ell^-,r\ell^+),[\ell^-,\ell^+)}).\]

\subsection{}\label{subsec-proof-cohomology}
  Recall from \S \ref{subsec-def-all-closed} that the negative
bundle $\Gamma_r$ over $\Sigma_r$ is orientable if $M$ is orientable, and that
the exponential defines a diffeomorphism $e_r$ of a sufficiently small disk bundle
and its bounding sphere bundle, $(D\Gamma_r, \partial D\Gamma_r)$ onto a submanifold
with boundary, $(\Sigma_r^-,\partial \Sigma_r)$ in $\L^{\le r\ell}$,
such that $e_r(D\Gamma_r - \Sigma_r) \subset \L^{< r\ell}$ (where $\Sigma_r$
is the zero section). Using the homotopy equivalence $Q:\L^{\le r\ell}
 \to \A^{\le r\ell}$ of Proposition \ref{prop-PPAL-space}, we may assume that
$\Sigma_r^- \subset\A^{\le r\ell}$ so we obtain isomorphisms which we also denote by
\[h_r: H^i(\Sigma_r) \cong H^{i+\lambda_r}(\Sigma_r^-, \partial \Sigma_r^-) \cong
H^{i+\lambda_r}(\A^{\le r\ell}, \A^{<r\ell}).\]
Moreover, equation (\ref{eqn-diffeo}) gives a diffeomorphism
$\tau: \Sigma_{r+1}^- \to \Sigma_r^- \times_M \Sigma_1^-\cong \Sigma_r^-*\Sigma_1^-.$
The following diagram may help in sorting out these different spaces.
\begin{diagram}[size=2em]
\Sigma_{r+1} & \subset & \Sigma_{r+1}^- & \subset & \A^{\le (r+1)\ell} \\
\dTo && \dTo && \dTo^{\eta}  \\
\Sigma_r*\Sigma_1 &\ \subset\ & \Sigma_r^- *\Sigma_1^- &\subset& \A^{\le r\ell,\le \ell}_{\r} &\\
\dTo && \dTo &&\dTo&\\
\F_{\r}^{[r\ell^-,r\ell^+),[\ell^-,\ell^+)} &\quad \subset\quad &
\F_{\r}^{<r\ell^+,<\ell^+}&
\quad \subset\quad & \A_{\r}^{<r\ell^+,<\ell^+}\end{diagram}
In order to compact the notation, for the rest of the section we will write
\[ H^*(Y,\bs A) \ \text{ for }\ H^*(Y, Y-A).\]

\subsection{}\label{subsec-huge-diagram}
We are now in a position to expand the diagram in Theorem \ref{thm-stronger-cohomology}.
This is accomplished in Figure \ref{fig-expanded}.
Here, $j=a+b+\lambda_r +\lambda_1$ so that $j+n-1 = a+b+\lambda_{r+1}.$
Each of the rectangles in this diagram is obviously commutative except possibly
for the portion denoted $\boxed{1},$ which we now explain, as it involves
the somewhat mysterious degree shift of 1, and its relationship to the mapping $J_r.$
\begin{figure}
{\tiny\begin{diagram}[size=2.2em,PostScript]
H^a(\Sigma_r)\otimes H^b(\Sigma_1) & \rTo & H^{a+\lambda_r}(\Sigma_r^-, \bs \Sigma_r)
\otimes H^{b+\lambda_1}(\Sigma_1^-, \bs \Sigma_1) & \lTo &
H^{a+\lambda_r}(\A^{<r\ell^+},\A^{<r\ell^-}) \otimes H^{b+\lambda_1}(\A^{<\ell+},
\A^{<\ell^-})\\ 
\dTo_{\times} && \dTo_{\times} && \dTo_{\times}\\
H^{a+b}(\Sigma_r \times \Sigma_1) & \rTo_{\cong} & H^{j}(\Sigma_r^- \times \Sigma_1^-, \bs
\Sigma_r \times \Sigma_1)
& \lTo & H^{j}\left((\A^{<r\ell^+},\A^{<r\ell^-})\times
(\A^{<\ell^+}\times \A^{<\ell^-})\right) &\\ 
\dTo && \dTo && \dTo\\
H^{a+b}(\Sigma_r \times_M \Sigma_1) & \rTo_{\cong} & H^{j}(\Sigma_r^- \times_M \Sigma_1^-,
\bs \Sigma_r \times_M \Sigma_1) & \lTo & H^{j}(\F_{\A}^{<r\ell^+,<\ell^+}, \bs
\F_{\A}^{[r\ell^-,r\ell^+),[\ell^-,\ell^+)})\\  
\dTo_{\phi_{r/(r+1)}^*}^{\cong}  && \dTo_{\phi_{r/(r+1)}^*}^{\cong} &&
\dTo_{\phi_{r/(r+1)}^*}^{\cong}\\
H^{a+b}(\Sigma_r*\Sigma_1)& \rTo_{\cong} &
H^{j}(\Sigma_r^-*\Sigma_1^-, \bs \Sigma_r*\Sigma_1) &\lTo&
H^{j}(\F_{\r}^{<r\ell^+,<\ell^+}, \bs \F_{\r}^{[r\ell^-,r\ell^+),[\ell^-,\ell^+)})\\ 
 &&  && \dTo_{\smile \mu_{TM}}^{(\ref{eqn-A-isomorphism2})}  \\
\dTo && \dTo^{\S\ref{cor-relative-coGysin}}_{\smile\mu_{\Upsilon}} &\boxed{1}&
H^{j+n}(\A_{\r}^{<r\ell^+,<\ell^+}, \bs \F_{\r}^{[r\ell^-,r\ell^+),[\ell^-,\ell^+)})\\ 
&& && \dTo_{J_r^*}^{(\ref{eqn-JA-pairs})} \\
H^{a+b}(\Sigma_{r+1}) & \rTo_{\cong} & H^{j+n-1}(\Sigma_r^-*\Sigma_1^-, \bs \Sigma_{r+1})
 && H^{j+n}\left((\A^{<(r+1)\ell^+},
\A^{<(r+1)\ell^-}) \times (I', \partial I') \right)\\ 
\dTo^{\cong}&&\dTo_{\cong}^{(\ref{eqn-diffeo})} &&\uTo^{\cong}_{\smile\mu_{\mathbf O}}  \\
H^{a+b}(\Sigma_{r+1}) &\rTo_{\cong}& H^{j+n-1}(\Sigma_{r+1}^-, \bs \Sigma_{r+1})&
\rTo_{\cong}& H^{j+n-1}(\A^{\le(r+1)\ell}, \bs \Sigma_{r+1})
\end{diagram}}\caption{$\circledast$ product}\label{fig-expanded}
\end{figure}

The figure eight space $\F$ has a normal bundle in $\L$ that is isomorphic
to the (pullback of the) normal bundle of the diagonal in $M \times M,$ and hence
to the tangent bundle $TM$ of $M.$  Its Thom class is denoted $\mu_{TM}$ and the
Thom isomorphism (\ref{eqn-A-isomorphism2}) is given by the cup product with this
Thom class.  The normal bundle of $\Sigma_{r+1}$ in $\Sigma_r \times_M \Sigma_1$ is
denoted $\Upsilon.$  The Gysin mapping (labeled \S\ref{cor-relative-coGysin} in the
diagram) is given by the cup product with the Thom class $\mu_{\Upsilon}.$  The
K\"unneth isomorphism at the lower right corner of the diagram is given by the
cup product with the generator of $H^1(I',\partial I')$ which may be identified
with the Thom class $\mu_{\mathbf O}$ of the trivial one dimensional
bundle $\mathbf O$ on the interval $I'.$

So to prove that $\boxed{1}$ commutes we need to compare the Thom class
$\mu_{TM}$ with the product of Thom classes $\mu_{\Upsilon} \cup \mu_{\mathbf O}.$
It suffices to construct a vector bundle isomorphism
$J_r^*(TM) \cong \Upsilon \oplus {\mathbf O}.$

 The critical set $\Sigma_{r+1}$ is a submanifold of codimension
$n-1$ in $\Sigma_r * \Sigma_1.$  A point in the latter space is an $r$-fold
iterate of a prime closed geodesic followed by a prime closed geodesic with
the same base point, all parametrized proportionally with respect to arclength,
so it is determined by a triple $(p,u,v)$ where $p \in M$ and $u,v\in S_p$ are unit
tangent vectors at $p.$  This point lies in $\Sigma_{r+1}$ if and only if $u=v.$
It follows that the normal bundle $\Upsilon$ of $\Sigma_{r+1}$ in
$\Sigma_r * \Sigma_1$ may be naturally identified with the bundle
$\ker(d\pi)$ of tangents to the fibers of the projection $\pi:SM \to M.$
But there is another way to view this bundle.

Let $\nu$ be the tautological (trivial) bundle over $SM$ whose fiber at the
point $v\in S_p$ is the 1-dimensional span $\langle v \rangle \subset T_pM.$
Then $\pi^*(TM) \cong \nu \oplus \nu^{\perp}$ where
$\nu^{\perp}$ is the bundle whose fiber over $v \in S_p$ is $v^{\perp}.$
For any $v\in S_p$ the inclusion of the unit tangent sphere
$S_p \subset T_pM$ induces an injection $\ker(d\pi)
\hookrightarrow T_{p}M$ whose image is $\nu^{\perp}.$  In this way we obtain a
canonical isomorphism $\Upsilon \cong \nu^{\perp}$ and therefore an isomorphism
$\pi^*(TM) \cong \Upsilon \oplus\nu.$

Consider the restriction $J_r:\Sigma_{r+1} \times I' \to \L,$ say $\beta = J_r(\alpha,s).$
Then $\frac{\partial J_r}{\partial s}(\alpha,s)$ is a vector field along $\beta$
that is a multiple of the tangent vector $\beta'$ since the $s$ factor only changes the
parametrization.  This gives an isomorphism between $J_r^*(\nu)$ and the trivial
1-dimensional tangent bundle $TI'$ on $\Sigma_{r+1} \times I'.$ In summary we have
constructed an isomorphism $J_r^*(TM) \cong \Upsilon \oplus {\mathbf O}.$   This completes the
proof
that the diagram in
Theorem \ref{thm-stronger-cohomology}
commutes, so it completes the proof of Theorem \ref{thm-cohomology-nontrivial}.  \qed

\subsection{Level cohomology ring}  Continue with the assumption that all
geodesics on $M$ are simply periodic with the same prime length $\ell.$
Define the filtration  $H^*(\L,\L_0;G)= I^0 \supset I^1 \supset \cdots$ by
\[ I^r =\text{Image}\left(H^*(\L,\L^{\le r\ell};G)\to H^*(\L,\L_0;G)\right).\]
Each $I^r \subset H^*(\L,\L_0)$ is an ideal (with respect to the $\circledast$
product) and $I^r \circledast I^s \subset I^{r+s}.$
Since the Morse function is perfect it induces an isomorphism
\[ Gr^IH^*(\L,\L_0) \cong \underset{r \ge 1}{\oplus}
H^*(\L^{\le r\ell}, \L^{<r\ell})\]
between the associated graded ring and the level cohomology ring.  Let $H^*(SM;G)$
denote the cohomology ring of the unit sphere bundle of $M.$

\begin{cor}\label{cor-cohomology-graded}
The mapping (cf.~ \S \ref{subsec-polynomial-level})
\[
 \Psi: H^*(SM;G)[T]_{\ge 1} \to \text{Gr}^IH^*(\L,\L_0;G) =
\underset{r \ge 1}{\oplus} H_*(\L^{\le r\ell},\L^{<r\ell})\]
given by
\[ \Psi(aT^m) = h_1(a)\circledast\Omega^{\circledast (m-1)} \in H^{\deg(a)+\lambda_1+(m-1)b}
(\L^{\le (m)\ell},\L^{<(m)\ell};G)\]
is an isomorphism of rings.
\end{cor}
\proof  Just as in Corollary \ref{cor-level-ring}, the $\circledast$ product on $\Sigma_1$
may be identified with the cup product because the diagram in \S
\ref{subsec-huge-diagram} commutes.\qed

\subsection{Based loop space} \label{sec-based-loop-coproduct-nontrivial}
Let $\Omega=\Omega_{x_0}\subset \L$ denote the space of
loops in $M$ that are based at $x_0.$  Suppose as above that all geodesics on
$M$ are simply periodic with the same prime length $\ell.$  Since the index
growth is maximal, cf.~(\ref{eqn-lambda-r}), the index of each critical point
in $\Omega$ is the same as that in $\L$, cf.~Lemma \ref{lem-based-index}.
The critical set $\Sigma_r^{\Omega} \subset\Omega$ at level $r\ell$ is
parametrized by the unit sphere $S^{n-1}\subset T_{x_0}M.$
The arguments of the preceding section may be applied to the based loop space
with its product $\circledast$ (cf. Proposition \ref{prop-based-cohomology})
and we conclude that {\em the cohomology algebra $(H^*(\Omega_{x_0},x_0), \circledast)$ 
is filtered by the energy and the associated graded algebra is isomorphic to the
polynomial algebra $H^*(\Sigma_1^{\Omega})[T]$ where $\deg(T) = b = \lambda_1+n-1$ 
and where $H^*(\Sigma_1^{\Omega})$ is the cohomology algebra of the sphere
$S^{n-1}.$  The restriction mapping $H^*(\L) \to H^*(\Omega)$ induces the
mapping on the associated graded algebras
\begin{equation}\label{eqn-graded-pointed}
 H^*(SM)[T] \to H^*(S^{n-1})[T]\end{equation}
which is determined by the restriction homomorphism }
\begin{equation}\label{eqn-restriction-cohomology}
H^*(\Sigma_1) \to H^*(\Sigma_1^{\Omega}).\end{equation}
If $M=S^n$ is the n-sphere,  the restriction mapping 
(\ref{eqn-restriction-cohomology}) is surjective when 
$n$ is odd.

\section{Related products}\label{sec-bracket}
\subsection{} \label{subsec-CS-bracket}
Composing the K\"unneth isomorphism with the action 
$S^1 \times \L \to \L$ gives a map
\[ \Delta_*:H_i(\L;G) \to H_{i+1}(\L;G)\ \text{ and }\
\Delta^*:H^i(\L;G) \to H^{i-1}(\L;G)\]
for any coefficient group $G.$  Then \cite{CS} define $H_i(\L) \times
H_j(\L) \overset{\left\{\cdot,\cdot\right\}}{\longrightarrow} H_{i+j-n+1}(\L)$ such that
\begin{equation}\label{eqn-CS-bracket}
 \left\{ \sigma, \delta \right\} =
(-1)^{|\sigma|}\Delta_*(\sigma * \delta) - (-1)^{|\sigma|}\Delta_*(\sigma)*\delta
- \sigma * \Delta_*(\delta)\end{equation}
where $|\sigma| = i-n$ if $\sigma \in H_i(\L).$  They prove that the bracket is
(graded) anti-commutative, it satisfies the (graded) Jacobi identity, and it is
a derivation in each variable, that is,
\begin{enumerate}
\item $\left\{ \sigma,\tau \right\} = -(-1)^{(|\sigma|+1)(|\tau|+1)}
\left\{ \tau, \sigma \right\}$
\item $ \left\{\sigma, \left\{ \tau, \omega\right\} \right\} =
\left\{\left\{ \sigma, \tau\right\}, \omega\right\}
+ (-1)^{(|\sigma|+1)
(|\tau|+1)}\left\{\tau, \left\{ \sigma, \omega \right\} \right\}$
\item $\left\{\sigma, \tau *\omega\right\} = \left\{ \sigma,\tau\right\}
* \omega + (-1)^{|\tau|(|\sigma|+1)}\tau *
\left\{\sigma,\omega\right\}.$
\end{enumerate}
Since $\Delta_*$ preserves the energy, it follows that {\em the bracket operation is also
defined on the relative homology groups $\H_*(\L^{\le a}, \L^{\le a'};G)$ and it satisfies
the energy estimates of Proposition \ref{prop-product-extended}.}

Similarly, we may define  
$H^i(\L,\L_0) \times H^j(\L,\L_0) 
\overset{\left\{\cdot,\cdot\right\}}{\longrightarrow}H^{i+j+n-2}(\L,\L_0)$ by
\begin{equation}\label{eqn-cohomology-bracket}
 \left\{ \tau,\omega\right\} = (-1)^{|\tau|} \Delta^*(\tau \circledast \omega)
- (-1)^{|\tau|} \Delta^*(\tau)\circledast \omega - \tau\circledast\Delta^*(\omega)\end{equation}
where $|\tau| = i+n-1$ if $\tau \in H^i(\L).$
\begin{thm}\label{thm-X}
The cohomology bracket satisfies the following for any $\sigma,\tau,\omega \in H^*(\L,\L_0)$.  
\begin{enumerate}\renewcommand{\labelenumi}{(\Alph{enumi})}
\item $\left\{\tau,\omega\right\} = - (-1)^{(|\tau|+1)(|\omega|+1)}
\left\{\omega,\tau \right\}$
\item $\left\{\sigma, \left\{\tau,\omega\right\}\right\} =
\left\{ \left\{\sigma,\tau\right\},\omega\right\} + 
(-1)^{(|\tau|+1)(|\omega|+1)}\left\{\tau,\left\{\sigma,\omega\right\}\right\}$
\item $\left\{\sigma, \tau\circledast\omega\right\} = \left\{ \sigma,\tau\right\}
\circledast \omega + (-1)^{|\tau|(|\sigma|+1)}\tau \circledast
\left\{\sigma,\omega\right\}.$
\end{enumerate}\end{thm}
\proof  Part (A) follows directly from the definition.  The proof of parts (B)
and (C) will appear in Appendix \ref{sec-proof-of-theoremX}.

\subsection{Nondegenerate case}  \label{subsec-bracket-nondegenerate}
As in Theorem \ref{prop-level-products},
assume the manifold $M$ is orientable, $\gamma$ is a closed geodesic such that all
of its iterates are nondegenerate, and assume the negative bundle $\Gamma_r$ 
is orientable for all $r,$ cf. footnote \ref{foot1}. Let $a = L(\gamma).$ Assume $r \ge 2.$  
Let $\sigma_r, \ov\sigma_r, \tau_r, \ov \tau_r$ be the local (level) homology and 
cohomology classes described in equation (\ref{eqn-local-level-classes}). In the local level 
(co)homology group $H(\L^{<ra}\cup \Sigma_r, \L^{<ra})$ we have:
\begin{align*}
\Delta_*(\sigma_r) = r\ov\sigma_r;\qquad & \Delta_*(\ov\sigma_r)=0\\
\Delta^*(\tau_r) = r \ov\tau_r;\qquad & \Delta^*(\ov\tau_r)=0.
\end{align*}
Using Theorem \ref{prop-level-products}, 
if $\lambda_{j+k} = \lambda_{j+k}^{\min}$ then in 
$H_*(\L^{<(j+k)a}\cup\Sigma_{j+k}, \L^{<(j+k)a})$ we have
\begin{align*}
\left\{ \sigma_j,\sigma_k\right\} &= -(k+(-1)^{|\sigma_1|}j)\sigma_{j+k} \\
\left\{ \sigma_j,\ov\sigma_k\right\} &= (-1)^{|\sigma_1|}k \ov \sigma_{j+k}
\intertext{while if $\lambda_{j+k} = \lambda_{j+k}^{\max}$ then in 
$H^*(\L^{<(j+k)a}\cup\Sigma_{j+k}, \L^{<(j+k)a})$ we have}
\left\{ \tau_j, \tau_k\right\} &= (-k+(-1)^{|\tau_1|}j)\tau_{j+k}\\
\left\{\tau_j, \ov\tau_k\right\} &= (-1)^{|\tau_1|}k\ov\tau_{j+k}.
\end{align*}

\subsection{Equivariant homology and cohomology}
As in \cite{CS}, one may consider the $T=S^1$-equivariant homology
$H_*^{T}(\L)$ of the free loop space $\L.$  Let $ET \to BT$ be the classifying
space and universal bundle for $T=S^1$; it is the limit of finite dimensional
approximations $S^{2n+1} \to \mathbb CP^n$ and let $\pi:\L \times ET \to \L_T =
\L \times_T ET$ be the Borel construction.  There are Gysin (exact) sequences
(\cite{Spanier} \S 5.7 p. 260) with coefficients in $\mathbb Z,$
\begin{diagram}[size=2em]
\phantom{.}&\rTo&H_{i+1}(\L)&\rTo^{\pi_*}&H_{i+1}^T(\L)&\rTo&H_{i-1}^T(\L)&\rTo^{\pi^*}&H_{i}(\L)
&
\rTo& \\
&\rTo&H^i_T(\L) & \rTo_{\pi^*}& H^i(\L) &\rTo_{\pi_*}&H^{i-1}(\L)&\rTo& H^{i+1}(\L) & \rTo&.
\end{diagram}

The Chas-Sullivan ``string bracket'' (homology) product on equivariant homology 
is defined to be $(-1)^{i-n}$ times the composition
\begin{diagram}[size=2em]
H_i^T(\L) \times H_j^T(\L) &\rTo^{\pi^*\times\pi^*}& H_{i+1}(\L) \times H_{j+1}(\L)
&\rTo^{*}& H_{i+j+2-n}(\L) &\rTo^{\pi_*}& H_{i+j+2-n}^T(\L),\end{diagram}
that is, $[\sigma,\delta] = (-1)^{|\sigma|}\pi_*\left(\pi^*(\sigma)*\pi^*(\delta)\right).$
The action of $T=S^1$ preserves the energy function, so {\em the (homology) string bracket 
extends to products on relative homology
$\H_*^T(\L^{\le a}, \L^{\le a'})$ and on level homology $\H_*^T(\L^{\le a},
\L^{<a})$ which satisfy the same energy estimates as those in Proposition
\ref{prop-product-extended}.}
\quash{  
For any $0 \le a' <a \le \infty$ and $0 \le b' < b \le \infty$ the Chas-Sullivan
product restricts to products
\begin{diagram}[size=2em]
\H_i^T(\L^{\le a},\L^{\le a'};G) \times \H_j^T*(\L^{\le b}, \L^{\le b'};G)
&\rTo^{[\cdot,\cdot]}& \H_{i+j+2-n}^T(\L^{\le a+b}, \L^{\le \max(a+b',a'+b)};G)\\
\H_i^T(\L^{\le a}, \L^{<a};G) \times \H_j^T(\L^{\le b}, \L^{<b};G)
&\rTo^{[\cdot,\cdot]}& \H_{i+j+2-n}^T(\L^{\le a+b}, \L^{<a+b}).\end{diagram}
\end{prop}  } 

Similarly the (cohomology) product $\circledast$ gives rise to a product in equivariant cohomology
as $(-1)^{i+n-1}$ times the composition
\begin{diagram}[size=2em]
H^i_T(\L)\times H^j_T(\L) &\rTo^{\pi^*\times\pi^*}& H^i(\L) \times H^j(\L) 
&\rTo^{\circledast}& H^{i+j+n-1}(\L) &\rTo^{\pi_*}& H^{i+j+n-2}_T(\L),\end{diagram}
or $\tau \circledcirc \omega = (-1)^{|\tau|}\pi_*\left(\pi^*(\tau)
\circledast \pi^*(\omega)\right).$  It also gives products in relative
equivariant cohomology $\H^*_T(\L^{\le a}, \L^{\le b})$ 
with energy estimates as in Proposition \ref{prop-coproduct-levels}. 

\subsection{}
The string bracket is discussed in \cite{CS}, p. 24 in the case when $M$ is a
surface of genus $>1.$  When $n=2$ it gives a non-trivial map
\begin{diagram}[size=2em]
H_0^T(\L) \times H_0^T(\L) &\rTo^{[\cdot,\cdot])}& H_0^T(\L)\end{diagram}
which turns out to be a product discovered by Goldman \cite{Goldman}
and Wolpert \cite{Wolpert}.  In this case the
equivariant cohomology product $\circledcirc$ is also non-trivial in degree zero,
\begin{diagram}[size=2em]
 H^0_T(\L,\L_0) \times H^0_T(\L,\L_0) &\rTo^{\circledcirc}& H^0_T(\L,\L_0).
\end{diagram}
The group $H^0_T(\L,\L_0)$ can be identified with the set of maps from the set
of free homotopy classes of loops in $M$ to the coefficient group $G,$ which
take the homotopy class of trivial loops to the identity element in $G.$
 We give
a simple example:  consider the case when $\alpha,\beta\in H^0_T(\L,\L_0)$
are given by $\cap A$ and $\cap B$ where $A$ and $B$ are disjoint simple
closed loops in $M.$  Assume that $M$ also supports loops $A'$ and $B'$ with
\begin{align*}
  A \cap A' &= B \cap B' = 1\\
  A\cap B' &= B \cap A' = A \cap B = 0.
\end{align*}
Let $C \in \L.$  By
chasing through the definitions we see that the class
\[ \alpha \circledcirc \beta \in H^0_T(\L,\L_0)\]
has the following properties, when considered as a map from the set of free
homotopy classes of loops in $M$ to $G:$ \begin{enumerate}
\item $\langle\alpha \circledcirc \beta, [C]\rangle = 0$ if $C$ is an embedded loop,
\item $\langle\alpha \circledcirc \beta, [C]\rangle=1$ if there exist loops 
$A^{\prime\prime}, B^{\prime\prime}$, homotopic to $A^{\prime},B^{\prime}$
(respectively), which intersect transversally at the point 
$A^{\prime\prime}(0) = B^{\prime\prime}(0)$ such that $C$ is homotopic to
the composed loop $A^{\prime\prime}\cdot B^{\prime\prime}.$
\end{enumerate}
This product appears to be related to the Turaev cobracket described in
\cite{Chas}, p. 27.

\appendix

\section{\Cech homology and cohomology}\label{sec-homology}
\subsection{}
Throughout this paper, the symbols $H_i$ and $H^j$ denote the singular homology
and cohomology while $\H_i$ and $\H^j$ denote the \Cech homology and
cohomology as described, for example in \cite{Eilenberg} \S 9, \cite{Dold} p. 339, 
\cite{Bredon} p. 315 (\Cech homology), and \cite{Spanier} \S 6.7 Ex. 14 p. 327 
(\Cech cohomology).

The problem is that the space $\Lambda^{\le a}$ and even its finite dimensional
approximation $\M_N^{\le a}$ might be pathological if $a$ is a
critical value of the function $F.$  However, for each regular value $a + \epsilon$
the space $\Lambda^{\le a + \epsilon}$ has the homotopy type of a finite simplicial
complex.  Thus one might hope to describe the homology and cohomology of
$\Lambda^{\le a}$ using a limiting process.  The \Cech homology and cohomology are
better behaved under limiting processes than the singular homology and cohomology.
Unfortunately, the \Cech homology does not always satisfy the exactness axiom for a
homology theory (although the \Cech cohomology does satisfy the exactness axiom).
These issues are explained in detail in \cite{Eilenberg}.  We now review the
relevant properties of these homology theories that are used in this article.

\subsection{}  \label{subsec-Cech-limits}
Let $G$ be an Abelian group.  If $A \subset X$ are topological
spaces then the composition of any two homomorphisms in the homology sequence
for the pair $\H_*(X,A;G)$ is always zero.  If $X$ and $A$ are compact and if
$G$ is finite or if $G$ is a field then the homology sequence for $\H_*(X,A;G)$
is exact.

If a topological space $X$ has the homotopy type of a finite simplicial complex then
the natural transformations $H_j(X;G) \to \H_j(X;G)$ and $\H^j(X;G) \to H^j(X;G)$
are isomorphisms for all $j.$

By \cite{Hatcher} Thm 3.33, if a topological space $X$ is an increasing union of
subspaces $X_1 \subset X_2 \subset \cdots$ and if every compact subset
$K\subset X$ is contained in some $X_n$ then for all $j$ the inclusions $X_n \to X$
induces isomorphisms
\[H_j(X;G)  \cong \underset{\longrightarrow}{\lim}H_j(X_n;G)\
\text{ and } \H_j(X;G)  \cong \underset{\longrightarrow}{\lim}\H_j(X_n;G).\]

\subsection{}\label{subsec-limits}
Let $A$ be a closed subset of a paracompact Hausdorff space $X$.
Let $U_1 \supset U_2 \supset \cdots$
be a sequence of subsets of $X$ such that $\cap_{n=1}^{\infty} U_n = A.$
Then the following table describes sufficient conditions that
\[ \H^q(A;G) \cong \underset{\longrightarrow}{\lim} \H^q(U_n;G)\
\text{ and }
\H_q(A;G) \cong \underset{\leftarrow}{\lim} \H_q(U_n;G).\]

\begin{center}
\begin{tabular}{|c||c|c|}\hline
& $U_n$ open & $U_n$ closed \\
\hline\hline
cohomology & no restriction & $X$ is compact \\
homology & X is a manifold & $X$ is compact\\
\hline
\end{tabular}\end{center}

These facts are classical and the proofs may be found in the textbooks, for
example \cite{Eilenberg} \S IX, \S X, \cite{Spanier} \S 6.6 Thm 2, Thm 6;
\cite{Dold} VIII \S 6.18, \S 13.4, \S 13.16; and \cite{Bredon}.
(By \cite{Spanier} \S 6.8 Cor. 8, the \Cech cohomology coincides with the
Alexander-Spanier cohomology on the class of paracompact Hausdorff spaces.)

For the remainder of this Appendix, continue with the notation $M, \L, F,
\Sigma$ of \S \ref{sec-freeloop}.
\begin{lem}\label{lem-cech} Let $G$ be an Abelian group and let $a \in \mathbb R.$
Then the natural homomorphisms
\[ H_*(\L;G) \to \H_*(\L;G) \ \text{ and }\
H_*(\L^{<a};G) \to \H_*(\L^{<a};G)\]
are isomorphisms.  If $a \in \mathbb R$ is a regular value of
$F,$ or if $a$ is a nondegenerate critical value of $F$ in the sense of Bott,
then the morphism $H_*(\L^{\le a};G) \to \H_*(\L^{\le a};G)$ is an isomorphism.
The same statements hold for \Cech cohomology.
\end{lem}
\proof  This follows from Proposition \ref{prop-fda}: the space
$\L^{<a}$ has the homotopy type of a finite dimensional manifold, and if $a$
is a regular value then $\L^{\le a}$ is homotopy equivalent to a
finite dimensional compact manifold with boundary.  \qed

\begin{lem} \label{lem-approximation}
If $a' < a \in \mathbb R$ then the inclusion $\L^{\le a}
\to \L^{\le a+\epsilon}$ induces canonical isomorphisms
\begin{eqnarray}
 \H_i(\L^{\le a};G) &\cong& \underset{0\longleftarrow\epsilon}{\lim}H_i(\L^{<a+
\epsilon};G)\label{eqn-lim1}\\
\H_i(\L^{\le a},\L^{\le a'};G) &\cong& \label{eqn-lim2}
\underset{0\longleftarrow\epsilon}{\lim}H_i(\L^{<a+\epsilon},\L^{< a'+\epsilon};G)
\end{eqnarray}
with \Cech homology on the left and singular homology on the right.  If $G$ is a
field, $\alpha \in H_i(\L;G)$ and if $a = \Cr(\alpha)$ is its critical value
(\S \ref{sec-cr-def}) then there exists $\omega \in \H_i(\L^{\le a};G)$ which
maps to $\alpha.$
\end{lem}
\proof
By Proposition \ref{prop-fda} the space $\L^{\le a}$ is homotopy equivalent to
the finite dimensional space $\M_N^{\le a}$ which is contained in a manifold.
Therefore
\[ \H_i(\L^{\le a})\cong H_i(\M_N^{\le a}) =
\underset{0\longleftarrow\epsilon}{\lim} H_i(\M_N^{< a+\epsilon}) \cong
\underset{0\longleftarrow\epsilon}{\lim} H_i(\L^{\le a+\epsilon})\]
which proves (\ref{eqn-lim1}).  The relative case (\ref{eqn-lim2}) is similar.  
Now suppose $G$ is a field and let $b_n\downarrow a=\Cr(\alpha)$
be a convergent sequence of regular values of $F.$ Then $\H_i(\L^{\le a};G)$ is
the limit of the sequence of finite dimensional vector spaces
\[ H_i(\L^{\le b_1}) \leftarrow H_i(\L^{\le b_2}) \leftarrow H_i(\L^{\le b_3}) 
\leftarrow \cdots\]
and for each $n\ge 1$ there is an element $\omega_n\in H_i(\L^{\le b_n})$ that
maps to $\alpha.$  Let $H_n = \text{Image}\left(
H_i(\L^{\le b_n};G) \to H_i(\L^{\le b_1};G)\right).$  These form a decreasing chain of
finite dimensional vector spaces which therefore stabilize after some finite point, say,
\[ H_N = \text{Image}\left(H_i(\L^{\le b_N};G) \to H_i(\L^{\le b_1};G)\right)
= \cap_{n=1}^{\infty} H_n = \H_i(\L^{\le a};G).\]
It then suffices to take $\omega \in H_N$ to be the image of $\omega_N \in 
H_i(\L^{\le b_N};G).$  \qed

\begin{prop}\label{prop-level}  
Fix $c\in \mathbb R.$  Let $U \subset \L$ be a neighborhood of $\Sigma^{=c}.$ 
The inclusions
\[ (\L^{<c}\cup  \Sigma^{=c})\cap U \hookrightarrow \L^{<c} \cup
\Sigma^{=c} \hookrightarrow \L^{\le c}\]
induce isomorphisms on \Cech homology,
{\footnotesize\begin{diagram}[height=2em]
\H_i((\L^{<c}\cup  \Sigma^{=c})\cap U, \L^{<c} \cap U;G) &\rTo&
\H_i(\L^{\le c}\cap U, \L^{< c}\cap U;G) \\
\dTo_{\beta} && \dTo_{\gamma} \\
\H_i(\L^{<c}\cup  \Sigma^{=c}, \L^{<c};G) & \rTo^{\alpha} & \H_i(\L^{\le c},
\L^{<c};G) &\rTo^{\tau}&\lim_{0 \leftarrow \epsilon}H_i(\L^{<c+\epsilon}, \L^{<c};G)
\end{diagram}} 
\end{prop}

\begin{proof}
It follows from excision that the relative homology group
 $\H_i((\L^{<c}\cup \ \Sigma^c)\cap U, \L^{<c} \cap U)$ is independent of $U.$
Taking $U=\L$ gives the isomorphism $\beta.$  The same argument applies to the
isomorphism $\gamma.$ The mapping $\tau$ is an isomorphism by \S
\ref{subsec-limits}.  Finally, the mapping $\alpha$ is an isomorphism because
the inclusion $(\L^{<c}\cup \Sigma^c, \L^{<c}) \to (\L^{\le c},\L^{<c})$ is a
homotopy equivalence.  A homotopy inverse is given by the time $t$ flow
$\psi_t:\L^{\le c} \to \L^{\le c}$ of the vector field $- \grad(F),$ for
any choice of $t>0$  (cf.~\cite{Klingenberg} \S 1, \cite{Chang} \S I.3).
\end{proof}

\section{Thom isomorphisms}
\subsection{}\label{subsec-tubes}  The constructions in this paper necessitate
the use of various relative versions of the Thom isomorphism for finite and
infinite dimensional spaces in singular and \Cech homology and cohomology.  
In this section we review these standard facts.

Recall that a neighborhood $N$ of a closed subset $X$ of a
topological space $Y$ is a {\em tubular neighborhood} if there exists a
finite dimensional (``normal'') real vector bundle $\pi:E \to X,$ and a
homeomorphism $\psi: E \to N\subset Y$  which takes the zero section to
$X$ by the identity mapping.
In this case, excising $Y-N$ gives an isomorphism
\[H(E,E-X;G)\cong H(N, N-X;G) \cong H(Y, Y-X;G)\]
where $H$ denotes either singular homology or cohomology (with coefficients in
an Abelian group $G$) and where $E-X$ is the complement of the zero section. Let
us take the coefficient group to be $G = \mathbb Z$ if the normal bundle $E$ is
orientable (in which case we fix an orientation), and $G = \mathbb Z/(2)$
otherwise. The Thom class
\[ \mu_E \in H^n(E,E-X;G)\]
is the unique cohomology class which restricts to the chosen homology generator
of each fiber $\pi^{-1}(x).$  The cup product with this class gives
the Thom isomorphism in cohomology,
\begin{equation}\label{eqn-Thom-cohomology}
 H^i(X;G) \cong H^i(E;G) \to H^{i+n}(E,E-X;G)\cong H^{i+n}(Y, Y-X;G)\end{equation}
and the cap product with this class gives the Thom isomorphism in homology,
\begin{equation}\label{eqn-Thom-homology}
 H_i(X;G) \cong H_i(E;G) \leftarrow H_{i+n}(E,E-X;G) \cong H_{i+n}(Y,Y-X;G).
\end{equation}
See [Spanier] Chapt.~5 Sec.~7 p.~259.  The same results hold for
\Cech homology and cohomology. We need to establish relative versions of
these isomorphisms.

As usual we take the coefficient group $G$ to be either
$\mathbb Z$ or $\Z/(2).$

\begin{prop}\label{prop-Thom}
 Let $A \subset X$ be closed subsets of a topological space $Y.$
Assume that $X$ has a tubular neighborhood $N$ in $Y$
corresponding to a homeomorphism $\phi:E \to N$ of a normal bundle
$E \to X$ of fiber dimension $n.$ If $E$ is orientable then choose an
orientation and set $G = \mathbb Z,$ otherwise set $G = \mathbb Z/(2).$
Then the Thom isomorphism induces an isomorphism
\begin{equation}\label{eqn-relative-Thom-cohomology}
 H^i(X,X-A;G) \cong H^{i+n}(Y,Y-A;G)\end{equation}
in singular cohomology, and an isomorphism
\begin{equation}\label{eqn-relative-Thom-homology}
H_i(X,X-A;G) \cong H_{i+n}(Y,Y-A;G)\end{equation}
in singular homology.  Taking $A=X$ gives Gysin homomorphisms
\begin{align}\label{eqn-Gysin-cohomology}
H^i(X;G) &\cong H^{i+n}(Y,Y-X;G) \to H^{i+n}(Y;G)\\
\label{eqn-Gysin-homology}
H_{i+n}(Y;G) &\to H_{i+n}(Y,Y-X;G) \cong H_i(X;G)\\
\intertext{denoted $h_!$ and $h^!$ respectively, where $h:X\to Y$
denotes the inclusion.  If $U\subset Y$ is open then taking 
$A = X - X\cap U$ gives Gysin homomorphisms}
H^i(X,X\cap U;G) &\cong H^{i+n}(Y,Y-A;G) \to H^{i+n}(Y,U;G)\label{rel-Gysin-cohomology} \\
H_{i+n}(Y,U;G) &\to H_{i+n}(Y,Y-A;G) \cong H_i(X,X \cap U;G)
\label{eqn-relative-Gysin} \end{align}
 \end{prop}

\proof  We will describe the argument for
(\ref{eqn-relative-Thom-cohomology}); the argument
for (\ref{eqn-relative-Thom-homology}) is the same, with the arrows reversed.
We suppress the coefficient group $G$ in order to simplify the notation
in the following argument.
Let $E^A = \pi^{-1}(X-A) \subset E$ and let $E^0 = E-X.$
The sets $E^0,E^A$ are open in $X$ so they form an
excisive pair\footnote{A pair $A,B\subset X$ is excisive if
$A \cup B = A^o \cup B^o$ where $A^o$ denotes the relative interior of $A$ in
$A\cup B$, cf.~\cite{Spanier} p. 188} giving the excision isomorphism
\[ \begin{CD}
H^i(E^0 \cup E^A, E^0) @>{\cong}>> H^i(E^A, E^0 \cap E^A).\\
\end{CD}\]
The cup product with the Thom class $\mu_E \in H^n(E, E^0; \mathbb Z)$ gives a
mapping
\[H^i(X,X-A) \cong H^i(E,E^A) \to H^{i+n}(E, E^0\cup E^A)\]
which we claim is an isomorphism.  This follows from the five lemma and the
exact sequence of the triple
\[ E^0 \subset (E^0 \cup E^A) \subset E.\]  In fact, the following diagram
commutes:

\begin{figure}[!h]
{\footnotesize\[ \begin{CD}
H^{i+n}(E, E^0 \cup E^A) @>>> H^{i+n}(E,E^0) @>>>
\begin{matrix} H^{i+n}(E^0 \cup E^A, E^0)&=\\
H^{i+n}(E^A, E^0 \cap E^A)\end{matrix}\\
@AAA @A{\cong}AA @A{\cong}AA \\
H^{i}(E, E^A) @>>> H^i(E) @>>> H^i(E^A) \\
@| @| @| \\
H^{i}(X,X-A) @>>> H^i(X) @>>> H^i(X-A)
\end{CD}\]} 
\caption{}
\end{figure}
\noindent
so the left hand vertical mapping is an isomorphism.  Since $A$ is closed in $Y$
we may excise $Y-N$ from $Y-A$ to obtain an isomorphism
\[ H^{i+n}(Y,Y-A)\cong H^{i+n}(N,N-A) \cong H^{i+n}(E, E^0 \cup E^A).\qedhere\]

\subsection{}
We will also need in \S \ref{sec-nilpotence} the following standard facts
concerning the Thom isomorphism.  Suppose $E_1\to A$ and $E_2 \to A$ are oriented
vector bundles of ranks $d_1$ and $d_2.$  If both are oriented let $G= \mathbb Z$
(and choose orientations of each), otherwise let $G = \mathbb Z/(2)$ be the
coefficient group for homology.  Let $E = E_1 \oplus E_2.$ The diagram of projections
\[
\begin{CD}
  E @>>> E_1 \\
@VVV @VV{\pi_1}V  \\
E_2 @>{\pi_2}>> A
\end{CD} \]
gives identifications $E \cong \pi_1^*(E_2) \cong \pi_2^*(E_1)$
of the total space $E$ as a vector bundle $\pi_1^*(E_2)$
over $E_1$ (resp. as a vector bundle $\pi_2^*(E_1)$ over $E_2$).
The Thom class $\mu_2 \in H^{d_2}(E_2, E_2 - A)$ pulls up to a class
\[
\pi_1^*(\mu_2) \in H^{d_2}(E, E - E_1)\]
and similarly with the indices reversed.  Then the relative cup product
\[
H^{d_1}(E, E-E_1) \times H^{d_2}(E, E-E_2) \to H^{d_1+d_2}(
E, (E-E_1) \cup (E-E_2)) = H^{d_1 + d_2}(E, E-A)\]
takes $(\pi_1^*(\mu_2), \pi_2^*(\mu_1))$ to the Thom class
\[
\mu_E = \pi_1^*(\mu_2) \smile \pi_2^*(\mu_1).\]
Consequently the Thom isomorphism for $E$ is the composition of the Thom
isomorphisms
\begin{diagram}[size=2em]
H^i(A) &\rTo_{\smile \mu_1}& H^{i+d_1}(E_1,E_1-A) & \rTo_{\smile \mu_2}
H^{i+d_1+d_2}(E,E-A).
\end{diagram}

\begin{cor}\label{cor-relative-coGysin}
In the situation of Proposition \ref{prop-Thom}, suppose that $A\subset X\subset Y$
are closed sets, that $A$ has a tubular neighborhood in $X$ with oriented normal
bundle $\Upsilon$ of rank $m,$ and suppose $X$ has a tubular neighborhood in $Y$ with
oriented normal bundle $E$ of rank $n.$ Then
$\mu_{E\oplus\Upsilon}=\pi_{\Upsilon}^*(\mu_E) \smile
\pi_E^*(\mu_{\Upsilon})$ is a Thom class in $H^{n+m}(E\oplus\Upsilon,
E\oplus\Upsilon - A)$ and the composition of Thom isomorphisms across the bottom,
in the following diagram
\begin{diagram}[size=2em]
H^i(X) && \rTo^{\cong} && H^{i+n}(Y,Y-X)\\
\dTo && && \dTo_{\psi}\\
H^i(A) & \rTo^{(\ref{eqn-Thom-cohomology})}_{\smile\mu_{\Upsilon}} &
H^{i+m}(X,X-A) & \rTo^{(\ref{eqn-relative-Thom-cohomology})}_{\smile \mu_E} &
H^{i+n+m}(Y,Y-A)
\end{diagram}
is the Thom isomorphism $\smile (\mu_{E\oplus\Upsilon}).$  The diagram gives
rise to a Gysin homomorphism $\psi:H^r(Y,Y-X) \to H^{r+m}(Y,Y-A)$  which
may be interpreted as the cup product with the Thom class
\[ \mu_{\Upsilon} \in H^m(\Upsilon,\Upsilon-A) \cong
H^m(X,X-A) \cong H^m(E, E- \pi_E^{-1}(A))\]
in the following sequence of homomorphisms
\begin{equation*}
{\small\begin{diagram}[size=2em]
H^{r}(Y,Y-X) && && H^{r+m}(Y,Y-A)\\
\dTo_{\cong} && &&  \uTo^{\cong}\\
H^{r}(E,E-X) &\rTo_{\cup \mu_{\Upsilon}}&
H^{r+m}\left(E, (E-\pi_E^{-1}(A))\cup(E-X)\right) & = &
H^{r+m}(E,E-A)\qedhere
\end{diagram}}\end{equation*}
\end{cor}

\section{Theorems of Morse and Bott} \label{sec-Bott-Morse}
\subsection{} We have the following theorems of Morse and
Bott \cite{Morse}, \cite{Bott2}, \cite{Milnor}, \cite{Bott1}, \cite{Long},
\cite{Klingenberg} Cor. 2.4.11, and \S 3.2, \cite{Rademacher}.
(By Lemma \ref{lem-cech} the homology groups that appear in the following
theorem may be taken to be either \v{C}ech\ or singular.)

\begin{thm}
\label{prop-Morse-lemma}\label{thm-Morse-Bott} Let $X$ be a Riemannian Hilbert
manifold and let $f:X \to\mathbb{R}$ be a smooth function that satisfies
condition C. Let $\Sigma$ be a finite dimensional connected nondegenerate 
critical submanifold in
the strong sense that the eigenvalues of $d^{2}f$ on the normal bundle of
$\Sigma$ are bounded away from 0. Let $\lambda< \infty$ be the index of
$\Sigma$ and let $d < \infty$ be the dimension of $\Sigma.$ Let $c =
f(\Sigma)$ be the critical value. Suppose there is a smooth connected 
manifold $V$ with
\[ \dim(V) = \dim(\Sigma) + \lambda\]
and smooth embeddings \begin{diagram}[size=2em]
\Sigma & \rTo_{\sigma}& V & \rTo_{\rho} X
\end{diagram}
so that $\rho\circ\sigma:\Sigma\to\Sigma$ is the identity and $f \circ\rho(x)
<c$ whenever $x\in V-\Sigma.$ Then $\rho$ induces an isomorphism
\begin{diagram}[height=1.5em]
H_i(V, V-\Sigma;G) &\rTo^{\cong}& H_i(X^{<c}\cup\Sigma,X^{<c}; G)
\end{diagram}
for any coefficient group $G$ and for all $i \ge0.$ In fact, $\rho$ induces
a local diffeomorphism of pairs $(V, V-\Sigma) \cong 
(\Sigma^-, \Sigma^- -\Sigma),$ where $\Sigma^-$ is defined below.

Composing with the Thom isomorphism (\ref{eqn-Thom-homology}) gives a 
further isomorphism 
\[H_i(V,V-\Sigma;G) \cong H_{i-\lambda}(\Sigma;G) \] 
where $G= \mathbb{Z}$ if the normal bundle of $\Sigma$ in $V$ is
orientable, and $G = \mathbb{Z}/(2)$ otherwise. \quash{In particular, the
fundamental class $[V,V-\Sigma] \in H_{d+\lambda}(V,V-\Sigma)$ (with $\mathbb
Z$ coefficients if $V$ is orientable, and with $\mathbb Z/(2)$ coefficients
otherwise) maps to the generator of the top dimensional level homology group
$H_{i+\lambda} (X,X-\Sigma)$ (with $\mathbb Z$ or $\mathbb Z/(2)$ coefficients
accordingly.}
\end{thm}

\subsection{Proof}
This essentially follows from Theorem 7.3 (p.~72) of \cite{Chang} or Corollary
2.4.8 and Proposition 2.4.9 of \cite{Klingenberg}. The tangent bundle
$TX|\Sigma$ decomposes into an orthogonal sum of vector bundles $\Gamma^{+}
\oplus\Gamma^{0} \oplus\Gamma^{-}$ spanned by the positive, null, and negative
eigenvectors (respectively) of the self adjoint operator associated to
$d^{2}f.$ The inclusion $\Sigma\to X$ induces an isomorphism $T\Sigma
\cong\Gamma^{0}$ so we may identify the normal bundle of $\Sigma$ in $X$ with
$\Gamma^{+} \oplus\Gamma^{-}.$

For $\epsilon$ sufficiently small the restriction of the exponential map
\[
\exp: (\Gamma^{+} \oplus\Gamma^{-})_{\epsilon} \to X
\]
is a homeomorphism onto some neighborhood $U\subset X.$ Let $\Sigma^{-} =
\exp(\Gamma^{-}_{\epsilon})\subset X.$ This submanifold is often described as
``the unstable manifold that hangs down from $\Sigma,$'' for if $\epsilon$ is
sufficiently small and if $0 \ne a \in\Gamma^{-}_{\epsilon}$ then
$f(\exp(a))<c.$ Its tangent bundle, when restricted to $\Sigma,$ is
\[
T\Sigma^{-}|\Sigma= \Gamma^{0} \oplus\Gamma^{-}.
\]

The projection $\Gamma^{+} \oplus\Gamma^{-} \to\Gamma^{-}$ induces a
projection $\pi:U \to\Sigma^{-}$ which is homotopic to the identity by the
homotopy
\[
\pi_{t}(\exp(a \oplus b)) = \exp(ta \oplus b)
\]
where $t\in[0,1]$, $a \in\Gamma^{+}_{\epsilon}$, $b \in\Gamma^{-}_{\epsilon},$
and where $\pi_{1}$ is the identity and $\pi_{0} = \pi.$ Moreover the kernel
of the differential $d\pi(x):T_{x}X \to T_{x}\Sigma^{-}$ at any point $x
\in\Sigma$ is precisely the positive eigenspace, $\Gamma^{+}_{x}\subset
T_{x}X.$ Let us identify the manifold $V$ with its image $\rho(V)\subset X$ so
that $TV|\Sigma\subset\Gamma^{0} \oplus\Gamma^{-}.$ It follows that the
restriction of $\pi$
\begin{equation}
\label{eqn-pi-V}\pi: V \cap U \to\Sigma^{-}%
\end{equation}
has nonvanishing differential at every point $x \in\Sigma\subset V$ and
consequently the mapping (\ref{eqn-pi-V}) is a diffeomorphism in some
neighborhood of $\Sigma.$ It follows that $\pi$ induces an isomorphism
\begin{equation}
\label{eqn-pistar}\pi_{*}:H_{i}(V,V-\Sigma) \to H_{i}(\Sigma^{-}, \Sigma^{-} -
\Sigma) \cong H_{i}( D\Gamma^{-},\partial D\Gamma^{-})
\end{equation}
where $D\Gamma^{-}$ denotes a sufficiently small disk bundle in $\Gamma^{-}$
and $\partial D\Gamma^{-}$ is its boundary. On the other hand, by Morse theory
(the above mentioned Theorem 7.3 of \cite{Chang} or Proposition 2.4.9 of
\cite{Klingenberg}), the space $U^{\le c +\delta}$ has the homotopy type of
the adjunction space $U^{\le c-\delta} \cup_{\partial D\Gamma^{-}}D\Gamma
^{-}.$ This gives the standard isomorphism of Morse theory,
\begin{equation}
\label{eqn-Morse-iso}H_{i}(D\Gamma^{-}, \partial D\Gamma^{-})\cong
H_{i}(U^{\le c+\delta}, U^{\le c-\delta}).
\end{equation}
All these isomorphisms fit together in a commutative diagram:
{\footnotesize \begin{diagram}
H_i(U^{\le c},U^{<c}) & \rTo^{\pi_*} & H_i(\Sigma^-, \Sigma^- -\Sigma) & \rTo^{i_*} &
H_i(U^{\le c}, U^{<c}) & \rTo^{(\ref{prop-level})}\ & H_i(U^{<c} \cup \Sigma, U^{<c}) \\
\uTo^{\rho_*} &\ruTo^{(\ref{eqn-pistar})}& \uTo &\ruTo& \dTo_{(\ref{prop-level})}\\
H_i(V,V-\Sigma) & \rTo_{(\ref{eqn-pistar})}& H_i(D\Gamma^-,\partial D\Gamma^-)
& \rTo_{(\ref{eqn-Morse-iso})} & H_i(U^{\le c+\delta}, U^{\le c-\delta})
\end{diagram}}
Each of the arrows labeled by an equation number is an isomorphism, so $i_{*}$
is an isomorphism. But $i_{*}\pi_{*}$ is the identity, so $\pi_{*}$ is also an
isomorphism, hence also $\rho_{*}.$ \qed

\section{Proof of Theorem \ref{thm-X}}\label{sec-proof-of-theoremX}
\subsection{}  The proof of Theorem \ref{thm-X} involves a second construction of the
cohomology bracket, along the same lines as the definition of the $\circledast$
product.  As in \S \ref{sec-freeloop} let $\L$ be the free loop space of 
mappings $x:\mathbb R/\mathbb Z \to M$ (or $x:[0,1] \to M$).
For the purposes of this appendix only, let $\widehat{\L}$ be the free
loop space of $H^1$ mappings $\mathbb R/2\mathbb Z \to M$ (or $[0,2] \to M$).  
If $x,y\in \L,$ if $s \in \mathbb R/\mathbb Z$ (or $s \in [0,1]$) and if 
$x(0)=y(s),$ define $x\cdot_sy \in \widehat{\L}$ (see Figure \ref{fig-join}) by
\begin{equation*}
x\cdot_sy(t) = \begin{cases}
y(t) &\text{if }\ 0 \le t \le s \\
x(t-s) &\text{if }\ s \le t \le 1+s\\
y(t)=y(t-1)&\text{if }\ 1+s \le t \le 2
\end{cases}. \end{equation*}

\begin{center}\begin{figure}[!h]
\setlength{\unitlength}{1.7pt}
\begin{picture}(100,90)
\put(40,40){\circle{56.568}}
\put(70,70){\circle{28.284}}
\put(40,11.716){\circle*{4}}
\put(60,60){\circle*{4}}
\put(40,3){\makebox(0,0){$y(0)$}}
\put(8,40){\makebox(0,0){$y$}} 
\put(70,87){\makebox(0,0){$x$}}
\put(53,53){\makebox(0,0){$y(s)$}}
\put(65,65){\makebox(0,0){$x(0)$}}
\put(20,20){\vector(-1,1){0}}\put(20,60){{\vector(1,1){0}}}
\put(60,20){\vector(-1,-1){0}}
\put(60,80){\vector(1,1){0}}
\put(80,80){\vector(1,-1){0}} \put(80,60){\vector(-1,-1){0}}
\end{picture}\caption{Joining two loops at time $s$}
\label{fig-join}
\end{figure}
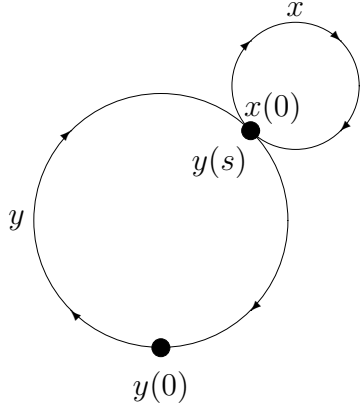\end{center}
\setlength{\unitlength}{1pt}

Define $\LL$ to be the set of triples $(x,y,s) \in
\L \times\L \times \RZ$ such that
\[
\begin{cases}
x(0) = y(s) &\text{if }\ 0 \le s \le 1 \\
y(0)= x(s) &\text{if }\ 1 \le s \le 2 \end{cases}\]
Define $\Phi_1:\LL\to\widehat{\L}$ by
$\Phi_1(x,y,s) =
\begin{cases} x\cdot_s y &\text{if }\ 0 \le s \le 1 \\
y\cdot_{(s-1)} x &\text{if } 1 \le s \le 2. \end{cases}$

\noindent
We have embeddings
\begin{equation}\label{eqn-boxed-bracket}\boxed{
\begin{diagram}[size=2em]
\L \times \L \times \mathbb R/(2\mathbb Z) & \lTo^{h} & \LL & \rTo^{\Phi} &
\widehat{\L} \times \RZ
\end{diagram}}\end{equation}
\noindent
where $\Phi(x,y,s) = \left(\Phi_1(x,y,s),s\right).$  The images $\Phi(\LL)$
and $h(\LL)$ have normal bundles and tubular neighborhoods and in fact
they are given by the pull-back of the diagonal $\Delta$ under the mappings
\begin{diagram}[size=2em]
\L \times \L \times \RZ & \rTo^{\alpha} & M \times M & \lTo^{\beta} & \widehat{\L} \times \RZ\\
\uTo && \uTo && \uTo\\
h\left(\LL\right) & \rTo & \Delta & \lTo & \Phi\left( \LL \right)\\
\end{diagram}
where 
\[\alpha(x,y,s) = \begin{cases} (x(0),y(s)) &\text{if } 0 \le s \le 1 \\
(x(s),y(0)) &\text{if } 1 \le s \le 2 \end{cases}\]
and $\beta(w,s)=\left(w(s),w(s+1)\right).$  
(Each half of $\LL$ has a smooth tubular
neighborhood and normal bundle in $\L\times\L\times\RZ$ but there is a ``kink'' 
where the two halves are joined so we only obtain a topological tubular
neighborhood and normal bundle of $h(\LL).$)  In particular,
\begin{equation}\label{eqn-phi-LL}
\Phi\left(\LL\right) = 
\left\{(w,s)\in \widehat{\L} \times \RZ:\ w(s) = w(s\pm1)\right\} \end{equation}

\subsection{}\label{subsec-geometric-bracket}
We claim that the bracket $\left\{ x, y \right\}
\in H_{\deg(x)+\deg(y)-n+1}(\L)$ is obtained by passing from left to right
in (\ref{eqn-boxed-bracket}), i.~e., it is the image of 
$x\times y\times [\RZ]$ under the composition
\begin{diagram}[size=2em]
H_i(\L) \times H_j(\L)\times H_1(\RZ) & \rTo^{\epsilon\times}& 
H_{i+j+1}(\L \times \L \times \RZ)\\
&& \dTo^{h^!}_{(\ref{eqn-Gysin-homology})}\\
H_{i+j-n+1}(\widehat{\L})&\lTo^{\Phi_1}&H_{i+j+1-n}(\LL)\\
\end{diagram}
where $\deg(x)=i$, $\deg(y)=j$,  $\epsilon = (-1)^{n(n-j-1)}$, and
$[\RZ]\in H_1(\RZ)$ denotes the orientation class. First we show this
agrees with the definition of $\left\{x,y\right\}$ in \cite{CS}.

The projection $\pi: \LL \to \RZ$ 
is locally trivial and $\pi^{-1}(0) \cong \pi^{-1}(1) \cong \F$ is the figure eight
space.  Let $\LL_{[0,1]}=\pi^{-1}([0,1])$ and $\partial 
\LL_{[0,1]} = \pi^{-1}\left(\left\{0\right\} \cup \left\{1\right\} \right).$
Then the bracket product in \cite{CS} is a sum of two terms,
\[\left\{x,y\right\} = x \scs y - (-1)^{(i-n+1)(j-n+1)}y\scs x\] 
(but \cite{CS} use a $*$ rather than a $\scs$) which may be identified as the 
two images of 
\[h^!(\epsilon x\times y\times[\RZ])\in H_{i+j-n+1}(\LL)\] in
\[H_{i+j-n+1}\left(\Phi(\LL_{[0,1]}),\partial\Phi(\LL_{[0,1]})\right)\ \text{and}\ 
H_{i+j-n+1}\left(\Phi(\LL_{[1,2]}),\partial\Phi(\LL_{[1,2]})\right)\] 
respectively.  The projection to $\widehat{\L}$ adds these together (with the
appropriate sign).

The proof that the construction of \S \ref{subsec-geometric-bracket}
agrees with (\ref{eqn-CS-bracket}) is essentially
the same as the proof of Corollary 5.3 in \cite{CS}.  Using this fact, the proof of
(1),(2),(3) in \S \ref{subsec-CS-bracket} is then the same as in \cite{CS}
\S 4.

\subsection{} In this paragraph we define the reparametrization function 
\[\widehat{J}:\widehat{\L} \times \RZ \times [0,2] \to \widehat{\L} \times \RZ\] that is analogous
to the function $J$ of \S  \ref{subsec-first-cohomology-product}.  First some
notation.  For $r\in [0,2]$ let $\widehat\theta_{1\to r}:[0,2] \to [0,2]$ be
the piecewise linear function taking $0 \mapsto 0$, $1\mapsto r$, and $2 \mapsto 2.$
It is just the function $\theta$ of \S \ref{subsec-first-cohomology-product},
but the domain and range have been stretched to $[0,2].$ For any real number $s$
define the translation $\chi_s:\RZ \to RZ$ by $\chi_s(t) = t+s.$  Define
\begin{equation}\label{eqn-hat-J}
\widehat{J}(w,s,r) = \left(w\circ \chi_s \circ \theta_{1\to r} \circ \chi_{-s},
s\right)\end{equation}
and set $\widehat J_r(w,s) = \widehat{J}(w,s,r).$
In analogy with our notation for $\F$ in \S 
\ref{subsec-first-cohomology-product} let $\LL^{>0,>0}$ be the set of 
$(x,y,s)\in \LL$ such that $F(x)>0$ and $F(y)>0.$  Let $\widehat{\L}_0 =
\widehat{\L}^{=0}$ denote the constant loops in $\widehat{\L}.$  We claim that
$\widehat{J}$ takes both of the following sets
\begin{equation}\label{eqn-hatJ-set}
  \widehat{\L}_0 \times \RZ \times [0,2]\ \text{ and }\ \widehat{\L} \times \RZ \times \partial[0,2]
\end{equation}
into the set $\widehat{\L} \times \RZ - \Phi\left(\LL^{>0,>0}\right).$

This is obvious for the first of these sets, while the verification for the second set
involves four cases:  $s \le 1$ or $s\ge 1$ vs. $r=0$ or $r=2.$  In each case the
function $\chi_s\circ\widehat{\theta}_{1\to r} \circ \chi_{-s}$ is constant, with value
$s$, on the interval $[s,s+1] \mod 2.$ Therefore $J(w,s,r) = (\gamma,s)$ where
$\gamma = x\cdot_sy$ ($s \le 1$) or $\gamma =x \cdot_{(s-1)}y$ ($s \ge 1$)
and either $x$ or $y$ is a constant loop.

\subsection{}  \label{subsec-geometric-cobracket}
The geometric construction of the cohomology bracket is  the following composition,
\begin{center}
{\footnotesize\begin{diagram}[size=2em]
&H^i(\L,\L_0)\times H^j(\L,\L_0)\times H^0(\RZ) &\rTo^{\epsilon\times}&
H^{i+j}\left((\L,\L_0)\times(\L,\L_0)\times \RZ\right)\\
&&&\dTo^{h^*}&\\
&&&H^{i+j}\left( \LL, \LL - \LL^{>0,>0}\right) \\
&&& \dTo^{\Phi_{!}}_{(\ref{eqn-relative-Thom-cohomology})}& \\
&&& H^{i+j+n}\left(\widehat{\L}\times \RZ, \widehat{\L}\times\RZ - \Phi\left(\LL^{>0,>0}\right)\right) \\
&&&\dTo^{\widehat{J}^*}&\\
&H^{i+j+n-2}\left(\widehat{\L},\widehat{\L}_0)\right) &\lTo^{\pi_{!}}&
H^{i+j+n}\left((\widehat{\L},\widehat{\L}_0) \times \RZ \times \left([0,2], \partial[0,2]\right) \right)
\end{diagram}}
\end{center}
where $\pi$ denotes the projection to $\widehat{\L}.$  So the bracket is obtained by passing
from left to right in the following diagram,
\begin{equation*}\boxed{
\begin{diagram}[size=2em]
\L\times\L \times \RZ & \lTo^h& \LL &\rTo^{\Phi}& \widehat{\L}\times \RZ 
&\lTo^{\widehat{J}}& \widehat{\L}\times\RZ\times [0,2] &\rTo& \widehat{\L}
\end{diagram}
}\end{equation*}

\quash{
\begin{equation*}\boxed{
\begin{diagram}[size=2em]
(\L,\L_0)\times(\L,\L_0)\times \RZ &\lTo_h&
\left( \LL, \LL- \LL^{>0,>0}\right) \\
&&\dTo^{\Phi}&\\
\left(\widehat{\L},\widehat{\L}_0\right)\times \RZ \times \left([0,2], \partial[0,2]\right)
 &\rTo^{\widehat{J}}&
(\widehat{\L}, \widehat{\L}- \LL^{>0,>0})\times \RZ 
\end{diagram}
}\end{equation*}
}

\subsection{}  For $r \in [0,2]$ set $\widehat{J}_r(w,s) = \widehat{J}(w,s,r).$
Let $\mathcal T: \LL \to \LL$ by $T(x,y,s) = (y,x,s+1)$ and (by
abuse of notation) set $\mathcal T:\widehat{\L}\times\RZ 
\to \widehat{\L}\times \RZ$ by 
$\mathcal T(z,s) = (z\circ\chi_1,s+1).$ (So
$\mathcal T$ moves the basepoint half way around the loop.) Then the following
diagram commutes:
\begin{diagram}[size=2em]
\LL &\rTo^{\Phi}& \widehat{\L} \times \RZ &\rTo^{\widehat{J_r}}& \widehat{\L} \times \RZ \\
\dTo_{\mathcal T} && \dTo_{\mathcal T} && \dTo_{\mathcal T} \\
\LL &\rTo^{\Phi} & \widehat{\L} \times \RZ &\rTo^{\widehat{J_r}}& \widehat{\L} \times \RZ.
\end{diagram}
 As in \S \ref{subsec-geometric-bracket} the bracket is a sum of two terms,
\[ \left\{x,y\right\} = x \clubsuit y -(-1)^{(|x|+1)(|y|+1)} y \clubsuit x\]
which are interchanged by the involution $\mathcal T.$  The proof that the 
construction in \S \ref{subsec-geometric-cobracket} agrees
with the formula (\ref{eqn-cohomology-bracket}) is essentially the same as the proof of
Corollary 5.3 in \cite{CS}.  The proof of (A), (B), (C) in Theorem \ref{thm-X}
\ref{subsec-CS-bracket} is then similar to the argument in \cite{CS} \S 4.

\section{Associativity of $\circledast$}\label{sec-associative}
\subsection{}
In this section we complete the proof of Proposition
\ref{prop-cohomology-commutative}.
The following statements refer to the diagram ``Associativity of $\circledast$".  
We omit the parallel diagram that is obtained by taking cohomology of each
of the spaces and pairs that appear in this diagram.
Each mapping denoted $\tau$ denotes an inclusion with normal bundle.
The corresponding homomorphism in the cohomology diagram is the Thom
isomorphism.  In the cohomology diagram, the squares involving arrows
denoted $\tau$ commute because the relevant normal bundles pull back.

\newcommand{\third}{\frac{1}{3}}
\newcommand{\twoth}{\frac{2}{3}}
\newcommand{\id}{{\sf id}}
\newcommand{\gogo}{>0,>0} \newcommand{\gogogo}{>0,>0,\ge 0}
\newcommand{\wow}[2]{{\begin{subarray}{l} \frac{1}{3}\to {#1}\\
\frac{2}{3} \to {#2}\end{subarray}}}

\begin{sidewaysfigure}[!p]
{\scriptsize
\begin{diagram}[size=2em,PostScript=dvips]
(\L,\L_0)\times(\L,\L_0)\times(\L,\L_0) & \lTo_{\id \times \Delta} & (\L,\L_0)\times
(\F_{\twoth},\F_{\twoth}-\F_{\twoth}^{\gogo}) & \rTo_{\id \times\tau} &
(\L,\L_0)\times(\L,\L-\F_{\twoth}^{\gogo}) &\lTo_{\id \times J_2}& (\L,\L_0)\times(\L,\L_0)
\times (I_2,\partial I_2) \\
\uTo_{\Delta\times\id} &\boxed{1}& \uTo && \uTo && \uTo_{\Delta\times\id} \\
(\F_{\third},\F_{\third}-\F_{\third}^{\gogo})\times(\L,\L_0) &\lTo&
(\mathcal C,\mathcal C - \mathcal C^{\gogogo})
&\rTo_{\tau}& (\F_{\third},\F_{\third}-\mathcal C^{\gogogo}) & \lTo_{J_2}&
(\F_{\third}, \F_{\third} - \F_{\third}^{\gogo}) \times (I_2, \partial I_2)\\
\dTo_{\tau\times\id} && \dTo_{\tau} && \dTo_{\tau} && \dTo_{\tau\times\id}\\
(\L,\L-\F_{\third}^{\gogo})\times (\L,\L_0) &\lTo& (\F_{\twoth},\F_{\twoth}-
\mathcal C^{\gogogo})
&\rTo_{\tau}& (\L,\L-\mathcal C^{\gogogo}) &\lTo_{J_2}& (\L,\L-\F_{\third}^{\gogo}) \times (I_2,
\partial I_2) \\
\uTo_{J_1\times\id} && \uTo_{J_1} && \uTo_{J_1} &\boxed{2}&
\uTo_{\widehat{J}_1\times\id} \\
 (I_1, \partial I_1)\times(\L,\L_0) \times (\L,\L_0)  & \lTo &
(I_1, \partial I_1)\times (\F_{\twoth},\F_{\twoth}-\F^{\gogo})  &\rTo_{\tau}&
(I_1,\partial I_1)\times (\L,\L-\F_{\twoth}^{\gogo}) &\lTo_{\id\times\widehat{J}_2}&
(I_1, \partial I_1)\times(\L,\L_0)   \times (I_2, \partial I_2)
\end{diagram}}
\caption{Associativity of $\circledast$ product}
\end{sidewaysfigure}

Starting with $x\times y \times z$ in the upper left corner, the product
$x\circledast(y\circledast z)$ is obtained by going across the top row then
down the right side of the diagram, while the product $(x\circledast y)
\circledast z$ is obtained by going down the left side of the diagram
and then along the bottom row.  Here, the symbol $\F_{\third}$ denotes
$\phi_{\third}(\F))$ and the space $\mathcal C$ denotes the space of
(three-leaf) clovers, that is, the pre-image of the (small) diagonal
under the mapping
\[(\ev_0,\ev_{\third},\ev_{\twoth}):\L \to M\times M \times M.\]
It has a normal bundle in $\L$ that is isomorphic
to $TM \oplus TM.$  The symbol $\mathcal C^{\gogogo}$ denotes those loops
consisting of three composable loops $\alpha\cdot\beta\cdot\gamma,$
with $\alpha,\beta$ glued at time $1/3$ and with $\beta,\gamma$ glued at
time $2/3,$ such that two (or more) of these ``leaves'' have positive energy
(that is, one or fewer of these loops is constant). Hence $\mathcal C -
\mathcal C^{\gogogo}$ consists
of clovers such that two or more of the leaves are constant.  The square marked
$\boxed{1}$ is Cartesian:  the lower right corner is the intersection of
the upper right and lower left corners.  The symbol $\Delta$ denotes a
diagonal mapping and $\id$ denotes an identity mapping.

\subsection{}
Using the obvious extension of the notation for $\theta_{\half\to s}:I \to I$
(with $\theta(0)=0$ and $\theta(1)=1$), the mappings $J_i$ and $\widehat{j}_i$ are
(re)defined by
\begin{align*}
J_1(s,\gamma) &= \gamma \circ \theta_{\wow{\frac{2}{3}s}{\frac{2}{3}}}
&J_2(\gamma,t) &= \gamma\circ \theta_{\wow{\third}{\third+\twoth t}} \\
\widehat{J}_1(s,\gamma) &= \gamma \circ \theta_{\third \to s}
&\widehat{J}_2(\gamma,t) &= \gamma \circ \theta_{\twoth \to t}
\end{align*}
so that
\begin{align}
J_1 \circ (\id\times\widehat{J}_2)(s,\gamma,t) &= \gamma\circ \theta_{\twoth \to t}
\circ\theta_{\wow{\twoth s}{\twoth\phantom{\third+\twoth t}}} =
\gamma \circ \theta_{\wow{st}{t}}\label{eqn-theta1}\\
J_2 \circ (\widehat{J}_1\times\id)(\gamma,s,t) &= \gamma\circ\theta_{\third\to s}
\circ \theta_{\wow{\third}{\third+\twoth t}} =
\gamma \circ\theta_{\wow{s}{s+(1-s)t}}\label{eqn-theta2}
\end{align}

We need to prove that the corresponding cohomology diagram commutes.  The only
part that is not obvious is the square designated $\boxed{2}$ in the diagram.
This square commutes up to (relative) homotopy for the following reason.  Let
$\mathfrak m$ denote the set of continuous non-decreasing mappings
$\theta:[0,1] \to [0,1]$ such that $\theta(0)=0$, $\theta(1)=1$, and
$\theta$ is linear on $[0,{\third}]$, on $[\third,\twoth]$, and on $[\twoth, 1].$
For $i = 1,2,3$ let $\mathfrak m_i$ denote the collection of all $\theta\in
\mathfrak m$ such that $\theta$ is constant on $\left[\frac{i-1}{3}, \frac{i}{3}
\right].$  The functions $\theta_{etc.}$ appearing on the right side of
(\ref{eqn-theta1}) and (\ref{eqn-theta2}) may be interpreted as continuous
mappings $(s,t)\in I_1 \times I_2 \to \mathfrak m$  with the following boundary behavior:
$\{0\} \times I_2 \to \mathfrak m_1,$ $\{1\} \times I_2 \to \mathfrak m_2;$
$I_1 \times \{0\} \to \mathfrak m_2;$ and $ I_1 \times \{1\} \to
\mathfrak m_3.$  This boundary behavior is indicated in Figure \ref{fig-boundary}.

\begin{figure}[!h]\begin{center}
\begin{picture}(120,115)(0,0)
\put(0,20){\vector(1,0){100}} \put(105,15){$s$}
\put(20,0){\vector(0,1){100}} \put(15,105){$t$}
\put(20,70){\line(1,0){50}}
\put(70,20){\line(0,1){50}}
\put(40,10){$\mathfrak m_2$}
\put(0,40){$\mathfrak m_1$}
\put(75,40){$\mathfrak m_2$}
\put(40,75){$\mathfrak m_3$}
\end{picture}\end{center}\caption{Boundary behavior}
\label{fig-boundary}
\end{figure}
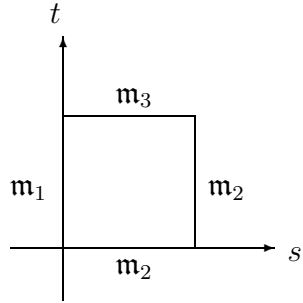

But the collection of such maps $I_1 \times I_2 \to \mathfrak m$ is
convex, so the mappings (\ref{eqn-theta1}) and (\ref{eqn-theta2}) are homotopic.
This completes the proof that the $\circledast$ product is associative.\qedhere
\end{proof}

\end{document}